\documentclass[11pt, a4paper]{article}
 \usepackage{amsmath}
 \usepackage{amssymb}
\usepackage[margin=21mm]{geometry}
\usepackage[utf8]{inputenc}
\usepackage{graphics}
\usepackage{xspace}
\usepackage{color}
 \usepackage{amsthm}
 \usepackage[old]{old-arrows}
  \usepackage{amsfonts}
\usepackage[dvipsnames]{xcolor}
\usepackage{mathtools}
\usepackage{amssymb}
\usepackage{latexsym}
\newcommand{\R}{{\mathbb R}}

\usepackage[mathscr]{euscript}
\usepackage{mathrsfs}
 \usepackage{enumerate}
 \usepackage{cite}
 \usepackage{tikz}
 \usepackage[pagebackref]{hyperref}
 \usepackage{hyperref}
\newcommand{\eps}{\varepsilon}
\catcode`@=12

\makeatother
\usepackage[english]{babel}

 \usepackage[hyperpageref]{backref}
\newtheorem{theorem}{Theorem}
\newtheorem{remark}[theorem]{Remark}
\newtheorem{lemma}[theorem]{Lemma}
\newtheorem{proposition}[theorem]{Proposition}

\DeclareMathOperator*{\supp}{\text{supp}}

\numberwithin{theorem}{section}
\numberwithin{equation}{section}
\renewcommand{\theequation}{\arabic{section}.\arabic{equation}}
\renewcommand{\thetheorem}{\arabic{section}.\arabic{theorem}}

\title{On the classification of solutions to the Logarithmic Laplacian critical Choquard equation}
\date{}
\author{Rakesh Arora$^{1}$\footnote{ R. Arora, \textit{E-mail address:}
  \texttt{rakesh.mat@iitbhu.ac.in}\thanks{Corresponding author}} \ and Jacques Giacomoni$^{2} \footnote{J.~Giacomoni, \textit{E-mail address:} \texttt{jacques.giacomoni@univ-pau.fr}}$, \\
       \small $^{1}$ Department of Mathematical Sciences, Indian Institute of Technology (IIT-BHU) Varanasi, \\ \small Uttar Pradesh 221005, India\\
\small $^{2}$ LMAP, UMR E2S-UPPA CNRS 5142, Ba\^timent IPRA, Avenue de l’Universit\'e F-64013 Pau, France \\}

\providecommand{\keywords}[1]
{
  \small	
  \textbf{\textit{Keywords---}} #1
}
\begin{document}
\maketitle \vspace{-1.8\baselineskip}
\begin{abstract}
In this work, we establish a sharp logarithmic Choquard inequality by combining Beckner’s entropy inequality with the sharp Pitt's type inequality. Motivated by this estimate, we classify the positive classical solutions, in a suitable integrability class, of the following critical
logarithmic Choquard problem
\begin{equation*}\label{main-Choquard-problem}
    \begin{split}
 \mathcal{L}_\Delta u & = \sigma u + \frac{1}{\|u\|_2^2} \left(G_{\ln}(u) - \frac{4}{N}\left[ \int_{\mathbb{R}^N} u^2 \ln u ~dx \right]\right) u \qquad \text{in} \ \mathbb{R}^N
\end{split}
\end{equation*}
where $\sigma\in \mathbb{R}$, $N \geq 1$, $\mathcal{L}_\Delta$ in the Logarithmic Laplacian and $G_{\ln}(u) =  \ln \left(\frac{1}{|x|^4}\right) \ast u^2.$ We construct
explicit solutions as limits of critical fractional Choquard bubbles and prove that every positive classical solution in the prescribed class has the form
\[
u_{\sigma,t}(x) = e^{\frac{N}{4}\left(\sigma - B_{N, \mathcal{L}}\right)} B_{N,0} \left(\frac{t}{t^2 + |x-x_0|^2}\right)^\frac{N}{2} \ \text{for any} \ t \in \mathbb{R}^+, \ x_0 \in \mathbb{R}^N,
\]
where $B_{N,0}= \left(\frac{\Gamma(N)}{\pi^{\frac{N}{2}}\Gamma\left(\frac{N}{2}\right)}\right)^\frac{1}{2}$ and 
$
B_{N,\mathcal{L}} = \left[2\ln 2 + 4 \psi\left(\frac{N}{2}\right) - 2\psi(N) + \frac{4}{N} \ln (B_{N,0})\right].$
\end{abstract}

\keywords {Logarithmic Laplacian, Choquard Equation, Radial Symmetry, Monotonicity, Uniqueness.}\\

\textbf{Mathematics Subject Classification:} 35J61, 35B06.
\section{Introduction}
Sharp functional inequalities provide a natural starting point for the study of critical elliptic equations: they identify the balance between diffusion and nonlinear interaction and single out distinguished families of solutions. At logarithmic order, power-law expressions are replaced by entropy terms, and the usual homogeneity gives way to additive logarithmic scaling laws (see \cite{Beckner-1995, Beckner-1993, Beckner-1999, Carlen-Loss-1992, Frank-Lieb-2010, Frank-Konig-Tang-2020}). In this work, we establish a sharp inequality coupling logarithmic interaction, entropy, and the quadratic form of the logarithmic Laplacian. Motivated by this inequality, we study the existence, symmetry, and complete classification of positive classical solutions to the critical logarithmic Choquard equation driven by the logarithmic Laplacian,
\begin{equation}\label{main-problem}
\mathcal{L}_\Delta u = \sigma u + \frac{1}{\|u\|_2^2} \left(G_{\ln}(u) - \frac{4}{N}\left[ \int_{\mathbb{R}^N} u^2 \ln u ~dx \right] \right) u \quad \text{in } \mathbb{R}^N
\end{equation}
where $\sigma \in \mathbb{R}$ and $\mathcal{L}_\Delta$ denotes the logarithmic Laplacian in $\mathbb{R}^N$, $N \in \mathbb{N}$, introduced by Chen and Weth \cite{Chen-Weth_2019} and defined as
\begin{equation*}
\mathcal{L}_\Delta u(x) = c_N \int_{\mathbb{R}^N} \frac{u(x)\mathbf{1}_{B_1(x)}(y) - u(y)}{|x-y|^N}\, dy + \rho_N u(x)
\end{equation*}
where
\begin{equation*}
c_N := \pi^{-N/2}\Gamma\!\left(\frac{N}{2}\right) = \frac{2}{\omega_{N-1}}, 
\qquad 
\rho_N := 2\ln 2 + \psi\!\left(\frac{N}{2}\right) - \gamma.
\end{equation*}
Here $\omega_{N-1} := |\mathbb{S}^{N-1}|$ denotes the surface measure of the unit sphere in $\mathbb{R}^N$, $\gamma = -\Gamma'(1)$ is the Euler--Mascheroni constant, and $\psi = \Gamma'/\Gamma$ is the digamma function associated with the Gamma function $\Gamma$.

Over the last few decades, semilinear equations driven by integro-differential operators have attracted considerable attention due to the increasing recognition of nonlocal phenomena and their broad spectrum of applications across mathematics, physics, biology, and other scientific disciplines. This growing interest has stimulated substantial advances in the analysis of nonlocal equations from both analytical and probabilistic perspectives; see, for instance, \cite{Antil-Bartels-2017, Jarohs-Saldana-Weth-2024, Pellacci-Verzini-2018, Sprekels-Valdinoci-2017}. Among the various classes of nonlocal operators, fractional powers of the Laplacian occupy a central position owing to rich mathematical structure, remarkable qualitative properties, and their fundamental role in modeling long-range interactions and anomalous diffusion.

For $s\in(0,1)$, the fractional Laplacian $(-\Delta)^s$ admits a representation as a singular integral operator defined in the principal value sense,
\[
(-\Delta)^s u(x)=c_{N,s}\lim_{\varepsilon\to0^+}
\int_{\mathbb{R}^N\setminus B_{\varepsilon}(x)}
\frac{u(x)-u(y)}{|x-y|^{N+2s}}\,dy, \quad c_{N,s}=2^{2s}\pi^{-N/2}s
\frac{\Gamma\left(\frac{N+2s}{2}\right)}{\Gamma(1-s)}>0
\]
where $c_{N,s}$ is a normalization constant chosen so that, for any function $u\in C_c^{\infty}(\mathbb{R}^N)$,
\[
\mathcal{F}((-\Delta)^s u)(\xi)=|\xi|^{2s}\widehat{u}(\xi)
\quad \text{for all }\xi\in\mathbb{R}^N.
\]
It is well known that the fractional Laplacian approaches classical operators as the order varies. In particular,
\[
\lim_{s\to1^-}(-\Delta)^s u(x)=-\Delta u(x),
\qquad
\lim_{s\to0^+}(-\Delta)^s u(x)=u(x),
\]
for $u\in C_c^2(\mathbb{R}^N)$; see, for example, \cite{DiNezza-Palatucci-Valdinoci_2022}. A further remarkable observation, established in \cite{Chen-Weth_2019}, concerns the expansion of the fractional Laplacian at $s=0$. More precisely, they showed that for $u\in C^\beta_c(\mathbb{R}^N)$ with some $\beta>0$
\begin{equation}\label{main:operator}
    \mathcal{L}_\Delta:=\frac{d}{ds}\Big|_{s=0}(-\Delta)^s \quad \text{and} \quad  \text{$\mathcal{F}(\mathcal{L}_\Delta u)(\xi)=(2\ln|\xi|)\widehat{u}(\xi)\quad \text{for a.e. }\xi\in\mathbb{R}^N$}
\end{equation}
where $\mathcal{F}$ denotes the Fourier transform of $u$ given by $$\mathcal{F}(u)(\xi):= \frac{1}{(2\pi)^{\frac{N}{2}}} \int_{\mathbb{R}^N} e^{- \iota x \cdot \xi} u(x) ~dx, \quad \xi \in \mathbb{R}^N.$$  Since its introduction, this operator has attracted considerable attention, and its associated linear and semilinear theory. We refer ther reader to \cite{Chen-Weth_2019, Arora-Mukherjee-2026, Arora-Mukherjee-Vaishnavi-2026, Chen-Veron-2023, Feulefack-Jarohs-Weth-2022, Jarohs-Weth-2019, Laptev-Weth-2021} (for eigenvalue problems and maximum principle), \cite{Santamaria-Saldana-2022, Arora-Giacomoni-Vaishnavi-2025} (for sharp Logarithmic-Sobolev inequality and  embeddings), \cite{Angeles-Saldana-2023, Santamaria-Saldana-2022} (for the asymptotics of fractional Laplacian problems), \cite{Arora-Giacomoni-Hajaiej-Vaishnavi-2026, Arora-Hajaiej-Perera-2025, Dyda-Jarohs-Sk-2025} (for Logarithmic $p$-Laplacian problems), \cite{Chang_Lara-Saldana-2022, Feulefack-Jarohs-2023,  Santamaria-Rios-Saldana-2024} (for regularity results), \cite{Chen-Hauer-Weth-2023} (for Caffarelli-Silvestre extension type problem). 

A parallel object on the round sphere $(\mathbb{S}^n,g)$ is the conformal fractional Laplacian $\mathscr{P}_g^s$, whose derivative at $s=0$ yields the conformal logarithmic Laplacian $\mathscr{P}_g^{\ln} := \partial_s \mathscr{P}_g^s|_{s=0}$. On $\mathbb{S}^n$, Frank, K\"onig, and Tang \cite{Frank-Konig-Tang-2020} classified the nonnegative solutions of the equation
\begin{equation*}
\int_{\mathbb{S}^n} \frac{u(\xi)-u(\eta)}{|\xi-\eta|^n}\,d\eta = E_n\, u(\xi)\ln u(\xi), 
\qquad \xi \in \mathbb{S}^n, 
\qquad E_n := \frac{4}{n}\frac{\pi^{n/2}}{\Gamma(n/2)},
\end{equation*}
which arises as the Euler-Lagrange equation of a conformal logarithmic Sobolev inequality, that is  
\begin{equation*}
\int\int_{\mathbb{S}^n\times\mathbb{S}^n}\frac{|v(\omega)-v(\eta)|^2}{|\omega-\eta|^n}\mathrm{d}\omega\mathrm{d}\eta\geq C_n\int_{\mathbb{S}^n}|v(\omega)|^2\ln\frac{|v(\omega)|^2|\mathbb{S}^n|}{||v||^2_2}\mathrm{d}\omega
\end{equation*}
with $C_n=\frac{4}{n}\frac{\pi^{n/2}}{\Gamma(n/2)}$, showing that all such solutions take the explicit form
\begin{equation*}
u_\theta(\xi) = \left(\frac{\sqrt{1-|\theta|^2}}{1-\theta\cdot\xi}\right)^{n/2}, 
\qquad \theta \in \mathbb{R}^{n+1},\ |\theta|<1.
\end{equation*}
In $\mathbb{R}^n$, Chen and Zhou \cite{Chen-Zhou_2025}, considered the critical growth equation
\begin{equation*}
\mathcal{L}_\Delta u = k\, u\ln u, \qquad u \ge 0 \ \text{in } \mathbb{R}^n,
\end{equation*}
and showed, via a moving-plane method adapted to the Logarithmic Laplacian $\mathcal{L}_\Delta$ and the non-monotone nonlinearity $t \mapsto t\ln t$, that $k = 4/n$ is the unique critical exponent for which positive solutions exist, all of the bubble form
\begin{equation*}
u_{\tilde x,t}(x) = \beta_n\left(\frac{t}{t^2+|x-\tilde x|^2}\right)^{n/2}, 
\qquad t>0,\ \tilde x \in \mathbb{R}^n, 
\qquad \beta_n = 2^{n/2} e^{\frac{n}{2} \psi(n/2)}.
\end{equation*}

In \cite{Fernandez-Saldana-2025}, Fern\'andez and Salda\~na  unified these two classification results by constructing the conformal logarithmic Laplacian on $\mathbb{S}^n$ intrinsically, as
\begin{equation*}
\mathscr{P}_g^{\ln}u(z) = c_n\int_{\mathbb{S}^n}\frac{u(z)-u(\zeta)}{|z-\zeta|^n}\,dV_g(\zeta) + A_n u(z), 
\qquad A_n = 2\psi(n/2),
\end{equation*}
establishing its spectral decomposition on spherical harmonics and its conformal covariance law. Via the stereographic pullback $\iota$, they showed that the equation $\mathscr{P}_g^{\ln}u = \frac{4}{n}u\ln|u| + \mu u$ on $\mathbb{S}^n$, $\mu \in \mathbb{R}$ is equivalent to
\begin{equation*}
\mathcal{L}_\Delta v = \frac{4}{n} v\ln|v| + \mu v \quad \text{in } \mathbb{R}^n,
\end{equation*}
extending the classification in \cite{Frank-Konig-Tang-2020} to weak solutions, recovering the bubbles in \cite{Chen-Zhou_2025} as a special case. Most recently, Chen, Chen, and Hauer \cite{Chen-Chen-Hauer-2026} interpolated between the fractional and logarithmic regimes by differentiating $\mathscr{P}_g^s$ at a general order $s \in (0,1)$ rather than at $s=0$, defining the conformal fractional--logarithmic Laplacian
\begin{equation}
\mathscr{P}_g^{s+\ln}u := \left.\frac{d}{dt}\right|_{t=s}\mathscr{P}_g^t u,
\end{equation}
and studied the associated Yamabe-type equation on $\mathbb{S}^n$, whose $s\to 0^+$ limit recovers the logarithmic Yamabe equation of \cite{Fernandez-Saldana-2025} and, through it, the classification in \cite{Frank-Konig-Tang-2020} and \cite{Chen-Zhou_2025}. They further used this framework to establish new sharp fractional-logarithmic Sobolev-type inequalities and to show that the naive fractional-logarithmic analogue of the sharp Sobolev inequality fails in general.

Parallel to this, the Choquard equation, in which the nonlinearity is mediated by a Riesz-type convolution kernel rather than a pointwise power, has been extensively studied for the classical and fractional Laplacians, motivated by its appearance in models of self-gravitating quantum particles, nonlinear optics, and the Hartree approximation of many-body quantum systems \cite{Lieb-1977, Moroz-VanSchaftingen-2013, Moroz-VanSchaftingen-2017, Moroz-VanSchaftingen-2015-1, Moroz-VanSchaftingen-2015-2, Moroz-VanSchaftingen-2015-3}. We refer the reader to \cite{Bucur-Cassani-Tarsi-2022, Cassani-Du-Liu-2024, Cingolani-Weth-2022, Cingolani-Weth-2016, Choquard-Stubbe-Vuffray-2008, Masaki-2011} for results on the local elliptic equations with Choquard type nonlinearity and logarithmic kernel. However, the present problem differs from these models because both the diffusion operator and the interaction potential are logarithmic.

The study of critical Choquard equations is closely connected with
sharp inequalities that control the nonlocal interaction energy
in terms of the quadratic form of the diffusion operator.
For the fractional Laplacian, this control follows by combining
the Hardy-Littlewood-Sobolev inequality (see \cite[Theorem 3.1]{Lieb-1983}) with the critical
Sobolev inequality (see \cite[Theorem 6.5]{DiNezza-Palatucci-Valdinoci_2022}). Our aim is to establish the corresponding
sharp estimate at logarithmic order. In this setting, entropy
provides the link between the interaction and diffusion terms:
Beckner's entropy inequality (see Lemma \ref{Beckner-ineq}) bounds the logarithmic interaction
energy in terms of $\int_{\mathbb{R}^N}u^2\ln|u|\,dx$, while the
logarithmic Sobolev inequality (see \cite[Theorem 3]{Beckner-1995}) associated with Beckner's
Pitt-type inequality controls this entropy by the quadratic
form of $\mathcal{L}_\Delta$. Combining these two estimates,
with the appropriate entropy correction, yields a sharp
inequality relating
\[
\frac{1}{\|u\|_2^2}
\int_{\mathbb{R}^N} G_{\ln}(u)u^2\,dx
-\frac{4}{N}\int_{\mathbb{R}^N}u^2\ln \frac{|u(x)|}{\|u\|_2}\,dx
\]
to the logarithmic diffusion energy. The common bubble
extremals of the two constituent inequalities ensure the
sharpness of the resulting bound. Next, we state our first main result, establishing the sharp logarithmic Choquard inequality and characterizes its extremals.
\begin{proposition}\label{new:Log-Choq:ineq}
    Let $u \in L^2(\mathbb{R}^N)$ such that
\begin{equation}\label{reg:weak-solu}
    \int_{\mathbb{R}^N} \ln |\xi|^2 |\mathcal{F}(u)|^2 ~d\xi < \infty \quad \text{and} \quad \int_{\mathbb{R}^N} u^2 \ln (e+|x|^2) ~dx < \infty.
\end{equation}
Then, we have 
    \begin{equation}\label{Log-Choquard-ineq-new}
 \begin{split}
       B_{N,\mathcal{L}} \|u\|_2^2 +  \frac{1}{\|u\|_2^2} \int_{\mathbb{R}^N} G_{\ln}(u) & u^2(x) ~dx ~dy - \frac{4}{N}  \int_{\mathbb{R}^N} u^2(x) \ln \frac{|u(x)|}{\|u\|_2}~dx \leq \int_{\mathbb{R}^N} \ln |\xi|^2 |\mathcal{F}(u)|^2 ~dx
    \end{split}
    \end{equation}
    where 
\begin{equation}\label{def:constant}
B_{N,\mathcal{L}} = \left[2 \frac{B_{N,1}}{B_{N,0}} + \frac{4}{N} \ln (B_{N,0})\right], \ B_{N,1} = B_{N,0} \left[ \ln 2 + 2 \psi\left(\frac{N}{2}\right) - \psi(N)\right] \ \text{and} \ B_{N,0}= \left(\frac{\Gamma(N)}{\pi^{\frac{N}{2}}\Gamma\left(\frac{N}{2}\right)}\right)^\frac{1}{2}.
\end{equation}
Moreover, the extremal functions of \eqref{Log-Choquard-ineq-new} are given up to conformal automorphism by $u(x)= A \left(1+ |x|^2\right)^{\frac{-N}{2}}$ for any $A  \in \mathbb{R}^+.$
\end{proposition}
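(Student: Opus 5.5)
The proof combines two sharp inequalities that share the same extremals: Beckner's entropy inequality (Lemma \ref{Beckner-ineq}) for the logarithmic interaction term, and the logarithmic Sobolev inequality from Beckner's Pitt-type inequality \cite[Theorem 3]{Beckner-1995} for the diffusion term. Both sides of \eqref{Log-Choquard-ineq-new} change by the same factor $\lambda^2$ under $u\mapsto\lambda u$, because the entropy term is written with $\ln(|u|/\|u\|_2)$ and the interaction term carries the factor $1/\|u\|_2^2$. So we may assume $\|u\|_2=1$. We may also assume $u\ge 0$: replacing $u$ by $|u|$ does not increase $\int\ln|\xi|^2|\mathcal{F}(u)|^2$, since $|\mathcal{F}(|u|)|$ and $|\mathcal{F}(u)|$ differ only through the quadratic form $\mathcal{E}_L$ of $\mathcal{L}_\Delta$, whose kernel representation satisfies $\mathcal{E}_L(|u|)\le\mathcal{E}_L(u)$. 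The other terms are unchanged.

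\emph{Step 1: interaction bound.} With $f=u^2$, a probability density, Beckner's entropy inequality should take the form
\[
\int\!\!\int \ln\frac{1}{|x-y|^4}\,f(x)f(y)\,dx\,dy \le \frac{4}{N}\cdot\frac{1}{2}\cdot 2\int f\ln f\,dx + C_1,
\]
that is, $\int G_{\ln}(u)u^2 \le \frac{4}{N}\int u^2\ln u^2\,dx + C_1$. The sharp constant $C_1$ is computed by evaluating both sides on the normalized bubble $u_0=B_{N,0}(1+|x|^2)^{-N/2}$; note that $\|u_0\|_2=1$ is exactly the definition of $B_{N,0}$. The integrability assumption $\int u^2\ln(e+|x|^2)<\infty$ guarantees that the double integral is finite. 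Combining with the entropy term, the left side of \eqref{Log-Choquard-ineq-new} is bounded by
\[
B_{N,\mathcal{L}}+C_1+\frac{4}{N}\int u^2\ln u\,dx .
\]

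\emph{Step 2: entropy bound.} Beckner's logarithmic Sobolev inequality gives
\[
\frac{4}{N}\int u^2\ln|u|\,dx \le \int\ln|\xi|^2|\mathcal{F}(u)|^2\,d\xi + C_2, \qquad \|u\|_2=1,
\]
with the same bubble family as extremals. The constant $C_2$ is again obtained by evaluating on $u_0$. This requires two computations:
\begin{itemize}
\item $\int u_0^2\ln u_0$, which involves $\psi(N)-\psi(N/2)$ through the Beta integral $\int(1+|x|^2)^{-N}\ln(1+|x|^2)\,dx$;
\item $\int\ln|\xi|^2|\widehat{u_0}|^2$, which is obtained by differentiating $\|(-\Delta)^{s/2}u_0\|_2^2$ at $s=0$. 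This quantity is explicit in Gamma functions via the known Fourier transform of $(1+|x|^2)^{-N/2}$, a Bessel potential, or equivalently via the Funk–Hecke eigenvalues of the stereographic projection.
\end{itemize}
These derivatives produce the expression $B_{N,1}/B_{N,0}=\ln2+2\psi(N/2)-\psi(N)$.

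\emph{Step 3: assemble the constants.} Summing Steps 1 and 2 gives \eqref{Log-Choquard-ineq-new} provided
\[
B_{N,\mathcal{L}}+C_1+C_2=0 .
\]
This identity holds automatically if both constants are computed by inserting $u_0$: in that case \eqref{Log-Choquard-ineq-new} must be an equality at $u_0$, and $B_{N,\mathcal{L}}$ is precisely this equality value. So the real task is to evaluate each term of \eqref{Log-Choquard-ineq-new} at $u_0$ and check that they reproduce \eqref{def:constant}.

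I expect this bookkeeping to be the main obstacle, and I would organise it as follows.
\begin{itemize}
\item $\int G_{\ln}(u_0)u_0^2$: I would compute it on the sphere, where $\ln|x-y|$ pulls back to $\ln|\omega-\eta|$ plus conformal-factor terms.
\item $\frac{d}{ds}\big|_{s=0}$ of the explicit Riesz/Sobolev energies: I would take these from the fractional HLS and Sobolev equality cases, whose derivatives are computable.
\end{itemize}
An alternative is to differentiate the sharp fractional Choquard inequality (HLS combined with the fractional Sobolev inequality) at $s=0$, since it is an equality on the bubbles for every $s$. That route yields both the inequality and the constant in one stroke.

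\emph{Equality cases.} Equality in \eqref{Log-Choquard-ineq-new} forces equality in both Step 1 and Step 2. The extremals of Beckner's log-HLS/entropy inequality and of the log-Sobolev inequality are the conformal images of $u_0$, namely the translations and dilations of $(1+|x|^2)^{-N/2}$, together with the scaling $A>0$ coming from homogeneity. Conversely, these functions attain equality, because the whole statement is conformally invariant: translations and dilations change each side by the same additive logarithmic terms.
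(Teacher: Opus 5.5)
Your argument is the paper's. Both proofs normalize $\|u\|_2=1$, add Beckner's entropy (log-HLS) inequality for $u^2$ to the logarithmic Sobolev inequality coming from Beckner's Pitt inequality, and read off sharpness and the extremals from the common bubble $u_0=B_{N,0}(1+|x|^2)^{-N/2}$.

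The only difference is how the constant is fixed. The paper quotes Beckner's explicit constants $C_N$, $G_N$ and checks $B_{N,\mathcal{L}}=\frac4N G_N+2\ln(2\pi)-\frac{2C_N}{N}$; the shift $2\ln(2\pi)$ comes from Pitt's inequality being stated for $\mathcal{F}_0$, with kernel $e^{2\pi\iota x\cdot\xi}$. You instead identify $B_{N,\mathcal{L}}$ as the equality value at $u_0$. That is legitimate and avoids the normalization bookkeeping, but you leave the evaluation undone.

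It does close. The ingredients are:
\begin{itemize}
\item differentiating $(-\Delta)^s(1+|x|^2)^{-\frac{N-2s}{2}}=A_{N,s}(1+|x|^2)^{-\frac{N+2s}{2}}$ at $s=0$, which gives $\mathcal{L}_\Delta(1+|x|^2)^{-N/2}=\bigl(2\ln 2+2\psi(\frac N2)-2\ln(1+|x|^2)\bigr)(1+|x|^2)^{-N/2}$;
\item the Beta-integral identity $\int u_0^2\ln(1+|x|^2)\,dx=\psi(N)-\psi(\frac N2)$;
\item for the interaction term, the stereographic identity $|x-y|=\frac12|\omega-\eta|(1+|x|^2)^{1/2}(1+|y|^2)^{1/2}$, together with $\iint\ln|\omega-\eta|\,d\bar\sigma\,d\bar\sigma=\ln 2+\frac12(\psi(\frac N2)-\psi(N))$ for the normalized measure $\bar\sigma$ on $\mathbb{S}^N$.
\end{itemize}
This yields
\begin{gather*}
\int_{\mathbb{R}^N}\ln|\xi|^2|\mathcal{F}(u_0)|^2\,d\xi=2\ln 2+4\psi(\tfrac N2)-2\psi(N),\qquad \int_{\mathbb{R}^N}G_{\ln}(u_0)u_0^2\,dx=-2\bigl(\psi(N)-\psi(\tfrac N2)\bigr),\\
\frac4N\int_{\mathbb{R}^N}u_0^2\ln u_0\,dx=\frac4N\ln B_{N,0}-2\bigl(\psi(N)-\psi(\tfrac N2)\bigr).
\end{gather*}
Hence the equality value is $2\ln 2+4\psi(\frac N2)-2\psi(N)+\frac4N\ln B_{N,0}=B_{N,\mathcal{L}}$, exactly as claimed.

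One caveat concerns your side remark. Differentiating the composite fractional Choquard inequality (HLS followed by fractional Sobolev) at $s=0$ does not give \eqref{Log-Choquard-ineq-new} ``in one stroke''. At $s=0$ that composite inequality reduces to the identity $\|u\|_2^4\le\|u\|_2^4$, so its derivative is $\frac{1}{\|u\|_2^2}\int G_{\ln}(u)u^2\,dx\le 2\int\ln|\xi|^2|\mathcal{F}(u)|^2\,d\xi+c\|u\|_2^2$, with no entropy term. This is just the sum of \eqref{Log-Choquard-ineq-new} and the log-Sobolev inequality, hence weaker. The entropy term survives only if HLS and Sobolev are differentiated separately, which simply reproduces your Steps 1 and 2.
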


Inequality \eqref{Log-Choquard-ineq-new} identifies the sharp scaling balance between the logarithmic diffusion, the convolution interaction, and the entropy correction, which further motivates the study of the problem in \eqref{main-problem}. Moreover, we do not identify \eqref{main-problem} with the Euler–Lagrange equation of this inequality. In particular,
knowing the extremals of the inequality does not by itself classify all positive solutions of the equation. Establishing that every such solution belongs to the bubble family is a separate rigidity problem. 

This places problem \eqref{main-problem} at the interface of two lines of research that, to the best of our knowledge, have not yet been brought together: the theory of the logarithmic Laplacian on one hand, and critical logarithmic Choquard-type problems on the other. 

As it is shown in \cite[Propositon 1.3]{Chen-Weth_2019}, the natural domain of pointwise definition of $L_{\Delta}$ is the set of uniformly Dini continuous functions $u$ in $\mathbb{R}^N$ with the integral restriction
\[
\|u\|_{L_0^1(\mathbb{R}^N)} :=
\int_{\mathbb{R}^N}
\frac{|u(x)|}{1+|x|^N}\,dx < \infty .
\]
Precisely, by denoting the modulus of continuity of $u$ at $x\in\mathbb{R}^N$ by
\[
\omega_{u,x}:[0,1]\to[0,\infty),
\qquad
\omega_{u,x}(r)=\sup_{y\in\mathbb{R}^N,\,|y-x|\le r}|u(y)-u(x)|,
\]
a function $u$ is said to be Dini continuous at $x$ if
\[
\int_0^1 \frac{\omega_{u,x}(r)}{r}\,dr<+\infty .
\]
Let $D_0(\mathbb{R}^N)$ be the set of all functions $u$ having the Dini continuity at any $x\in\mathbb{R}^N$ and denote
\[
\mathbb{H}_{\ln}(\mathbb{R}^N):= \left\{u \in L^2(\mathbb{R}^N): \int_{\mathbb{R}^N} u^2 \ln (e+|x|^2) ~dx + \left|\int_{\mathbb{R}^N} u^2 \ln |u| ~dx \right| < \infty\right\}.
\]
Before stating the main result, we say a function $u$ is said to be a (classical) solution of \eqref{main-problem} if
\[
u\in L_0^1(\mathbb{R}^N)\cap D_0(\mathbb{R}^N) \cap \mathbb{H}_{\ln}(\mathbb{R}^N)
\]
satisfies the equation \eqref{main-problem} pointwisely.
\newline
Our first result is the existence of Talenti bubbles type solutions:
\begin{theorem}\label{thm:bubble-solution}
    Let $\sigma \in \mathbb{R}$, $t>0$, $x_0 \in \mathbb{R}^N$, and 
    \begin{equation}\label{extremals}
         u_{\sigma,t}(x) := e^{\frac{N}{4}\left(\sigma - B_{N, \mathcal{L}}\right)} B_{N,0} \left(\frac{t}{t^2 + |x-x_0|^2}\right)^\frac{N}{2} \quad \text{for all} \ x \in \mathbb{R}^N 
    \end{equation}
    where the constants $B_{N,0}$ and $B_{N, \mathcal{L}}$ are defined in \eqref{def:constant}. Then, for any $\sigma \in \mathbb{R}$, $u_{\sigma,t}$ is a (classical) solution of 
\begin{equation}\label{main:prob}
\mathcal{L}_\Delta u= \sigma u + \frac{1}{\|u\|_2^2} \left[G_{\ln}(u) - \frac{4}{N}\left(\int_{\mathbb{R}^N} u^2 \ln u ~dx\right) \right] u \quad \text{in }\mathbb{R}^N.
    \end{equation}
\end{theorem}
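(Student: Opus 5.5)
We verify directly that $u_{\sigma,t}$ solves \eqref{main:prob}, using explicit formulas for the bubble $U_t(x) := \left(\frac{t}{t^2+|x-x_0|^2}\right)^{N/2}$. The abstract suggests obtaining these as limits of fractional Choquard bubbles. We first reduce to $t=1$, $x_0=0$. Translation invariance is clear. For dilations, set $u_\lambda(x)=\lambda^{-N/2}u(x/\lambda)$. Then:
\begin{itemize}
\item $\mathcal{L}_\Delta u_\lambda = (\mathcal{L}_\Delta u)_\lambda - 2\ln\lambda\, u_\lambda$, by \eqref{main:operator};
\item $\|u_\lambda\|_2=\|u\|_2$;
\item $G_{\ln}(u_\lambda)(x)=G_{\ln}(u)(x/\lambda)-4\ln\lambda\,\|u\|_2^2$;
\item $\int u_\lambda^2\ln u_\lambda = \int u^2\ln u - \frac N2 \ln\lambda\,\|u\|_2^2$.
\end{itemize}
On the right-hand side the log shifts combine as $-4\ln\lambda+\frac4N\cdot\frac N2\ln\lambda=-2\ln\lambda$, which matches the left-hand side. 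So the equation is dilation invariant. Similarly, for $u\mapsto cu$ both sides are homogeneous of degree one, except for the term $-\frac4N\ln c$ coming from the entropy. Hence $cu$ solves \eqref{main:prob} with $\sigma$ replaced by $\sigma+\frac4N\ln c$. This explains the prefactor $e^{\frac N4(\sigma-B_{N,\mathcal L})}$: it suffices to show that $v:=B_{N,0}(1+|x|^2)^{-N/2}$, which satisfies $\|v\|_2=1$, solves \eqref{main:prob} with $\sigma=B_{N,\mathcal L}$.

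For $v$ we need three ingredients.

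\textbf{(a) The logarithmic Laplacian of the bubble.} Differentiate at $s=0$ the identity $(-\Delta)^s U = \frac{\Gamma(\frac{N+2s}{2})}{\Gamma(\frac{N-2s}{2})}2^{2s}\,U^{\frac{N+2s}{N-2s}}$ for $U=(1+|x|^2)^{-\frac{N-2s}{2}}$, keeping track of the $s$-dependence of $U$. Equivalently, use the known formula of Chen and Zhou, $\mathcal{L}_\Delta U_1=(2\ln2+2\psi(\tfrac N2))U_1+\frac4N U_1\ln U_1$ for $U_1=(1+|x|^2)^{-N/2}$.

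\textbf{(b) The logarithmic Riesz potential.} Differentiate in $\alpha$ the classical formula $|x|^{-(N-\alpha)}\ast(1+|y|^2)^{-N} = C_\alpha(1+|x|^2)^{-\frac{N-\alpha}{2}}$ at the exponent making the kernel $|x|^{-4\varepsilon}$ (or directly via the HLS extremal identity, i.e.\ the Choquard bubble identity). This should yield $G_{\ln}(v^2)(x)= -2\ln(1+|x|^2)+ K_N$, with $K_N$ explicit in digamma values, namely $\psi(N)$ and $\psi(\frac N2)$. Since $\ln U_1=-\frac N2\ln(1+|x|^2)$, we have $-2\ln(1+|x|^2)=\frac4N\ln U_1$. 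The $x$-dependent part therefore matches the $\frac4N u\ln u$ term from (a).

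\textbf{(c) The entropy.} $\int v^2\ln v\,dx$ is computed from $\frac{d}{dp}\int(1+|x|^2)^{-p}\,dx$ at $p=N$ using Beta functions, again producing $\psi(N)-\psi(\frac N2)$ and $\ln B_{N,0}$.

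Substituting (a)–(c) into \eqref{main:prob}, the $\ln(1+|x|^2)$ terms cancel and we are left with a constant identity $\sigma = B_{N,\mathcal L}$. We must then check that the constant $2\ln2+2\psi(\frac N2)$ from (a), minus $K_N$ and the entropy terms, equals $2\frac{B_{N,1}}{B_{N,0}}+\frac4N\ln B_{N,0}$. The scaling by $B_{N,0}$ in $\mathcal{L}_\Delta$ and in $u\ln u$ produces the $\frac4N\ln B_{N,0}$ shift. Alternatively, one can observe that $v$ is the extremal in Proposition~\ref{new:Log-Choq:ineq}, so equality there already fixes the constant once the pointwise structure is known.

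Finally, regularity: $u_{\sigma,t}$ is smooth, lies in $L^1_0$, decays like $|x|^{-N}$, and so belongs to $\mathbb{H}_{\ln}$. This makes it a classical solution.

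The main obstacle is step (b): the exact evaluation of $\ln|x|^{-4}\ast v^2$ together with its constant $K_N$, and the bookkeeping that reconciles all the digamma constants with $B_{N,\mathcal L}$. I would first try the differentiation-in-$\alpha$ approach at the level of the fractional Choquard bubble identity, justifying the interchange of derivative and integral by dominated convergence.
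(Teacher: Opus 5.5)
Your proposal is correct in outline, but it takes a genuinely different route from the paper. The paper never evaluates $\mathcal{L}_\Delta$, $G_{\ln}$ or the entropy on the bubble. Instead it starts from the fractional Choquard bubbles $w_{s,t}=B_{N,s}U_{s,t}$, which solve $(-\Delta)^s w=(|x|^{-4s}\ast w^2)w$. It expands both sides to first order in $s$: the Chen--Weth/Chen--Zhou expansion for the operator, $|x|^{-4s}=1+s\ln|x|^{-4}+o(s)$ for the kernel, and Lemma~\ref{lem:constants} for $B_{N,s}$. Matching the $O(s)$ terms gives the case $\sigma=B_{N,\mathcal L}$, and the amplitude scaling $v=\alpha u_\beta$ is then used exactly as you do.

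The two routes buy different things:
\begin{itemize}
\item The paper's route explains where $B_{N,\mathcal L}=2B_{N,1}/B_{N,0}+\frac4N\ln B_{N,0}$ comes from: it is the $s$-derivative of the sharp Choquard constant. The price is controlling $o(s)$ remainders, for instance in the kernel expansion, which is not uniform in $x$.
\item Your direct verification avoids limits altogether. It uses only the Chen--Zhou identity for $\mathcal{L}_\Delta$ of the bubble plus Beta-function integrals.
\item It also shows that on the bubble $G_{\ln}(v)=\frac4N\ln(v/B_{N,0})$. So the Choquard equation collapses to the local equation $\mathcal{L}_\Delta v=\frac4N v\ln v+\mu v$ with a shifted constant.
\end{itemize}

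Step (b) needs repair, however.
\begin{itemize}
\item The identity you quote is false for $\alpha\neq N$. For the kernel $|x|^{-\mu}$ the conformally invariant density is $(1+|y|^2)^{-(2N-\mu)/2}$, and the correct identity is $|x|^{-\mu}\ast(1+|y|^2)^{-(2N-\mu)/2}=\pi^{N/2}\frac{\Gamma(\frac{N-\mu}{2})}{\Gamma(N-\frac\mu2)}(1+|x|^2)^{-\mu/2}$.
\item Differentiating this at $\mu=0$ produces an extra term from the density, $\frac12\int\ln(1+|y|^2)U_1^2\,dy=\frac12\|U_1\|_2^2(\psi(N)-\psi(\frac N2))$. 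It cancels the derivative of the constant exactly.
\item Hence $K_N=0$, not a digamma combination, and $G_{\ln}(v)=-2\ln(1+|x|^2)$.
\item A quicker way: stereographic projection gives $\int\ln|x-y|\,U_1^2(y)\,dy=\frac12\|U_1\|_2^2\ln(1+|x|^2)+c$. Then $c=0$, because $(1+|y|^2)^{-N}dy$ is invariant under $y\mapsto y/|y|^2$, which forces $\int\ln|y|\,U_1^2\,dy=0$.
\end{itemize}

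With (c) you get $\int v^2\ln v\,dx=\ln B_{N,0}-\frac N2(\psi(N)-\psi(\frac N2))$. Matching coefficients then gives $\sigma=2\ln2+2\psi(\frac N2)+\frac4N\int v^2\ln v\,dx=2\ln2+4\psi(\frac N2)-2\psi(N)+\frac4N\ln B_{N,0}=B_{N,\mathcal L}$.

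Two small bookkeeping points. The entropy enters with a plus sign, not a minus. The $\frac4N\ln B_{N,0}$ shift comes from the entropy integral, not from $\mathcal{L}_\Delta$, which is linear. With these corrections the constant check closes, and your fallback through the equality case of Proposition~\ref{new:Log-Choq:ineq} is unnecessary.
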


To construct bubble type solution as in \eqref{extremals}, we consider the following normalized problem with $\|u\|_2 =1$ and $\sigma= B_{N,\mathcal{L}}$:
\begin{equation}\label{prob:log-choq}
 \mathcal{L}_\Delta u = B_{N,\mathcal{L}} u + \left[G_{\ln}(u)  - \frac{4}{N}\left(\int_{\mathbb{R}^N} u^2 \ln u ~dx\right)\right] u \quad \text{in} \ \mathbb{R}^N.
\end{equation}
The solutions are obtained as the limiting profile of solutions to the following critical fractional Choquard problem:
\begin{equation}\label{fractional-Choquard-prob}
(-\Delta)^s w_{s,t} = (|x|^{-4s} \ast w_{s,t}^2) w_{s,t} \quad \text{in} \ \mathbb{R}^N.
\end{equation}
By \cite[Theorem 2.15]{Avenia-Siciliano-Squassina_2015}, we know that every fixed-sign solution of \eqref{fractional-Choquard-prob} is of the form
\begin{equation}\label{talenti-def-funct}
w_{s,t} = c U_{s,t}, \quad \text{where} \quad U_{s,t}(x) = \left(\frac{t}{t^2+|x-x_0|^2}\right)^{\frac{N-2s}{2}}, \qquad x \in \mathbb{R}^N,
\end{equation}
for $s \in \left[0,\frac{1}{4}\right)$, and some $t>0$, $x_0 \in \mathbb{R}^N$, and $c>0$. Moreover, equation \eqref{fractional-Choquard-prob} is the Euler--Lagrange equation associated with the Hardy--Littlewood--Sobolev inequality; see \cite[Theorem 3.1]{Lieb-1983}. The first step of the proof is to determine the sharp constant $c=B_{N,s}$ explicitly (see \eqref{frac-choquard-constant}) and to derive its asymptotic behavior as $s \to 0^+$. These asymptotic estimates, together with the limiting profile of the fractional Talenti bubbles $U_{s,t}$, play a crucial role in deriving the limiting profile of the family $w_{s,t}=B_{N,s}U_{s,t}$. Next, we derive the first-order expansions of both sides of \eqref{fractional-Choquard-prob}. For the operator term, we employ the first-order expansion of $(-\Delta)^s$ in terms of $\mathcal{L}_\Delta$ established in \cite[Theorem 1.1]{Chen-Weth_2019}. For the nonlocal convolution term, we apply Taylor expansions separately to the singular kernel $|x|^{-4s}$ and to the quadratic term $w_{s,t}^2$, and then analyze the interaction between these expansions. This interaction produces nonlinear terms that are not invariant with respect to the scaling parameter $t$.

Finally, by combining the asymptotic behavior of the constant $B_{N,s}$ with the norm estimates for the fractional Talenti bubbles $U_{s,t}$, we pass to the limit as $s\to0^+$ and conclude that the limiting profile of the fractional Talenti bubbles is a solution of the logarithmic Choquard problem \eqref{prob:log-choq}. Moreover, by exploiting the non-homogeneity of the correction term appearing in \eqref{prob:log-choq}, we show that the family of solutions defined in \eqref{extremals} satisfies the logarithmic Choquard problem \eqref{prob:log-choq} for any $\sigma \in \mathbb{R}.$

Next, we investigate the uniqueness of solutions to problem \eqref{prob:log-choq} within the class of limiting profiles of the fractional Talenti bubbles defined in \eqref{extremals}. To this end, we employ the method of moving planes together with techniques based on the Kelvin transform. However, extending these methods to the present setting involves several substantial difficulties.


The main difficulty arises from the doubly nonlocal structure of the nonlinearity, which involves a sign-changing and scaling non-invariant logarithmic kernel. Consequently, the recent approach developed in \cite{Chen-Zhou_2025} cannot be applied directly. Indeed, the arguments therein rely crucially on decomposing the logarithmic nonlinearity into its positive and negative parts, a strategy that is no longer applicable in the presence of the nonlocal convolution term.

Moreover, the approach of \cite{Chen-Li-Ou_2006} based on the associated integral equation is also unavailable in our setting. This is due to the fact that the fundamental solution of the logarithmic Laplacian $\mathcal{L}_\Delta$ is neither positive nor monotonically decreasing, together with the lack of Sobolev embedding theorems associated with the logarithmic Laplacian in $\mathbb{R}^N$ (see \cite{Chen-Veron-2024, Felmer-Yang-2014}). To overcome these difficulties, we develop a method of moving planes tailored to the logarithmic Laplacian and the doubly nonlocal structure of the equation, while carefully accounting for the nonlocal effects induced by the logarithmic convolution terms.

\begin{theorem}\label{thm-radi-mono-prop}
    Let $N \geq 1$, $\sigma \in \mathbb{R}$, $u \in L^1_0(\mathbb{R}^N) \cap \mathbb{H}_{\ln}(\mathbb{R}^N) \cap \mathcal{D}^0(\mathbb{R}^N)$ be a positive solution of \eqref{main:prob}.
Then, $u$ is radially symmetric and monotonically decreasing about some point in $\mathbb{R}^N$. Furthermore, there exists $u_\infty \in  (0, \infty)$ such that
\begin{equation}\label{asym-beha}
        \lim_{|x| \to \infty} |x|^N u(x) = u_\infty
\end{equation}
and 
\begin{equation}\label{solu:upperbound:estimate}
\|u\|_2 \leq e^{\frac{N}{4}\left(\sigma- B_{N, \mathcal{L}}\right)}. 
\end{equation}
\end{theorem}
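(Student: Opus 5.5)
We plan to reduce the equation to a local-looking nonlinearity with a nonlocal potential, and then run a moving plane argument adapted to $\mathcal{L}_\Delta$, following the strategy of \cite{Chen-Zhou_2025}. We set $\lambda := \|u\|_2^2$ and $c_u := \sigma - \frac{4}{N\lambda}\int u^2\ln u\,dx$, which is a finite constant because $u\in\mathbb{H}_{\ln}(\mathbb{R}^N)$. The equation then reads
\[
\mathcal{L}_\Delta u = \Big(c_u + \tfrac{1}{\lambda}G_{\ln}(u)\Big)u, \qquad G_{\ln}(u)(x) = -4\int_{\mathbb{R}^N}\ln|x-y|\,u^2(y)\,dy .
\]

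\textbf{Preliminary estimates.} The weight $\int u^2\ln(e+|y|^2)<\infty$ gives
\[
G_{\ln}(u)(x) = -4\lambda\ln|x| + o(1) \quad \text{as } |x|\to\infty,
\]
and $G_{\ln}(u)$ is continuous. Hence the potential $V:=c_u+\lambda^{-1}G_{\ln}(u)$ behaves like $-4\ln|x|$ at infinity.

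We next obtain the asymptotics \eqref{asym-beha} through the Kelvin transform
\[
\bar u(x)=|x|^{-N}u(x/|x|^2).
\]
Using the transformation law of $\mathcal{L}_\Delta$ under inversion, $\mathcal{L}_\Delta \bar u = |x|^{-N}\big[(\mathcal{L}_\Delta u)(x/|x|^2)+4\ln|x|\,u(x/|x|^2)\big]$ (the conformal covariance behind \cite{Fernandez-Saldana-2025}), the $-4\ln|x|$ growth of $V$ exactly cancels the logarithmic factor. So $\bar u$ satisfies an equation $\mathcal{L}_\Delta\bar u=\bar V\bar u$ near $0$ with $\bar V$ bounded near the origin. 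A maximum principle / Harnack-type argument for $\mathcal{L}_\Delta$ with bounded potential then shows that $\bar u$ extends continuously and positively to $0$. This yields $u_\infty=\bar u(0)\in(0,\infty)$.

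\textbf{Moving planes.} For $T_\mu=\{x_1=\mu\}$ we set $w_\mu(x)=u(x^\mu)-u(x)$ on $\Sigma_\mu=\{x_1<\mu\}$, where $x^\mu$ is the reflection of $x$ across $T_\mu$. The key identity is for the potential:
\[
G_{\ln}(u)(x^\mu)-G_{\ln}(u)(x) = -4\int_{\Sigma_\mu}\ln\frac{|x-y|}{|x^\mu-y|}\big(u^2(y^\mu)-u^2(y)\big)\,dy .
\]
For $x,y\in\Sigma_\mu$ the logarithm is negative, so wherever $w_\mu\ge0$ the kernel gives the right sign. This is the doubly-nonlocal version of the decomposition used in \cite{Chen-Zhou_2025}. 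The equation for $w_\mu$ becomes
\[
\mathcal{L}_\Delta w_\mu = V(x^\mu)w_\mu + \lambda^{-1}u(x)\big(G_{\ln}(u)(x^\mu)-G_{\ln}(u)(x)\big).
\]
At a negative minimum of $w_\mu$ in $\Sigma_\mu$ we would use the antisymmetric maximum principle for $\mathcal{L}_\Delta$: the integral part gives a strongly negative contribution, of the size of a logarithmic capacity of the negative set. We then have to control $V(x^\mu)$ together with the nonlocal term coming from the negative part $w_\mu^-$.

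\textbf{Main obstacle.} The hard part is starting the plane and handling the sign-changing potential $V$. The potential $V$ tends to $-\infty$ at infinity, which is helpful for a decay-at-infinity principle. Its positive part near the origin is harmful; to handle it we apply a narrow-region principle, in which the $\mathcal{L}_\Delta$ quadratic form on thin strips behaves like $\ln(1/\text{width})\to\infty$ and beats any bounded potential.

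The nonlocal convolution term must be bounded by $\|w_\mu^-\|$ in a weighted $L^2$ with a logarithmic weight. We would use the energy method, testing the equation with $w_\mu^-$, and apply Beckner's entropy inequality to absorb this term.

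Starting from $\mu\to-\infty$ via the decay $u\sim u_\infty|x|^{-N}$ and the Kelvin picture, we move the plane to the critical position $\mu_0$. If $w_{\mu_0}\not\equiv0$, strict positivity follows from the strong maximum principle, and compactness contradicts maximality of $\mu_0$. Doing this in every direction gives radial symmetry and monotone decrease about some point.

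\textbf{Bound \eqref{solu:upperbound:estimate}.} After translation $u$ is radial and decreasing, and we test the equation against $u$:
\[
\int \ln|\xi|^2|\mathcal{F}u|^2 = \sigma\lambda+\frac1\lambda\int G_{\ln}(u)u^2-\frac4N\int u^2\ln u .
\]
Comparing with Proposition \ref{new:Log-Choq:ineq}, and writing $\ln u=\ln\frac{u}{\|u\|_2}+\frac12\ln\lambda$, gives
\[
B_{N,\mathcal{L}}\lambda \le \sigma\lambda-\frac{2}{N}\lambda\ln\lambda .
\]
This is exactly $\ln\lambda\le\frac N2(\sigma-B_{N,\mathcal{L}})$, i.e. \eqref{solu:upperbound:estimate}. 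The regularity hypothesis \eqref{reg:weak-solu} needed to apply the proposition follows from the equation itself, since its right-hand side is integrable against $u$ by the preliminary estimates.
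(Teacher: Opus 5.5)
Your derivation of \eqref{solu:upperbound:estimate} is the paper's: test with $u$ and compare with Proposition \ref{new:Log-Choq:ineq}. Your reflection identity for $G_{\ln}(u)(x^\mu)-G_{\ln}(u)(x)$ is exactly the paper's nonnegative-kernel term $I_\lambda$. Your second stage, moving planes directly on $u$ once the decay is known, is what the paper does at the very end. The gap is in how you get there.

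\textbf{The asymptotics are not established.} Everything in your moving-plane argument (starting the plane, the decay-at-infinity principle, smallness of the nonlocal term) presupposes \eqref{asym-beha}. Your derivation of \eqref{asym-beha} does not stand as written, for two reasons.

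First, the inversion law has the wrong sign. The correct one, see \eqref{est:conv-term-0}, is $\mathcal{L}_\Delta\bar u(x)=|x|^{-N}(\mathcal{L}_\Delta u)(x/|x|^2)-4\ln|x|\,\bar u(x)$. With your $+$ sign the two logarithms add up to $8\ln|x|$ rather than cancel. With the correct sign the cancellation does happen; this is the content of Proposition \ref{pro:invariance}(iii).

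Second, and more seriously, the sentence that a maximum-principle or Harnack-type argument shows $\bar u$ extends continuously and positively to $0$ is the entire difficulty, and it is not supplied. $\bar u$ solves its equation only on $\mathbb{R}^N\setminus\{0\}$, so you must exclude a singular component concentrated at the origin and then prove local boundedness. Neither is routine for $\mathcal{L}_\Delta$, which gains only a logarithm of a derivative: its Green function is of order $|x|^{-N}(\ln\frac{1}{|x|})^{-2}$ near the pole, integrable but in no $L^q$ with $q>1$. So $L^2$ information on $\bar V\bar u$ cannot be bootstrapped to $L^\infty$, and Harnack inequalities for such borderline kernels are themselves delicate.

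The paper avoids any regularity theory at the singular point. It runs the moving planes on the Kelvin transform $u_1$, which is allowed to be singular at $0$. When the plane stops at $\Lambda<0$, the symmetry about $T_\Lambda$ transfers the value at the regular point $0^\Lambda\neq 0$ to the Kelvin centre, and this is what produces $\lim_{|y|\to\infty}|y|^N u(y)\in(0,\infty)$.

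\textbf{The control of the nonlocal term is missing.} This is the step the paper calls its technical core. Testing with $w_\mu^-$, the harmful contribution is the positive bilinear term $\iint_{\Sigma_\mu\times\Sigma_\mu}K(x,y)\,u(x)\,|w_\mu^-(x)|\,(u(y)+u(y^\mu))\,|w_\mu^-(y)|\,dy\,dx$, with $K(x,y)=\ln\frac{|x^\mu-y|^4}{|x-y|^4}\ge 0$. You need a bound $C_\mu\|w_\mu^-\|_2^2$ with $C_\mu\to 0$, both as $\mu\to-\infty$ and as $\mu$ decreases to the critical position.

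Beckner's entropy inequality, as stated, does not give this. It bounds a logarithmic self-interaction by an entropy; it is nonlinear and sharp on bubbles, with no small parameter. You do not indicate how it would yield a bound linear in $\|w_\mu^-\|_2^2$, with vanishing constant, for this reflected cross term.

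The paper obtains the bound \eqref{est-norm-est-I2} from three ingredients:
\begin{itemize}
\item the pointwise kernel estimate \eqref{kernel-bound}, $K\le C_\alpha(\mu-z_1)^\alpha|x-z|^{-\alpha}$;
\item Cauchy--Schwarz;
\item the decay \eqref{Kelvin-prop}.
\end{itemize}
It then combines this with Beckner's logarithmic Pitt inequality \eqref{left-term-est} on the left and the one-sided bound \eqref{upper-est-I1}, whose $-4\ln|x|$ dominates the $-2\ln|x|$ lost in Pitt's inequality.

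A smaller point: your splitting makes $V(x^\mu)$ the coefficient of $w_\mu$. For $\mu<0$ and $x\in\Sigma_\mu$ one has $|x^\mu|<|x|$, so this coefficient is less negative than $V(x)$, which weakens the decay-at-infinity step. The paper instead keeps $G_{\ln}(u_1)(x)$ as the coefficient and places $u_{1,\lambda}(x)$, which is below $u_1(x)$ on the negative set, in front of $I_\lambda$.
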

\begin{remark}
    Note that for any $u \in \mathbb{H}_{\ln}(\mathbb{R}^N)$, both the integrals terms $G_{\ln}(u)$ and $\int_{\mathbb{R}^N} G_{\ln}(u) u^2 ~dx$ in \eqref{main:prob} are well defined due to the following inequality
    \[
    \ln |x-y| \leq \frac{1}{2} \left(\ln 2 + \ln (1+|x|^2) + \ln (1+ |y|^2)\right) \ \text{for any $x, y \in \mathbb{R}^N$}
    \]
and Beckner's entropy inequality, given in Lemma \ref{Beckner-ineq}.
\end{remark}

Theorem \ref{thm-radi-mono-prop} extends the moving-plane argument developed by Chen and Zhou \cite{Chen-Zhou_2025} from the local logarithmic nonlinearity to the present nonlocal Choquard-type problem involving the logarithmic potential
$
G_{\ln}(u)$
together with a global entropy correction term. The main hurdle arises from the nonlocal nature of the reaction term. In the local setting considered in \cite{Chen-Zhou_2025}, the difference $w_\lambda$ of two Kelvin-reflected solutions solves a linear equation whose coefficient is an explicit, sign-definite pointwise quantity obtained from the monotonicity of $t \mapsto t\ln t$. In contrast, for the present problem, the reflected equation (see \eqref{main-Kelvin-differ-0}) contains the difference of two logarithmic potentials, which acts as a nonlocal integral term whose kernel depends on the plane's position (see $I_\lambda(x)$ in \eqref{differ-rewrite}).

To overcome this difficulty, we adapt the techniques of \cite{Cingolani-Weth-2016, Cingolani-Weth-2022}, originally developed in the local setting. We first establish a Kelvin-invariance identity for the logarithmic potential, which follows from the reflection symmetry of the logarithmic kernel. We then derive a decay estimate for $G_{\ln}(u_1)$ at infinity (see \eqref{upper-est-I1}) by splitting the convolution into near- and far-field contributions and using the decay rate of the Kelvin transform. Finally, we obtain a quantitative bound (see \eqref{est-norm-est-I2}) showing that the nonlocal reaction term $I_\lambda$ is controlled by the $L^2$-norm of the negative part of the difference of two Kelvin-reflected solutions, with a constant $C_\lambda \to 0$. This last estimate is the technical core of the paper: it fills the role as the pointwise sign condition on $V_\lambda,W_\lambda$ and narrow region maximum principles in \cite{Chen-Zhou_2025}, but is obtained through a potential theoretic mechanism. Furthermore, it allows us to complete the moving-plane procedure showing that the optimal position $\Lambda$ (see \eqref{def-Lambda}) of the plane coincides with the symmetry hyperplane. Moreover, a direct application of the sharp inequality in Proposition \ref{new:Log-Choq:ineq} gives the upper bound estimate in \eqref{solu:upperbound:estimate}.

Next, by using the geometric arguments adapted from \cite{Chen-Li-Ou_2006} (see also \cite{Chen-Zhou_2025}), radial symmetry and monotonicity obtained in Theorem~\ref{thm-radi-mono-prop} and properties of the Kelvin transform, we show that any positive classical solution attain the bound in \eqref{solu:upperbound:estimate} and derive the uniqueness of positive solutions. 

\begin{theorem}\label{thm-uniqueness}
Let $u_{\sigma, t}$ be the function defined in \eqref{extremals}. If $u$ is a positive solution of \eqref{main:prob} with $\sigma \in \mathbb{R}$, then $\|u\|_{2}=e^{\frac{N}{4} (\sigma-B_{N, \mathcal{L}})},$ and there exist $t>0$ and $\tilde{x}\in\mathbb{R}^N$ such that
\[
u(x)=u_{\sigma,t}(x-\tilde{x}), \qquad x\in\mathbb{R}^N.
\]
\end{theorem}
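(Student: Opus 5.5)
We would prove Theorem \ref{thm-uniqueness} with the Kelvin-transform rigidity argument of \cite{Chen-Li-Ou_2006} (see also \cite{Chen-Zhou_2025}), built on Theorem \ref{thm-radi-mono-prop}. The plan is to show that $u$ is symmetric about \emph{every} center of a suitable Kelvin transform. Such a function must be a bubble $a\left(\frac{t}{t^2+|x-\tilde x|^2}\right)^{N/2}$, and the amplitude $a$ will then be fixed by the equation.

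\textbf{Step 1: reduction and Kelvin invariance.} First we normalize, which the non-homogeneity of \eqref{main:prob} permits. Replacing $u$ by $\mu u$ with $\mu>0$ leaves $G_{\ln}(u)u/\|u\|_2^2$ unchanged. The entropy term changes by $-\frac{4}{N}\ln\mu$, so $\mu u$ solves \eqref{main:prob} with $\sigma$ replaced by $\sigma+\frac4N\ln\mu$. We may therefore take $\|u\|_2=1$, at the cost of shifting $\sigma$. For $z\in\mathbb{R}^N$, consider the Kelvin transform
\[
u_z(x)=|x-z|^{-N}u\Big(z+\frac{x-z}{|x-z|^2}\Big).
\]
This is the order-$0$ Kelvin transform, since $N-2s=N$ at $s=0$. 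We would verify that $u_z$ again solves \eqref{main:prob} with the same $\sigma$. Three facts are needed:
\begin{enumerate}[(i)]
\item the conformal covariance of $\mathcal{L}_\Delta$, namely $\mathcal{L}_\Delta u_z=|x-z|^{-N}\big[(\mathcal{L}_\Delta u)\circ K_z+2\ln|x-z|\,u\circ K_z\big]$, obtained by differentiating the fractional covariance law at $s=0$;
\item the Kelvin-invariance identity for $G_{\ln}$, which gives $G_{\ln}(u_z)=G_{\ln}(u)\circ K_z+4\ln|x-z|+\text{const}$;
\item the change of variables $\int u_z^2\ln u_z=\int u^2\ln u+N\int u^2\ln|\cdot-z|$ for the entropy term.
\end{enumerate}
The $\ln|x-z|$ terms must cancel among these three contributions. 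Note that $\|u_z\|_2=\|u\|_2$. Asymptotically, \eqref{asym-beha} gives $u_z(x)\to u_\infty$ as $x\to z$, so $u_z$ extends continuously to a positive function with the regularity required by Theorem \ref{thm-radi-mono-prop}. Checking this regularity (Dini continuity and membership in $L^1_0$ and $\mathbb{H}_{\ln}$) is routine but necessary.

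\textbf{Step 2: symmetry for every $z$.} Applying Theorem \ref{thm-radi-mono-prop} to each $u_z$ shows that $u_z$ is radially symmetric about some point $x_z$. In particular, for every $z$ and every direction, $u_z$ is symmetric about some hyperplane orthogonal to that direction. We then use the calculus lemma of \cite{Chen-Li-Ou_2006} (Lemma 5.7–5.8 type), which runs as follows. If, for every $z$, the Kelvin transform $u_z$ is symmetric about some point, then either $u$ is symmetric about $z$ for all $z$, which would force $u$ to be constant and contradict $u\in L^2$, or $u$ has the form
\[
u(x)=a\Big(\frac{t}{t^2+|x-\tilde x|^2}\Big)^{N/2}.
\]
This step uses the radial symmetry of $u$ about some $x_0$ together with the expansion of $u$ at infinity, which we obtain by applying \eqref{asym-beha} to $u$ and $u_z$. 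We expect the main obstacle here: this lemma needs $C^1$ or expansion information beyond the bare limit in \eqref{asym-beha}. Our first attempt would be to extract that information from the symmetry of $u_z$ alone, comparing $u_z(x)$ near $z$ with $u$ at infinity, as in \cite{Chen-Zhou_2025}.

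\textbf{Step 3: identifying the amplitude.} With $u=aU_{0,t}(\cdot-\tilde x)$, Theorem \ref{thm:bubble-solution} tells us that $u_{\sigma,t}$ solves \eqref{main:prob}. By the scaling rule of Step 1, $\mu u_{\sigma,t}$ solves \eqref{main:prob} only with $\sigma+\frac4N\ln\mu$. Uniqueness of $\sigma$ for a given function therefore forces $\mu=1$, because $u_{\sigma,t}>0$ makes $\sigma u$ an independent term. Consequently $a=e^{\frac N4(\sigma-B_{N,\mathcal{L}})}B_{N,0}$. Finally, computing $\|U_{0,t}\|_2=B_{N,0}^{-1}$, which is scale invariant, gives $\|u\|_2=e^{\frac N4(\sigma-B_{N,\mathcal{L}})}$. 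This is consistent with equality in \eqref{solu:upperbound:estimate}.
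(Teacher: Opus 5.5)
\textbf{There is a genuine gap in Step 2.} Step 2 is where the whole classification happens, and it is not carried out. You pass from ``each Kelvin transform $u_z$ is radially symmetric about \emph{some} point'' to the bubble profile by citing a calculus lemma whose hypotheses you concede you cannot verify. Theorem~\ref{thm-radi-mono-prop} gives only Dini continuity, radial symmetry and monotonicity, and the bare limit $|x|^Nu(x)\to u_\infty$. It gives no $C^1$ bound and no second-order expansion at infinity, and you cannot expect $C^1$ for free from an equation driven by $\mathcal{L}_\Delta$. The version of the lemma that needs only continuity (Y.~Y.~Li's) assumes the exact self-duality $u=u_{z,\lambda(z)}$ at every $z$. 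Knowing that $u_z$ is radial about an \emph{unknown} center $x_z$ does not give that. Your fallback of ``comparing $u_z$ near $z$ with $u$ at infinity'' would need precisely the rate of convergence in \eqref{asym-beha} that is missing. The same missing rate also makes the Dini continuity of $u_z$ at $z$ less routine than you say, although the paper also takes this removability for granted.

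\textbf{The missing idea and the paper's route.} The device that replaces derivatives is the value-matching choice of Kelvin radius in Lemma~\ref{lem:self-dual-iden}. Put the center of $u$ at $0$ and set $s=(u_\infty/u(0))^{1/N}$. Then $w(x)=|x|^{-N}u(x/|x|^2-se)$ is axially symmetric about $\mathbb{R}e$ and satisfies $w(0)=u_\infty=s^Nu(0)=w(e/s)$. Strict radial monotonicity therefore pins the center of $w$ at $e/(2s)$, and this yields $u=u^{\#}_s$. The paper then uses no calculus lemma. Lemma~\ref{end-point-fixing} shows $u(x_0)u_\infty=e^{\frac N2(\sigma-B_{N,\mathcal{L}})}B_{N,0}^2$ for normalized solutions, and Lemma~\ref{lem:exact:norm} gives the norm identity. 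In the final step, a discrepancy $u(\theta_0e)\neq u_{\sigma,1}(\theta_0e)$ is transported by a Kelvin transform at $\theta_0e$ into a violation of that product identity.

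\textbf{How to keep your own route.} Once the center of $w$ is located, it suffices for $N\ge2$. Reflect $w$ across every hyperplane through $e/(2s)$ and let $e$ vary. This gives self-duality of $u$ about every sphere $S(v,\sqrt{s^2+|v|^2})$. Evaluating these identities at the point $0$ gives $u(x)=u(0)\big(s^2/(s^2+|x|^2)\big)^{N/2}$ directly. Your Step 3 then correctly fixes the amplitude and the norm, and more simply than the paper does.

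\textbf{A smaller correction in Step 1.} The correct coefficient in the covariance law is $-4\ln|x-z|$, and the correct term in $G_{\ln}(u_z)$ is $-4\ln|x-z|\,\|u\|_2^2$ (Proposition~\ref{pro:invariance}(iii)). With your $+2$ and $+4$, the logarithmic terms would not cancel.
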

\noindent \textbf{Outline of the paper:} In Section \ref{existence}, we derive the sharp explicit constants associated with the fractional Choquard equation and show that the limiting profiles of both the sharp constants and the fractional Talenti bubbles solve problem \eqref{prob:log-choq}. In Section \ref{radial-mono}, we establish the invariance properties of the modified problem \eqref{main:prob}, which reduces to the original problem \eqref{prob:log-choq} under a suitable normalization. We also prove the radial symmetry and monotonicity of solutions to \eqref{main:prob}. In Section \ref{uniqueness}, we establish the uniqueness of solutions corresponding to the limiting profile of the fractional Talenti bubbles. Finally, in Appendix \ref{appendix}, we derive the asymptotic behavior of the constants involved in the proofs and establish several auxiliary norm estimates.
\section{Existence of Talenti bubbles type solutions}\label{existence}
In this section, we first establish the proof of Logarithmic Hardy–Littlewood–Sobolev entropy inequality in \eqref{Log-Choquard-ineq-new}, which plays a crucial role in the study of the logarithmic Choquard problem \eqref{main:prob}. Next, we derive the sharp explicit constants appearing in the solutions of the fractional Choquard problem. By establishing their asymptotic behavior as $s \to 0^+$, we show that the limiting profiles of the constants and fractional Talenti bubbles $U_{s,t}$ yield solutions to the logarithmic Choquard problem \eqref{prob:log-choq}.
\subsection{Logarithmic Hardy-Littlewood-Sobolev entropy inequality}
\textbf{Proof of Proposition \ref{new:Log-Choq:ineq}:} It is easy to see that
\begin{equation}\label{rela:Fourier}
    \mathcal{F}_0(u)(\xi) = (2 \pi)^\frac{N}{2} \mathcal{F}(u)(-2 \pi \xi) \ \text{for} \ \xi \in \mathbb{R}^N
\end{equation}
where $\mathcal{F}_0(u)(\xi):= \int_{\mathbb{R}^N} e^{2 \pi \iota x \xi} u(x) ~dx$. 
Now, by using the Beckner's entropy inequality in \eqref{LOg-HLS-ineq-2} and Pitt's inequality by Beckner in \eqref{Pitts:ineq} for $\frac{u}{\|u\|_2}$ with $u$ satisfying \eqref{reg:weak-solu}, we obtain 
    \begin{equation}\label{LOg-HLS-ineq-new-1}
 \begin{split}
        \frac{1}{\|u\|_2^2} & \int_{\mathbb{R}^N}  \left(\ln \left(\frac{1}{|x|^4}\right) \ast u^2\right) u^2(x) ~dx ~dy - \frac{4}{N}  \int_{\mathbb{R}^N} u^2(x) \ln |u(x)|~dx \\
        & \leq \frac{2C_N}{N} \|u\|_2^2 + \frac{4}{N}  \int_{\mathbb{R}^N} u^2(x) \ln |u(x)|~dx - \frac{8}{N} \|u\|_2^2 \ln \|u\|_2\\
        & \leq \left(\frac{2C_N}{N} - \frac{4 G_N}{N} \right) \|u\|_2^2 + \int_{\mathbb{R}^N} \ln |\xi|^2 |\mathcal{F}_0(u)|^2 ~dx - \frac{4}{N} \|u\|_2^2 \ln \|u\|_2.
    \end{split}
    \end{equation}
Next, using \eqref{rela:Fourier}, applying the change of variables in \eqref{LOg-HLS-ineq-new-1}, and invoking Plancherel's theorem, we obtain        \begin{equation*}\label{LOg-HLS-ineq-new-10}
 \begin{split}
        \frac{1}{\|u\|_2^2} & \int_{\mathbb{R}^N}  \left(\ln \left(\frac{1}{|x|^4}\right) \ast u^2\right) u^2(x) ~dx ~dy - \frac{4}{N}  \int_{\mathbb{R}^N} u^2(x) \ln |u(x)|~dx \\
        & \leq \left(\frac{2C_N}{N} - 2 \ln (2\pi) - \frac{4 G_N}{N} \right) \|u\|_2^2 + \int_{\mathbb{R}^N} \ln |\xi|^2 |\mathcal{F}(u)|^2 ~dx - \frac{4}{N} \|u\|_2^2 \ln \|u\|_2\\
        & = \int_{\mathbb{R}^N} \ln |\xi|^2 |\mathcal{F}(u)|^2 ~dx - \left(B_{N,\mathcal{L}} + \frac{4}{N} \ln \|u\|_2 \right) \|u\|_2^2
    \end{split}
    \end{equation*}
where we have used that $B_{N, \mathcal{L}} = \frac{4}{N} G_N + 2\ln (2\pi) -\frac{2C_N}{N}.$ Moreover, the characterization of the extremal functions follows from Lemmas \ref{lem:Pitt's ineq} and \ref{Beckner-ineq}. \qed
\subsection{Logarithmic Laplacian Choquard problem}

Next, we find the explicit constant $c$ in $w_{s,t}$ defined in \eqref{talenti-def-funct}.  This plays a crucial role in the deriving the asymptotics of the solution $w_{s,t}$ and further leads to the exact solution of the problem \eqref{prob:log-choq}. 
\begin{lemma}\label{crit-choq-solu}
Let $s \in (0, \frac{1}{4}).$ Then, the function $w_{s,t}(x) = B_{N,s} U_{s,t}$ satisfies \eqref{fractional-Choquard-prob} and 
    \begin{equation}\label{frac-choquard-constant}
     \|w_{s,t}\|_2 = (B_{N,s}) \|U_{s,t}\|_2 =(t^{2s} A_{N,s})^\frac{1}{2} \quad \text{with} \quad B_{N,s} = \left(\frac{2^{2s}\Gamma\left(\frac{N+2s}{2}\right) \Gamma\left(N-2s\right)}{\pi^{\frac{N}{2}} \Gamma\left(\frac{N-2s}{2}\right)\Gamma\left(\frac{N-4s}{2}\right)} \right)^\frac{1}{2}.
    \end{equation} 
\end{lemma}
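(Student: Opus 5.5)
We compute both sides of \eqref{fractional-Choquard-prob} for $U_{s,t}$ explicitly, using two classical integral identities; matching them fixes the constant, and an $L^2$ computation gives the norm. By translation invariance we may take $x_0=0$.

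\textbf{Step 1: the left-hand side.} The standard formula for the fractional Laplacian of the bubble reads
\[
(-\Delta)^s U_{s,t} = 2^{2s}\frac{\Gamma\left(\frac{N+2s}{2}\right)}{\Gamma\left(\frac{N-2s}{2}\right)}\, t^{2s}\left(\frac{1}{t^2+|x|^2}\right)^{2s} U_{s,t}.
\]
It follows from $(-\Delta)^s(1+|x|^2)^{-\frac{N-2s}{2}} = 2^{2s}\frac{\Gamma(\frac{N+2s}{2})}{\Gamma(\frac{N-2s}{2})}(1+|x|^2)^{-\frac{N+2s}{2}}$ together with the scaling $x\mapsto x/t$.

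\textbf{Step 2: the convolution.} Since $U_{s,t}^2 = t^{N-2s}(t^2+|x|^2)^{-(N-2s)}$, we apply the classical identity
\[
\int_{\mathbb{R}^N}\frac{dy}{|x-y|^{\lambda}(1+|y|^2)^{N-\lambda/2}} = \pi^{N/2}\frac{\Gamma\left(\frac{N-\lambda}{2}\right)}{\Gamma\left(N-\frac{\lambda}{2}\right)}(1+|x|^2)^{-\lambda/2},
\]
which is the Lieb conformal identity obtained by stereographic projection. We use it with $\lambda=4s$, so that $N-\lambda/2 = N-2s$ matches the exponent, and we need $4s<N$, which holds. After rescaling in $t$ this gives
\[
|x|^{-4s}\ast U_{s,t}^2 = \pi^{N/2}\frac{\Gamma\left(\frac{N-4s}{2}\right)}{\Gamma(N-2s)}\, t^{2s}(t^2+|x|^2)^{-2s}.
\]
Here the powers of $t$ work out as $t^{N-2s}\cdot t^{N-4s}\cdot t^{-2(N-2s)}\cdot t^{2s}$ times the profile, and I will double-check this bookkeeping. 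Consequently $w=cU_{s,t}$ solves \eqref{fractional-Choquard-prob} if and only if
\[
c^2\,\pi^{N/2}\frac{\Gamma\left(\frac{N-4s}{2}\right)}{\Gamma(N-2s)} = 2^{2s}\frac{\Gamma\left(\frac{N+2s}{2}\right)}{\Gamma\left(\frac{N-2s}{2}\right)},
\]
which is exactly $c=B_{N,s}$. This holds independently of $t$ because both sides carry the same factor $t^{2s}(t^2+|x|^2)^{-2s}$.

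\textbf{Step 3: the norm.} We compute
\[
\|U_{s,t}\|_2^2 = t^{N-2s}\int_{\mathbb{R}^N}(t^2+|x|^2)^{-(N-2s)}\,dx = t^{2s}\,\pi^{N/2}\frac{\Gamma\left(\frac{N}{2}-2s\right)}{\Gamma(N-2s)},
\]
using the Beta integral $\int_{\mathbb{R}^N}(1+|x|^2)^{-a}\,dx=\pi^{N/2}\Gamma(a-\frac{N}{2})/\Gamma(a)$. This requires $a=N-2s>N/2$, i.e.\ $s<N/4$, which is fine. Multiplying by $B_{N,s}^2$, the factors $\pi^{N/2}$, $\Gamma(N-2s)$ and $\Gamma(\frac{N-4s}{2})$ cancel, so $\|w_{s,t}\|_2^2=t^{2s}A_{N,s}$ with
\[
A_{N,s}=2^{2s}\frac{\Gamma\left(\frac{N+2s}{2}\right)}{\Gamma\left(\frac{N-2s}{2}\right)}.
\]
The statement leaves $A_{N,s}$ implicit, so this is presumably its definition.

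\textbf{Main obstacle.} The main risk is the convolution identity in Step 2. It must be quoted in the correct normalization: the Riesz-kernel convolution with the bubble raised to the "conjugate" power. The $t$-scaling must also be handled carefully. If a citation is unavailable, I would derive the identity via stereographic projection, using $|x-y|$ in terms of $|\xi-\eta|$ on $\mathbb{S}^N$ and the Funk–Hecke integral of $|\xi-\eta|^{-\lambda}$ over the sphere.
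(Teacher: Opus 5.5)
Your proof is correct, but it reaches $B_{N,s}$ by a different route from the paper.

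\textbf{What is shared.} The paper has your Step~1: it obtains $(-\Delta)^s U_{s,t}=A_{N,s}U_{s,t}^{2_s^\ast-1}$ from the Chen--Li--Ou bubble, and your $A_{N,s}$ is exactly the constant in \eqref{fractional-Sobo-crit}. It also has your Step~3, in Lemma~\ref{lem:norm-compu}.

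\textbf{What differs.} The paper never evaluates $|x|^{-4s}\ast U_{s,t}^2$ pointwise. It presupposes that $cU_{s,t}$ solves \eqref{fractional-Choquard-prob} for some $c>0$ and tests the equation against $U_{s,t}$. It then uses that $U_{s,t}$ is an extremal of the sharp HLS inequality \eqref{HLS-inequality} to get $A_{N,s}\|U_{s,t}\|_{2_s^\ast}^{2_s^\ast}=c^2C_{N,s}\|U_{s,t}\|_{2_s^\ast}^{4}$. Hence $c^2=A_{N,s}/(C_{N,s}\|U_{s,t}\|_{2_s^\ast}^{4-2_s^\ast})$. The two computations agree, because $C_{N,s}\|U_{s,t}\|_{2_s^\ast}^{4-2_s^\ast}=\pi^{N/2}\Gamma(\frac{N-4s}{2})/\Gamma(N-2s)$, which is exactly the constant in your convolution identity.

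\textbf{What each route buys.} Yours gives a genuine pointwise verification that $B_{N,s}U_{s,t}$ solves the equation. The paper's integrated identity only yields a necessary condition on $c$. For the existence of an admissible $c$ it relies on outside input: the HLS Euler--Lagrange equation $|x|^{-4s}\ast U_{s,t}^2=\kappa\, U_{s,t}^{2_s^\ast-2}$, together with the cited classification. In return, the paper avoids the explicit convolution formula and needs only the value of the sharp HLS constant, whose computation rests on essentially the same integral.

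\textbf{Your flagged concerns.} The normalization you were worried about is right. At $x=0$ a polar-coordinate Beta integral gives exactly $\pi^{N/2}\Gamma(\frac{N-\lambda}{2})/\Gamma(N-\frac{\lambda}{2})$. Your scaled formula $|x|^{-4s}\ast U_{s,t}^2=\pi^{N/2}\frac{\Gamma(\frac{N-4s}{2})}{\Gamma(N-2s)}\,t^{2s}(t^2+|x|^2)^{-2s}$ is also correct. However, the tally of powers you wrote should end with $t^{4s}$, not $t^{2s}$, since $(1+|x|^2/t^2)^{-2s}=t^{4s}(t^2+|x|^2)^{-2s}$.
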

\begin{proof}
It follows from \cite[Theorem 1.1]{Chen-Li-Ou_2006} and \cite[(1.7)]{Chen-Zhou_2025} that the function $v:\mathbb{R}^N \to \mathbb{R}$ defined by
\begin{equation}\label{Sob-critical-bubble-solu}
v(x) = b_{N,s} U_{s,t}(x), \qquad \text{where} \qquad
b_{N,s} = 2^{\frac{N-2s}{2}}
\left(\frac{\Gamma\left(\frac{N+2s}{2}\right)}
{\Gamma\left(\frac{N-2s}{2}\right)}\right)^\frac{N-2s}{4s},
\end{equation}
is a positive solution of
\[
(-\Delta)^s v = v^{2_s^\ast-1}
\qquad \text{in } \mathbb{R}^N \quad \ \text{where} \quad 2_s^\ast = \frac{2N}{N-2s}.
\]
Consequently, for every $x_0 \in \mathbb{R}^N$ and $t>0$, the function $U_{s,t}$ satisfies
\begin{equation}\label{fractional-Sobo-crit}
(-\Delta)^s U_{s,t} = A_{N,s} U_{s,t}^{2_s^\ast-1}
\quad \text{in } \mathbb{R}^N,
\qquad \text{where} \qquad
A_{N,s}
:=
\frac{2^{2s}\Gamma\left(\frac{N+2s}{2}\right)}
{\Gamma\left(\frac{N-2s}{2}\right)}.
\end{equation}
Now, by using \eqref{fractional-Choquard-prob}-\eqref{talenti-def-funct} and \eqref{fractional-Sobo-crit}, we have
   \[
   A_{N,s} U_{s,t}^{2_s^\ast-1} = (-\Delta)^s  U_{s,t} =  c^2 (|x|^{-4s} \ast U_{s,t}^2) U_{s,t}.
   \]
Muliplying the equation by $U_{s,t}$, integrating over $\mathbb{R}^N$ and using Theorem \ref{HLS-inequality}, we obtain
\begin{equation}\label{est:crit-choq-1}
    A_{N,s} \|U_{s,t}\|_{2_s^\ast}^{2_s^\ast} = c^2 \int_{\mathbb{R}^N} (|x|^{-4s} \ast U_{s,t}^2) U_{s,t}^2 ~dx = c^2 C_{N,s} \|U_{s,t}\|_{2_s^\ast}^4 \quad \Longrightarrow \quad c^2 = \frac{A_{N,s}}{C_{N,s} \|U_{s,t}\|_{2_s^\ast}^{4-2_s^\ast}}.
\end{equation}
Finally, by using \eqref{fractional-Sobo-crit}, \eqref{HLS-inequality}, and Lemma \ref{lem:norm-compu} in \eqref{est:crit-choq-1}, we obtain the required claim.
\end{proof}
\noindent 
\textbf{Proof of Theorem \ref{thm:bubble-solution}:}  From Lemma \ref{crit-choq-solu}, note that the function $w_{s,t} = B_{N,s} U_{s,t}$ is a solution of \eqref{fractional-Choquard-prob}. Fixing $t>0$, by Taylor's expansion Theorem, we have
    \begin{equation}\label{est:Taylor-1}
        U_{s,t} = U_{0,t} - \frac{2s}{N} U_{0,t} \ln U_{0,t} + s^2 \tilde{U}_{s, t} \quad \text{in} \ \mathbb{R}^N
    \end{equation}
    where $U_{s,t}$ is defined in \eqref{talenti-def-funct} and 
    \[
    \tilde{U}_{s,t} := \frac{2}{N^2}U_{s',t} \ (\ln U_{0,t})^2, \ \text{for some} \ s' \in [0,s]
    \]
and satisfies 
    \begin{equation}\label{est:regularity}
        \limsup_{s \to 0^+} \left[ \sup_{x \in \mathbb{R}^N} \left(|\tilde{U}_{s,t}(x)| + |\nabla \tilde{U}_{s,t}(x)|\right) + \||\tilde{U}_{s,t}| + |\nabla \tilde{U}_{s,t}|\|_{q} \right] < +\infty, \quad \text{for any} \ q>1.
    \end{equation}
Now, by using Lemma \ref{lem:constants} and \eqref{est:Taylor-1}, we obtain   
\begin{equation}\label{est:solu-taylor}
    \begin{split}
w_{s,t} & = \left( B_{N,0} + s B_{N,1} + o(s) \right) \left(U_{0,t} - \frac{2s}{N} U_{0,t} \ln U_{0,t} + s^2 \tilde{U}_{s, t}\right)\\
& = B_{N,0} U_{0,t} + s \left(B_{N,1} U_{0,t} - \frac{2}{N} B_{N,0} U_{0,t} \ln U_{0,t} \right) + s^2 \hat{u}_{s,t} 
\end{split}
\end{equation}
where $\hat{u}_{s,t}$ satisfies property as \eqref{est:regularity}. Then, by using \cite[Theorem 1.1]{Chen-Weth_2019} and applying \cite[Theorem 3.1]{Chen-Zhou_2025} with $u_s = w_{s,t}$, $u_0 =B_{N,0} U_{0,t}$ and $u_1 = B_{N,1} U_{0,t} - \frac{2}{N} B_{N,0} U_{0,t} \ln U_{0,t} $, uniformly for any $x \in \mathbb{R}^N$, we obtain
\begin{equation}\label{est:fractional-solu}
    (-\Delta)^s w_{s,t}(x) = B_{N,0} U_{0,t} + s \left(B_{N,1} U_{0,t} - \frac{2}{N} B_{N,0} U_{0,t} \ln U_{0,t} + B_{N,0} \mathcal{L}_\Delta U_{0,t} \right) + o(s).
\end{equation}
From \eqref{est:solu-taylor}, we have
\begin{equation}\label{est:solu-square}
      w_{s,t}^2 = B_{N,0}^2 U_{0,t}^2 + 2 s \left[ B_{N,0} B_{N,1} U_{0,t}^2 - \frac{2}{N} B_{N,0}^2 U_{0,t}^2 \ln U_{0,t} \right] + o(s).
\end{equation}
By Taylor's theorem and using \eqref{est:solu-square}, we get
\begin{equation}\label{est:convolution-term}
    \begin{split}
        |x|^{-4s} \ast w_{s,t}^2 & = \left(1 + s \ln \left(\frac{1}{|x|^4}\right) + o(s)\right) \ast \left(B_{N,0}^2 U_{0,t}^2 + 2 s \left[ B_{N,0} B_{N,1} U_{0,t}^2 - \frac{2}{N} B_{N,0}^2 U_{0,t}^2 \ln U_{0,t} \right] + o(s)\right)\\
        & = B_{N,0}^2 \|U_{0,t}\|^2_2 \\
        & \quad + s \left[ B_{N,0}^2 \ln \left(\frac{1}{|x|^4}\right) \ast U_{0,t}^2 + 2 B_{N,0} B_{N,1} \|U_{0,t}\|^2_2 - \frac{4 B_{N,0}^2}{N} \left(\int_{\mathbb{R}^N} U_{0,t}^2 \ln U_{0,t} ~dy\right) \right] + o(s).
    \end{split}
\end{equation}
Now, by multiplying \eqref{est:convolution-term} by $w_{s,t}$ and using \eqref{est:solu-taylor}, we obtain
\begin{equation}\label{product-terms}
    \begin{split}
    \left(|x|^{-4s} \ast w_{s,t}^2 \right) w_{s,t} & = B_{N,0}^2 \|U_{0,t}\|^2_2 \left(B_{N,0} U_{0,t} + s \left(B_{N,1} U_{0,t} - \frac{2}{N} B_{N,0} U_{0,t} \ln U_{0,t} \right) + s^2 \hat{u}_{s,t} \right) \\
        & \quad + s \left[ B_{N,0}^2 \ln \left(\frac{1}{|x|^4}\right) \ast U_{0,t}^2 + 2 B_{N,0} B_{N,1} \|U_{0,t}\|^2_2 - \frac{4 B_{N,0}^2}{N} \left(\int_{\mathbb{R}^N} U_{0,t}^2 \ln U_{0,t} ~dy\right) \right] \\
        & \qquad \qquad \times \left(B_{N,0} U_{0,t} + s \left(B_{N,1} U_{0,t} - \frac{2}{N} B_{N,0} U_{0,t} \ln U_{0,t} \right) + s^2 \hat{u}_{s,t} \right) + o(s)\\
        & = B_{N,0}^3 \|U_{0,t}\|^2_2 U_{0,t} + s B_{N,0}^2 \|U_{0,t}\|^2_2 \left[ B_{N,1} U_{0,t} - \frac{2}{N} B_{N,0} U_{0,t} \ln U_{0,t}\right] \\
         & \qquad + s \left[ B_{N,0}^3 \left(\ln \left(\frac{1}{|x|^4}\right) \ast U_{0,t}^2\right) U_{0,t} \right] \\
        & \qquad + s  \left[2 B_{N,0}^2 B_{N,1} \|U_{0,t}\|^2_2 U_{0,t} - \frac{4 B_{N,0}^3}{N} \left(\int_{\mathbb{R}^N} U_{0,t}^2 \ln U_{0,t} ~dy\right) U_{0,t}\right] + o(s).\\
\end{split}
\end{equation}
Further, by Lemma \ref{constants-value} in \eqref{product-terms}, we obtain
\begin{equation}\label{est:convolution-nonlinearities}
    \begin{split}
        & \left(|x|^{-4s} \ast w_{s,t}^2 \right) w_{s,t} \\
        & \quad = B_{N,0} U_{0,t} + s \left[ B_{N,1} U_{0,t} - \frac{2}{N} B_{N,0} U_{0,t} \ln U_{0,t}\right] + s  \left[2 \frac{B_{N,1}}{B_{N,0}}  + \frac{4}{N} \ln (B_{N,0})\right] (B_{N,0}  U_{0,t})   \\
        & \quad \qquad + s \left[ \left(\ln \left(\frac{1}{|x|^4}\right) \ast (B_{N,0} U_{0,t})^2 - \frac{4}{N} \ast (B_{N,0} U_{0,t})^2 \ln (B_{N,0} U_{0,t})) \right) B_{N,0} U_{0,t} \right] + o(s).
    \end{split}
\end{equation}
Now, by using \eqref{fractional-Choquard-prob} and combining \eqref{est:fractional-solu} and \eqref{est:convolution-nonlinearities}, dividing  by $s$ and  passing limit $s \to 0^+$, we obtain, the function $u_{\sigma,t}$ defined in \eqref{extremals}, is a solution of the problem \eqref{main:prob} with $\sigma = B_{N, \mathcal{L}}.$ Finally, set $v = \alpha u_{\beta}$ where $u_\beta$ is the solution of \eqref{main:prob} for some $\sigma=\beta \in \mathbb{R}$ . Then, for any $x \in \mathbb{R}^N$, we obtain
\begin{equation}\label{main:prob-modi}
\begin{split}
\mathcal{L}_\Delta v = \alpha \mathcal{L}_\Delta u_\beta &= \beta (\alpha u_\beta) + \frac{4 \ln |\alpha|}{N} (\alpha u_\beta) + \frac{1}{\|\alpha u_\beta\|_2^2} \left[G_{\ln}(\alpha u_\beta) -  \frac{4}{N}\left(\int_{\mathbb{R}^N} (\alpha u_\beta)^2 \ln |\alpha u_\beta| ~dx\right) \right] \alpha u_\beta \\
& = \left(\beta + \frac{4 \ln |\alpha|}{N}\right) v + \frac{1}{\|v\|_2^2} \left[G_{\ln}(v) -  \frac{4}{N}\left(\int_{\mathbb{R}^N} v^2 \ln |v| ~dx\right) \right] v.
\end{split}
    \end{equation}
Therefore, for any $\sigma \in \mathbb{R}$, there exists a $\beta \in \mathbb{R}$ such that the function $v := \alpha(\beta) u_\beta$ is a solution of the problem \eqref{main:prob}
with $\alpha(\beta) = e^{\frac{N}{4}\left(\sigma - \beta\right)}.$ Finally, by taking $\beta= B_{N,\mathcal{L}}$, we obtain the required claim. 
\qed

\section{Radial Symmetry}\label{radial-mono}
In this section, we first derive the translation, dilation and Kelvin transform invariant properties of the modified problem \eqref{main:prob}. After that, by applying the moving plane method, we derive the radial symmetry and monotonicity propertes of the classical solution of the problem \eqref{main:prob}.
\subsection{Invariance properties}
We recall the following identity satisfied by the Euclidean norm:
\begin{equation}\label{pointwise:eq}
    \left|\frac{X}{|X|^2}- \frac{Y}{|Y|^2}\right| = \frac{|X-Y|}{|X| |Y|}  \quad \ \text{for any} \ X, Y \in \mathbb{R}^N \setminus \{0\}.
\end{equation}
Denote for a fixed $\bar{x}\in \mathbb{R}^N$
\[
x^{\ast,r} = \frac{r^2 (x-\tilde{x})}{|x-\tilde{x}|^2} + \tilde{x}, \quad \text{for} \ x \in \mathbb{R}^N \setminus \{\tilde{x}\}.
\]
\begin{proposition}
    The following holds:
    \begin{enumerate}
        \item[$(i)$] \begin{equation}\label{est:iden-1}
            |x-y| = \frac{1}{r^2} |x^{\ast,r} - y^{\ast,r} | |y-\tilde{x}||x-\tilde{x}| =  \frac{|x^{\ast,r} - y^{\ast,r}| |x-\tilde{x}|}{|y^{\ast,r} -\tilde{x}|}, \quad \text{where} \quad x, y \in \mathbb{R}^N \setminus \{\tilde{x}\}.
        \end{equation}
        \item[$(ii)$] 
        \begin{equation}\label{est:iden-2}
          |x-y| = r^2 \frac{|\xi-\eta|}{|\xi-\tilde{x}||\eta-\tilde{x}|},  \quad \text{where} \quad \xi= x^{\ast,r},  \eta= y^{\ast,r} \ \text{and}\ x, y \in \mathbb{R}^N \setminus \{\tilde{x}\}.  
        \end{equation} 
    \end{enumerate}
\end{proposition}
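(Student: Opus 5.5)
The plan is to reduce both identities to the elementary inversion identity \eqref{pointwise:eq} by translating and rescaling. Set $X = x-\tilde{x}$ and $Y = y-\tilde{x}$, both nonzero by assumption. The definition of the Kelvin point gives
\[
x^{\ast,r}-\tilde{x} = \frac{r^2 X}{|X|^2}, \qquad y^{\ast,r}-\tilde{x} = \frac{r^2 Y}{|Y|^2}.
\]
Subtracting these and applying \eqref{pointwise:eq} then yields
\[
|x^{\ast,r}-y^{\ast,r}| = r^2\left|\frac{X}{|X|^2}-\frac{Y}{|Y|^2}\right| = \frac{r^2|x-y|}{|x-\tilde{x}||y-\tilde{x}|}.
\]
Rearranging gives the first equality in \eqref{est:iden-1}. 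For the second equality, I will use $|y^{\ast,r}-\tilde{x}| = r^2/|y-\tilde{x}|$, which follows directly from the displayed formula for $y^{\ast,r}-\tilde{x}$. Substituting this turns $\frac{1}{r^2}|y-\tilde{x}|$ into $1/|y^{\ast,r}-\tilde{x}|$.

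For \eqref{est:iden-2}, the key observation is that the map $z\mapsto z^{\ast,r}$ is an involution on $\mathbb{R}^N\setminus\{\tilde{x}\}$. To see this, compute $(z^{\ast,r})^{\ast,r}$: with $Z=z-\tilde{x}$ we get
\[
\frac{r^2 (r^2 Z/|Z|^2)}{r^4/|Z|^2}+\tilde{x} = z.
\]
Consequently $\xi=x^{\ast,r}$ and $\eta=y^{\ast,r}$ are both different from $\tilde{x}$ and satisfy $\xi^{\ast,r}=x$ and $\eta^{\ast,r}=y$. Applying part $(i)$ with the roles of $(x,y)$ and $(\xi,\eta)$ exchanged gives $|\xi-\eta| = \frac{1}{r^2}|x-y||\xi-\tilde{x}||\eta-\tilde{x}|$, and solving for $|x-y|$ yields \eqref{est:iden-2}. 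As a check, one can also obtain it by inserting $|x-\tilde{x}| = r^2/|\xi-\tilde{x}|$ and $|y-\tilde{x}| = r^2/|\eta-\tilde{x}|$ into the first form of \eqref{est:iden-1}.

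There is no real obstacle here; the computation is elementary. If \eqref{pointwise:eq} were not taken as given, its verification would be the only substantive step. That step consists of expanding
\[
\left|\frac{X}{|X|^2}-\frac{Y}{|Y|^2}\right|^2 = \frac{1}{|X|^2}-\frac{2X\cdot Y}{|X|^2|Y|^2}+\frac{1}{|Y|^2} = \frac{|X-Y|^2}{|X|^2|Y|^2}.
\]
The one point to keep track of is that every denominator requires $x,y\neq\tilde{x}$, which is exactly the hypothesis of the proposition.
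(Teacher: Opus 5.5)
Your proof is correct and matches the paper's: part $(i)$ comes from applying \eqref{pointwise:eq} to $X=x-\tilde{x}$, $Y=y-\tilde{x}$, together with $|y^{\ast,r}-\tilde{x}|=r^2/|y-\tilde{x}|$. For part $(ii)$ the paper substitutes $|\xi-\tilde{x}|=r^2/|x-\tilde{x}|$ and $|\eta-\tilde{x}|=r^2/|y-\tilde{x}|$ directly, which is your ``check''; your involution argument is an equivalent repackaging of the same computation.
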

\begin{proof} By taking $X= x-\tilde{x}$ and $Y= y-\tilde{x}$ in \eqref{pointwise:eq}, we get
\begin{equation}\label{est-Kel-eq-1}
        |x^{\ast,r} - y^{\ast,r} | = r^2 \left|\frac{(x-\tilde{x})}{|x-\tilde{x}|^2}- \frac{(y-\tilde{x})}{|y-\tilde{x}|^2} \right| = r^2 \frac{|x-y|}{|x-\tilde{x}| |y-\tilde{x}|}.
\end{equation}
Hence, the claim in $(i)$ follows. Note that
\begin{equation}\label{est-Kel-eq-2}
    |\xi-\tilde{x}| = \frac{r^2}{|x-\tilde{x}|}, \quad  |\eta-\tilde{x}| = \frac{r^2}{|y-\tilde{x}|}.
\end{equation}
Using \eqref{est-Kel-eq-1} and \eqref{est-Kel-eq-2}, we obtain    
\[
|\xi-\eta| = |x^{\ast,r} - y^{\ast,r} | = r^2 \frac{|x-y|}{|x-\tilde{x}| |y-\tilde{x}|} = \frac{1}{r^2} |x-y| |\xi-\tilde{x}| |\eta-\tilde{x}|.
\]
Hence, the claim in $(ii)$ holds.
\end{proof}
\begin{proposition}\label{pro:invariance}
    Let $\sigma \in \mathbb{R}$ and $u \in \mathcal{D}^0(\mathbb{R}^N) \cap L_0^1(\mathbb{R}^N) \cap \mathbb{H}_{\ln}(\mathbb{R}^N)$ be a solution of 
    \begin{equation}\label{parameterized:prob}
     \begin{cases}
\mathcal{L}_\Delta u= \sigma u 
+  \frac{1}{\|u\|_2^2} \left[G_{\ln}(u) - \frac{4}{N} \int_{\mathbb{R}^N} u^2 \ln |u| ~dx \right] u & \text{in }\mathbb{R}^N,\\
u\ge 0 & \text{in }\mathbb{R}^N.
\end{cases}
    \end{equation}
\begin{enumerate}
    \item[$(i)$] Given $\lambda >0$ and $x_0 \in \mathbb{R}^N$, let $$w_\lambda(x) = \lambda u(x+x_0), \quad x \in \mathbb{R}^N.$$ Then, $w_\lambda$ satisfies  
        \[
\begin{cases}
\mathcal{L}_\Delta w_\lambda = \sigma w_\lambda + \frac{1}{\|w_\lambda\|_2^2}\left[ G_{\ln}(w_\lambda) - \frac{4}{N}\left(\int_{\mathbb{R}^N} w_\lambda^2 \ln w_\lambda~dx\right) \right] w_\lambda - \frac{4 \ln \lambda}{N} w_\lambda(x) & \text{in }\mathbb{R}^N,\\
w_\lambda \ge 0 & \text{in }\mathbb{R}^N.
\end{cases}
\]
\item[$(ii)$] Given $\lambda >0$, let $$v_\lambda(x) = \lambda^{-\frac{N}{2}} u(x/\lambda), \quad x \in \mathbb{R}^N.$$ Then, $v_\lambda$ satisfies \eqref{parameterized:prob}. 
In addition, $\|u\|_2 = \|v_\lambda\|_2$ for every $\lambda >0.$
\item[$(iii)$] Let $\tilde{x} \in \mathbb{R}^N$, $r>0$ and 
\begin{equation}\label{def:Kelvin-trans}
    u^{\#}_r (x) = \left(\frac{r}{|x-\tilde{x}|}\right)^N u(x^{\ast,r}).
\end{equation}
Then, $u^{\#}_r$ satisfies
\[
\mathcal{L}_\Delta u^{\#}_r = \sigma u^{\#}_r  + \frac{1}{\|u^{\#}_r\|_2^2}\left[ G_{\ln}(u^{\#}_r) - \frac{4}{N} \int_{\mathbb{R}^N} (u^{\#}_r)^2 \ln |u^{\#}_r| ~dx  \right] u^{\#}_r \quad \text{in} \ \mathbb{R}^N \setminus \{\tilde{x}\}.
\]
\item[$(iv)$] Let $Q\in O(N)$ be an orthogonal matrix and define
$$
v(x):=u(Qx),\qquad x\in\mathbb{R}^N.
$$
Then $v$ satisfies \eqref{parameterized:prob}. In particular, the equation is invariant under rotations and reflections.
\end{enumerate}
\end{proposition}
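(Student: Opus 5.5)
We verify each of the four invariances by tracking separately how $\mathcal{L}_\Delta$, the potential $G_{\ln}$, the entropy term and the $L^2$ norm transform. The operator facts we need all follow from the Fourier symbol $2\ln|\xi|$ in \eqref{main:operator}, or equivalently from the kernel representation. Translation and rotation commute with $\mathcal{L}_\Delta$. For the dilation $u_\lambda(x)=u(x/\lambda)$ we have
\[
\mathcal{L}_\Delta u_\lambda=(\mathcal{L}_\Delta u)(\cdot/\lambda)-2\ln\lambda\,u_\lambda .
\]
For the Kelvin transform \eqref{def:Kelvin-trans} we expect the logarithmic conformal covariance
\[
\mathcal{L}_\Delta u^{\#}_r(x)=\Bigl(\tfrac{r}{|x-\tilde x|}\Bigr)^N\Bigl[(\mathcal{L}_\Delta u)(x^{\ast,r})+2\ln\tfrac{|x-\tilde x|}{r}\,u(x^{\ast,r})\Bigr],
\]
which is the $s\to0$ derivative of the conformal covariance $(-\Delta)^s u^{\#}=(r/|x-\tilde x|)^{N+2s}((-\Delta)^s u)(x^{\ast,r})$ of the fractional Laplacian. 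It is also available in \cite{Chen-Zhou_2025} and \cite{Fernandez-Saldana-2025}, so we will invoke it rather than re-derive it.

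\textbf{Parts (i), (ii) and (iv).} For (i), translation invariance is immediate, since $G_{\ln}$ is a convolution and the integrals are translation invariant. The amplitude scaling repeats the computation \eqref{main:prob-modi}. We have $\|w_\lambda\|_2^2=\lambda^2\|u\|_2^2$ and $G_{\ln}(w_\lambda)=\lambda^2G_{\ln}(u)(\cdot+x_0)$, while
\[
\int w_\lambda^2\ln w_\lambda=\lambda^2\int u^2\ln u+\lambda^2\ln\lambda\,\|u\|_2^2 .
\]
After normalization this produces the extra term $-\frac{4\ln\lambda}{N}w_\lambda$. For (ii), set $v_\lambda=\lambda^{-N/2}u(\cdot/\lambda)$, so $\|v_\lambda\|_2=\|u\|_2$. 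The dilation rule gives a shift $-2\ln\lambda\, v_\lambda$ on the left-hand side. On the right-hand side:
\begin{itemize}
\item $G_{\ln}(v_\lambda)(x)=G_{\ln}(u)(x/\lambda)-4\ln\lambda\,\|u\|_2^2$;
\item $\int v_\lambda^2\ln v_\lambda=\int u^2\ln u-\frac N2\ln\lambda\,\|u\|_2^2$.
\end{itemize}
The bracket therefore shifts by $(-4+2)\ln\lambda\,\|u\|_2^2=-2\ln\lambda\,\|u\|_2^2$. Dividing by $\|u\|_2^2$, this matches the left shift exactly, which is precisely where the critical coefficient $4/N$ is used. Part (iv) follows from the rotation invariance of the kernel, of the Lebesgue measure and of $|x-y|$.

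\textbf{Part (iii).} The change of variables $y=\eta^{\ast,r}$ has Jacobian $dy=(r/|\eta-\tilde x|)^{2N}d\eta$, so $u^{\#}_r(y)^2dy=u(\eta)^2d\eta$. Hence $\|u^{\#}_r\|_2=\|u\|_2$. Using \eqref{est:iden-2} we obtain
\[
\ln|x-y|=\ln|x^{\ast,r}-\eta|+2\ln r-\ln|x^{\ast,r}-\tilde x|-\ln|\eta-\tilde x|,
\]
and therefore
\[
G_{\ln}(u^{\#}_r)(x)=G_{\ln}(u)(x^{\ast,r})+4\ln\tfrac{|x-\tilde x|}{r}\|u\|_2^2+4\int u^2(\eta)\ln\tfrac{|\eta-\tilde x|}{r}\,d\eta ,
\]
where we used $|x^{\ast,r}-\tilde x|=r^2/|x-\tilde x|$. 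For the entropy, with $y=\eta^{\ast,r}$ we get $\ln u^{\#}_r(y)=N\ln\frac{|\eta-\tilde x|}{r}+\ln u(\eta)$, so
\[
\int (u^{\#}_r)^2\ln u^{\#}_r=\int u^2\ln u+N\int u^2\ln\tfrac{|\eta-\tilde x|}{r}.
\]
Multiplied by $-4/N$, this cancels exactly the last integral in $G_{\ln}(u^{\#}_r)$. What remains after normalization is $4\ln\frac{|x-\tilde x|}{r}$ times $u^{\#}_r$. The covariance law contributes $2\ln\frac{|x-\tilde x|}{r}$ on the left, while the factor $(r/|x-\tilde x|)^N$ pulls out of the equation for $u$ evaluated at $x^{\ast,r}$. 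These two do not match as written, so the normalization of the kernel must be rechecked. The interaction kernel has exponent $4$, giving a factor $4$, while the operator symbol $2\ln|\xi|$ gives a factor $2$. The entropy term then has to absorb the difference; its pointwise analogue would be $\ln u^{\#}=\ln u(x^{\ast,r})+N\ln\frac{r}{|x-\tilde x|}$. Since our entropy term is global rather than pointwise, I expect the constants to balance only after carefully redoing the covariance constant (possibly the symbol coefficient is effectively $4$ in the dilation and Kelvin laws). This bookkeeping is the main obstacle.

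A secondary obstacle is justifying the covariance identity for $u$ that is merely Dini continuous and in $L^1_0$, since $u^{\#}_r$ is singular at $\tilde x$; this is why (iii) is stated only on $\mathbb{R}^N\setminus\{\tilde x\}$. I would verify it pointwise directly from the kernel form of $\mathcal{L}_\Delta$ using \eqref{est:iden-1}. Alternatively, I would pass $s\to0^+$ in the fractional covariance via \cite[Theorem 1.1]{Chen-Weth_2019}, whose hypotheses are local and are satisfied away from $\tilde x$. All the integrals are finite because $u\in\mathbb{H}_{\ln}$ and by the Remark following Theorem \ref{thm-radi-mono-prop}.
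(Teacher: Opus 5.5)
Your treatment of (ii) and (iv) is correct and follows the paper. Your argument for (iii), however, does not close. The mismatch you found is not a normalization problem in the kernel or the symbol; it comes from two computational errors.

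First, the logarithmic Kelvin law you invoke is wrong. When you differentiate the fractional covariance at $s=0$, the function on the left is the $s$-dependent Kelvin transform $(r/|x-\tilde x|)^{N-2s}u(x^{\ast,r})=(r/|x-\tilde x|)^{-2s}u^{\#}_r(x)$, not $u^{\#}_r$. Its $s$-derivative produces $-2\ln\frac{r}{|x-\tilde x|}\,u^{\#}_r$ on the left, and you omitted this term. The weight $(r/|x-\tilde x|)^{N+2s}$ on the right produces $+2\ln\frac{r}{|x-\tilde x|}\,u^{\#}_r$, which you have with the wrong sign. Together they give
\[
\mathcal{L}_\Delta u^{\#}_r(x)=\Bigl(\frac{r}{|x-\tilde x|}\Bigr)^{N}(\mathcal{L}_\Delta u)(x^{\ast,r})-4\ln\Bigl(\frac{|x-\tilde x|}{r}\Bigr)u^{\#}_r(x),
\]
which is exactly the identity the paper imports from \cite[(2.4)]{Chen-Zhou_2025}. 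A quick check is the Kelvin-invariant bubble $(1+|x|^2)^{-N/2}$, for which your version with $+2\ln\frac{|x-\tilde x|}{r}$ fails.

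Second, your formula for $G_{\ln}(u^{\#}_r)$ has a sign error. From your own identity and $|x^{\ast,r}-\tilde x|=r^2/|x-\tilde x|$ one gets $\ln|x-y|=\ln|x^{\ast,r}-\eta|+\ln\frac{|x-\tilde x|}{r}-\ln\frac{|\eta-\tilde x|}{r}$, hence
\[
G_{\ln}(u^{\#}_r)(x)=G_{\ln}(u)(x^{\ast,r})-4\ln\Bigl(\frac{|x-\tilde x|}{r}\Bigr)\|u\|_2^2+4\int_{\mathbb{R}^N}u^2(\eta)\ln\frac{|\eta-\tilde x|}{r}\,d\eta .
\]

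With both corrections the argument closes as in the paper. Your (correct) entropy identity removes the last integral. So the normalized bracket for $u^{\#}_r$ equals the normalized bracket for $u$ at $x^{\ast,r}$ minus $4\ln\frac{|x-\tilde x|}{r}$. Substituting the equation for $u$ at $x^{\ast,r}$ into the covariance law, the resulting $+4\ln\frac{|x-\tilde x|}{r}\,u^{\#}_r$ cancels the $-4\ln\frac{|x-\tilde x|}{r}\,u^{\#}_r$ exactly. The exponent $4$ of the interaction kernel is matched by the $4$ in the Kelvin law. It is not matched by the symbol coefficient, which is correctly $2$ in your dilation law for (ii).

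A smaller point about (i): your own bookkeeping gives
\[
\frac{1}{\|w_\lambda\|_2^2}\Bigl[G_{\ln}(w_\lambda)-\frac4N\int_{\mathbb{R}^N} w_\lambda^2\ln w_\lambda\,dx\Bigr]=\frac{1}{\|u\|_2^2}\Bigl[G_{\ln}(u)(\cdot+x_0)-\frac4N\int_{\mathbb{R}^N} u^2\ln u\,dx\Bigr]-\frac{4\ln\lambda}{N},
\]
so the extra term is $+\frac{4\ln\lambda}{N}w_\lambda$. This is consistent with \eqref{main:prob-modi} and with the $\sigma$-dependence of $u_{\sigma,t}$. The minus sign in the statement, which you asserted rather than derived, is a typo.
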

\begin{proof}
    Let $u$ be a classical solution of \eqref{parameterized:prob} and $\sigma \in \mathbb{R}$. Note that
    \[
    (L_\Delta w_\lambda) (x) = \lambda (L_\Delta u) (x+x_0), \quad G_{\ln} (w_\lambda)(x) = \lambda^2 G_{\ln} (u)(x+x_0),
    \]
    and 
 \[
    \int_{\mathbb{R}^N} w_\lambda^2 \ln w_\lambda~dx = \lambda^2 \int_{\mathbb{R}^N} u^2 \ln u ~dx + \lambda^2 \ln \lambda \|u\|_2^2. 
    \]
    This further gives that the function $w_\lambda$ satsifes
    \[
    \begin{split}
        \mathcal{L}_\Delta w_\lambda (x) & = \lambda \mathcal{L}_\Delta u (x+x_0)= \lambda \sigma u (x+x_0) + \frac{\lambda}{\|u\|_2^2} \left[G_{\ln}(u) (x+x_0) -  \frac{4}{N} \int_{\mathbb{R}^N} u^2 \ln u ~dx\right] u (x+x_0)\\
        & = \sigma w_\lambda(x) + \frac{1}{\|w_\lambda\|_2^2} \left[ G_{\ln}(w_\lambda)(x) -  \frac{4}{N} \int_{\mathbb{R}^N} w_\lambda^2 \ln w_\lambda ~dx\right] w_\lambda (x) - \frac{4 \ln \lambda}{N} w_\lambda (x) \quad \text{for} \ x \in \mathbb{R}^N.
    \end{split}
    \]
Hence, the proof $(i)$. Next, we show $(ii)$. By straightforward computations, we have $\|u\|_2 = \|v_\lambda\|_2$ for every $\lambda >0.$ Note that, we have
\begin{align*}
\mathcal{F}(L_\Delta v_\lambda)(\xi) & = \bigl(2\ln |\xi|\bigr)\widehat{v_\lambda}(\xi) = \bigl(2\ln |\xi|\bigr)\lambda^{\frac{N}{2}}\widehat{u}(\lambda\xi) = \bigl(2\ln |\lambda\xi|\bigr)\lambda^{\frac{N}{2}}\widehat{u}(\lambda \xi)
   - \bigl(2\ln \lambda\bigr)\lambda^{\frac{N}{2}}\widehat{u}(\lambda \xi)
\end{align*}
where we used the fact that $\widehat{v_\lambda}(\xi) = \lambda^{\frac{N}{2}}\widehat{u}(\lambda\xi)$. Then, by applying the inverse Fourier transform, we obtain
\begin{equation}\label{est:inv-1}
    \mathcal{L}_\Delta v_\lambda (x) = \lambda^{-\frac{N}{2}} (\mathcal{L}_\Delta u)\left(\frac{x}{\lambda}\right) - 2 \ln \lambda ~ v_\lambda, 
\end{equation}
\begin{equation}\label{est:inv-1-2}
    \begin{split}
G_{\ln}(v_\lambda)(x) & = \int_{\mathbb{R}^N} \ln\left(\frac{1}{|x-y|^4}\right) v_\lambda^2 (y) ~dy = \lambda^{-N} \int_{\mathbb{R}^N} \ln\left(\frac{1}{|x-y|^4}\right) u^2 \left(\frac{y}{\lambda}\right) ~dy\\
& = \lambda^{-N} \int_{\mathbb{R}^N} \ln\left(\frac{1}{\left|\frac{x}{\lambda}- \frac{y}{\lambda}\right|^4}\right) u^2 \left(\frac{y}{\lambda}\right) ~dy + \lambda^{-N} \int_{\mathbb{R}^N} \ln\left(\frac{1}{\lambda^4}\right) u^2 \left(\frac{y}{\lambda}\right) ~dy \\
& = G_{\ln}(u)\left(\frac{x}{\lambda}\right) - 4 \ln \lambda \|v_\lambda\|_2^2
\end{split}
\end{equation}
and
\begin{equation}\label{est:inv-2}
\begin{split}
\int_{\mathbb{R}^N} v_\lambda^2 \ln v_\lambda ~dx &= \lambda^{-N} \int_{\mathbb{R}^N}  u^2\left(\frac{x}{\lambda}\right) \ln \left(u\left(\frac{x}{\lambda}\right)\right) ~dx - \frac{N}{2} \ln \lambda \|u\|_2^2 = \int_{\mathbb{R}^N} u^2 \ln u ~dx - \frac{N}{2} \ln \lambda \|v_\lambda\|_2^2
\end{split}
\end{equation}
where we have used the fact that $\|u\|_2 = \|v_\lambda\|_2$ for every $\lambda >0.$ Now, by combining estimates in \eqref{est:inv-1}, \eqref{est:inv-1-2} and \eqref{est:inv-2}, we obtain
\begin{equation*}
    \begin{split}
        \mathcal{L}_\Delta v_\lambda (x) &= \sigma v_\lambda + \frac{1}{\|u\|_2^2}\left[G_{\ln}(u)\left(\frac{x}{\lambda}\right) - \frac{4}{N}\left(\int_{\mathbb{R}^N} u^2 \ln u ~dx\right)\right] v_\lambda - 2 \ln \lambda ~ v_\lambda\\
        & = \sigma v_\lambda  - 2 \ln \lambda ~ v_\lambda\\
        & \qquad + \frac{1}{\|v_\lambda\|_2^2}\left[G_{\ln}(v_\lambda)(x) + 4 \ln \lambda \|v_\lambda\|_2^2 - \frac{4}{N}\left(\int_{\mathbb{R}^N} G_{\ln}(v_\lambda) v_\lambda^2 ~dx\right) - 2 \ln \lambda \|v_\lambda\|_2^2 \right] v_\lambda\\
        & = \sigma v_\lambda + \frac{1}{\|v_\lambda\|_2^2}\left[G_{\ln}(v_\lambda)(x) - \frac{4}{N}\left(\int_{\mathbb{R}^N} G_{\ln}(v_\lambda) v_\lambda^2 ~dx\right)\right] v_\lambda.
    \end{split}
\end{equation*}
Hence, the proof $(ii)$ follows. Next, we show $(iii)$. 
Setting $z= y^{\ast,r}$, the Jacobian of this inversion satisfies
\begin{equation}\label{est:change-of-variables}
    dy = \left(\frac{r}{|z-\tilde{x}|}\right)^{2N} dz, \quad \quad \frac{r}{|y-\tilde{x}|} = \frac{|z-\tilde{x}|}{r}.
\end{equation}
This further gives
\begin{equation}\label{est-l2-norm}
   \begin{split}
    \|u^{\#}_r\|_2^2 &= \int_{\mathbb{R}^N} \left(\frac{r}{|y-\tilde{x}|}\right)^{2N} u^2\left(\frac{r^2 (y-\tilde{x})}{|y-\tilde{x}|^2} + \tilde{x}\right) ~dy\\
    & = \int_{\mathbb{R}^N} \left(\frac{|z-\tilde{x}|}{r}\right)^{2N} u^2(z) \left(\frac{r}{|z-\tilde{x}|}\right)^{2N} ~dz = \|u\|_2^2, \quad \text{for every} \ r>0.
\end{split}
\end{equation}
Using \eqref{parameterized:prob} and \cite[(2.4)]{Chen-Zhou_2025}, for $x \in \mathbb{R}^N \setminus \{\tilde{x}\}$, we have
\begin{equation}\label{est:conv-term-0}
\begin{split}
    & \mathcal{L}_\Delta u^{\#}_r (x) = \left(\frac{r}{|x-\tilde{x}|}\right)^N (\mathcal{L}_\Delta u)(x^{\ast,r}) - 4 \ln \left(\frac{|x-\tilde{x}|}{r}\right) u^{\#}_r (x) \\
    & \qquad= \left(\sigma - 4 \ln \left(\frac{|x-\tilde{x}|}{r}\right)\right) u^{\#}_r (x)  +  \frac{1}{\|u\|_2^2} \left[G_{\ln}(u)(x^{\ast,r}) - \frac{1}{2 \|u\|_2^2}\left(\int_{\mathbb{R}^N} G_{\ln}(u) u^2 ~dx\right) \right] u^{\#}_r (x).
    \end{split}
\end{equation}
Next, we derive the estimate on the last term in \eqref{est:conv-term-0} as follows: 
By change of variables $z= y^{\ast, r}$, using \eqref{est:change-of-variables} and \ref{est:iden-1}, we obtain
\begin{equation}\label{est:conv-term-1}
    \begin{split}
        & G_{\ln}(u^{\#}_r)(x) = \int_{\mathbb{R}^N} \ln\left(\frac{1}{|x-y|^4}\right) (u^{\#}_r)^2(y) ~dy = \int_{\mathbb{R}^N} \left(\frac{r}{|y-\tilde{x}|}\right)^{2N} \ln\left(\frac{1}{|x-y|^4}\right) u^2\left(y^{\ast, r}\right) ~dy\\
        &= \int_{\mathbb{R}^N} \left(\frac{r}{|y-\tilde{x}|}\right)^{2N} \left[\ln\left(\frac{1}{|x-\tilde{x}|^4}\right) - \ln\left(\frac{1}{|y^{\ast, r}-\tilde{x}|^4}\right) + \ln\left(\frac{1}{|x^{\ast, r}-y^{\ast, r}|^4}\right) \right] u^2\left(y^{\ast, r}\right) ~dy\\
        & = \int_{\mathbb{R}^N} \ln \left( \frac{1}{|x^{\ast,r} -z|^4}\right) u^2(z) ~dz - 4 \ln |x-\tilde{x}| \|u^{\#}_r\|_2^2 + 4 \int_{\mathbb{R}^N} u^2(z) \ln |z - \tilde{x}|  ~dz\\
        & =  G_{\ln}(u)(x^{\ast,r}) 
- 4 \ln \left(\frac{|x-\tilde{x}|}{r}\right) \|u^{\#}_r\|_2^2 + 4\int_{\mathbb{R}^N} u^2(z) \ln|z - \tilde{x}|\,dz  - 4 \ln r \|u^{\#}_r\|_2^2.
    \end{split}
\end{equation}
Now, by applying change of variables via \eqref{est:change-of-variables}, we obtain
\begin{equation}\label{est:conv-term-new}
\begin{split}
    \int_{\mathbb{R}^N} (u^{\#}_r)^2 \ln |u^{\#}_r| ~dx & = \int_{\mathbb{R}^N} \left(\frac{r}{|x-\tilde{x}|}\right)^{2N} u^2(x^{\ast,r}) \ln \left|\left(\frac{r}{|x-\tilde{x}|}\right)^N u(x^{\ast,r})\right| ~dx\\
    & = \int_{\mathbb{R}^N}  u^2(z) \ln | u(z)| ~dz + N \int_{\mathbb{R}^N} u^2(z) \ln\left(\frac{|z-\tilde{x}|}{r}\right) ~dz.
\end{split}
\end{equation}
By collecting the estimates in \eqref{est:conv-term-1} and \eqref{est:conv-term-new} and using \eqref{est-l2-norm}, we obtain
\begin{equation}\label{est:conv-term-3}
    \begin{split}
        & \frac{1}{\|u\|_2^2} \left[G_{\ln}(u)(x^{\ast,r}) - \frac{4}{N}\left(\int_{\mathbb{R}^N} u^2 \ln |u| ~dx\right) \right] \\
        & = \frac{1}{\|u^{\#}_r\|_2^2} \left[G_{\ln}(u^{\#}_r)(x) - \frac{4}{N}\int_{\mathbb{R}^N} (u^{\#}_r)^2 \ln |u^{\#}_r| ~dx\right]\\
        & \quad + \frac{1}{\|u^{\#}_r\|_2^2} \left[4 \ln \left(\frac{|x-\tilde{x}|}{r}\right) \|u^{\#}_r\|_2^2 - 4\int_{\mathbb{R}^N} u^2(z) \ln|z - \tilde{x}|\,dz  + 4 \ln r \|u^{\#}_r\|_2^2 + 4 \int_{\mathbb{R}^N} u^2(z) \ln\left(\frac{|z-\tilde{x}|}{r}\right) ~dz\right] \\
        & =  \frac{1}{\|u^{\#}_r\|_2^2}\left[ G_{\ln}(u^{\#}_r) - \frac{4}{N} \int_{\mathbb{R}^N} (u^{\#}_r)^2 \ln |u^{\#}_r| ~dx  \right] + 4 \ln \left(\frac{|x-\tilde{x}|}{r}\right) .
    \end{split}
\end{equation}
Finally, by inserting the estimate \eqref{est:conv-term-3} in \eqref{est:conv-term-0}, we obtain
\[
\mathcal{L}_\Delta u^{\#}_r (x) = \sigma u^{\#}_r (x) + \frac{1}{\|u^{\#}_r\|_2^2}\left[ G_{\ln}(u^{\#}_r) - \frac{4}{N} \int_{\mathbb{R}^N} (u^{\#}_r)^2 \ln |u^{\#}_r| ~dx \right] u^{\#}_r.
\]
Next, we show $(iv).$ Let $Q\in O(N)$ and define $v(x):=u(Qx)$. Since $Q$ is orthogonal, $|\det Q|=1$ and $|Qx|=|x|$ for every $x\in\mathbb{R}^N$. By the change of variables $z=Qx$, we obtain $\|v\|_2^2 = \|u\|_2^2.$ Now, by taking $z=Qy$, we have $dy=dz$ and, since $Q$ is orthogonal, $|x-Q^{-1}z| = |Qx-z|.$ Consequently,
\[
G_{\ln}(v)(x) = \int_{\mathbb{R}^N}
\ln\left(\frac{1}{|x-Q^{-1}z|^4}\right)u^2(z)\,dz =
\int_{\mathbb{R}^N}
\ln\left(\frac{1}{|Qx-z|^4}\right)u^2(z)\,dz =
G_{\ln}(u)(Qx).
\]
This further implies 
\[
\int_{\mathbb{R}^N} v^2 \ln v\,dx =
\int_{\mathbb{R}^N}
u^2(Qx) \ln u(Qx)\,dx = \int_{\mathbb{R}^N}
u^2 \ln u \,dz.
\]
Moreover, noting that for any $\xi \in \mathbb{R}^N$, we have
\[
\widehat{\mathcal{L}_\Delta v}(\xi) = 2\ln|\xi|\,\widehat{u}(Q\xi) =
2\ln|Q\xi|\,\widehat{u}(Q\xi) =
\widehat{\mathcal{L}_\Delta u}(Q\xi).
\]
Hence, $\mathcal{L}_\Delta v(x)
= (\mathcal{L}_\Delta u)(Qx)$ and the required claim follows.
\end{proof}





\subsection{Moving plane method}
For $\lambda \in \mathbb{R}$, the reflection of the point $x \in \mathbb{R}^N$ about $T_\lambda:=\{x \in \mathbb{R}^N : x_1 = \lambda\}$ is defined as
\[
x^\lambda = (2\lambda -x_1, x_2, \cdots, x_n) 
\]
and 
\[
 H_\lambda:= \{x \in \mathbb{R}^N : x_1 < \lambda\}, \qquad  H^\lambda:= \{x \in \mathbb{R}^N : x^\lambda \in H_\lambda\}.
\]
\noindent 
\textbf{Proof of Theorem \ref{thm-radi-mono-prop}:}
Let $u$ be the positive solution of \eqref{main:prob} and $u^{\#}_r$ be the Kelvin transform of the $u$ centered at $\tilde{x}$ and $r>0.$ Then, by Proposition \ref{pro:invariance}, $u^{\#}_r$ satisfies 
\[
\begin{cases}
\mathcal{L}_\Delta u^{\#}_r = \sigma u^{\#}_r  + \frac{1}{\|u^{\#}_r\|_2^2}\left[ G_{\ln}(u^{\#}_r) - \frac{4}{N}\left(\int_{\mathbb{R}^N} (u^{\#}_r)^2 \ln u^{\#}_r ~dx\right) \right] u^{\#}_r \quad & \text{in} \ \mathbb{R}^N \setminus \{\tilde{x}\},\\
u^{\#}_r \ge 0 & \text{in }\mathbb{R}^N \setminus \{\tilde{x}\}.
\end{cases}
\]
From Proposition \ref{pro:invariance} $(i)$-$(ii)$, without loss of generality, we can take $\tilde{x} =0$ and $r=1$ and denote $u_1:= u^{\#}_1.$ For $\lambda<0$ and $x \in \mathbb{R}^N \setminus \{0\}$, denote
\[
u_{1, \lambda}(x) = u_1(x^\lambda) \quad \text{and} \quad w_\lambda(x) = u_{1,\lambda}(x) - u_1(x).
\]
Now, for any $x \in H_\lambda$, using the reflection identities
\[ (y^\lambda)^\lambda =y \quad \text{and} \quad |x-z^\lambda| = |x^\lambda-z|, \]
and by change of variables $z= y^\lambda$, we obtain
\[
\begin{split}
    G_{\ln} (u_{1, \lambda}) (x) &= \int_{\mathbb{R}^N} \ln \left(\frac{1}{|x-y|^4} \right) (u_{1, \lambda}(y))^2 ~dy = \int_{\mathbb{R}^N} \ln \left(\frac{1}{|x-y|^4} \right) (u_{1}(y^\lambda))^2 ~dy\\ 
    & = \int_{\mathbb{R}^N} \ln \left(\frac{1}{|x-z^\lambda|^4} \right) (u_{1}(z))^2 ~dz = \int_{\mathbb{R}^N} \ln \left(\frac{1}{|x^\lambda-z|^4} \right) (u_{1}(z))^2 ~dz = G_{\ln} (u_{1}) (x^\lambda).
\end{split}
\]
This further implies
\[
\|u_{1, \lambda}\|_2 = \|u_1\|_2 \quad \text{and} \quad \int_{\mathbb{R}^N} G_{\ln}(u_1) u_1^2 ~dx = \int_{\mathbb{R}^N} G_{\ln} (u_{1, \lambda}) u_{1,\lambda}^2 ~dx. 
\]
Therefore, the function $w_\lambda$ satisfies
\begin{equation}\label{main-Kelvin-differ-0}
    \begin{cases}
\mathcal{L}_\Delta w_\lambda(x) + g_{\ln} w_\lambda(x) = \sigma  w_\lambda(x)  + \frac{1}{\|u_1\|_2^2} \left(G_{\ln} (u_{1, \lambda}) (x) u_{1, \lambda}(x) - G_{\ln} (u_{1}) (x) u_{1}(x) \right)  \quad & \text{in} \ H_\lambda,\\
w_\lambda(x_\lambda) = - w_\lambda(x) & \text{in } \ H_\lambda,
\end{cases}
\end{equation}
where 
\[
g_{\ln} := \frac{4}{N}\left(\int_{\mathbb{R}^N} (u^{\#}_r)^2 \ln u^{\#}_r ~dx\right).
\]
Next, we rewrite the difference of logarithmic potentials term in \eqref{main-Kelvin-differ-0} as follows:
\begin{equation}\label{est:form-1}
    \begin{split}
G_{\ln} (u_{1, \lambda}) (x) & u_{1, \lambda}(x) - G_{\ln} (u_{1}) (x) u_{1}(x) = \left(G_{\ln} (u_{1, \lambda}) (x) -G_{\ln} (u_{1}) (x)\right) u_{1, \lambda}(x) + G_{\ln} (u_{1}) (x) \left(u_{1, \lambda}(x) - u_{1}(x) \right)  \\
& = \left(\int_{\mathbb{R}^N} \ln \left(\frac{|x-z|^4}{|x^\lambda-z|^4} \right) (u_{1}(z))^2 ~dz \right) u_{1, \lambda}(x) + G_{\ln} (u_{1}) (x) \left(u_{1, \lambda}(x) - u_{1}(x) \right) \\
& = I_1(x) u_{1, \lambda}(x)  + I_2(x) u_{1, \lambda}(x) + G_{\ln} (u_{1}) (x) w_\lambda(x)
\end{split}
\end{equation}
where 
\[
I_1(x):= \int_{H_\lambda} \ln \left(\frac{|x-z|^4}{|x^\lambda-z|^4} \right) (u_{1}(z))^2 ~dz \quad \text{and}  \quad I_2(x):= \int_{H^\lambda} \ln \left(\frac{|x-z|^4}{|x^\lambda-z|^4} \right) (u_{1}(z))^2 ~dz.
\]
By change of variables and reflection identites, we have
\begin{equation}\label{est:I2}
   I_2(x) = \int_{H_\lambda} \ln \left(\frac{|x-z^\lambda|^4}{|x^\lambda-z^\lambda|^4} \right) (u_{1}(z^\lambda))^2 ~dz = - \int_{H_\lambda} \ln \left(\frac{|x-z|^4}{|x^\lambda-z|^4} \right) (u_{1,\lambda}(z))^2 ~dz.
\end{equation}
Combining \eqref{est:form-1} and \eqref{est:I2}, we obtain
\begin{equation}\label{differ-rewrite}
     \begin{split}
G_{\ln} (u_{1, \lambda}) (x) & u_{1, \lambda}(x) - G_{\ln} (u_{1}) (x) u_{1}(x) = I_\lambda(x) u_{1, \lambda}(x)  + G_{\ln} (u_{1}) (x) w_\lambda(x)
\end{split}
\end{equation}
where 
\[
I_\lambda(x):= \int_{H_\lambda} K_{\lambda,x,z} (u_{1}(z) + u_{1,\lambda}(z)) w_\lambda(z) ~dz, \quad K_{\lambda,x,z}:= \ln \left(\frac{|x-z^\lambda|^4}{|x-z|^4} \right).
\]
Note that the kernel in the above definition satisfies $K(\lambda,x,z) \geq 0$ for all $x,z \in H_\lambda.$
Then, $w_\lambda$ satisfies the equation
\begin{equation}\label{main-Kelvin-differ}
    \begin{cases}
\mathcal{L}_\Delta w_\lambda(x) + g_{\ln} w_\lambda(x) = \left(\sigma  + \frac{1}{\|u_1\|_2^2} G_{\ln} (u_{1}) (x)\right) w_\lambda(x)  + \frac{1}{\|u_1\|_2^2} I_\lambda(x) u_{1,\lambda}(x)  \quad & \text{in} \ H_\lambda,\\
w_\lambda(x_\lambda) = - w_\lambda(x) & \text{in } \ H_\lambda.
\end{cases}
\end{equation}
From \eqref{def:Kelvin-trans}, we observe that $w_\lambda$ vanishes at infinity and on the boundary of $H_\lambda.$ Moreover, if $ \inf_{x \in H_\lambda} w_\lambda(x)<0$, then there exists a point $\tilde{x}_{\lambda} \in H_\lambda$ such that
\[
w_\lambda(\tilde{x}_{\lambda}) = \inf_{x \in H_\lambda} w_\lambda(x) <0.
\]
Denote $$\Omega_\lambda:= \{x \in H_\lambda: w_\lambda(x) <0\}$$ and 
\[
w_{\lambda,-}(x):= 
\begin{cases}
\min\{0, w_\lambda(x)\} \ & \text{for} \ x \in H_\lambda,\\
0\ & \text{for} \ x \in \mathbb{R}^N \setminus H_\lambda.
\end{cases}
\]
Next, we start moving the plane $T_\lambda$ along the $x_1$-axis from near $-\infty$ to the right. \vspace{0.1cm}\\
\textbf{Claim 1:} There exists $\lambda^\ast \ll 1$ such that $w_\lambda >0$ in $H_\lambda$ for all $\lambda < \lambda^\ast.$
\vspace{0.1cm}\\
By multiplying the equation \eqref{main-Kelvin-differ} by $w_{\lambda,-}$ and integrating over $\mathbb{R}^N$, we obtain
\begin{equation}\label{test-func-est}
    \begin{split}
    I:= \int_{\mathbb{R}^N} & (\mathcal{L}_\Delta w_\lambda) w_{\lambda,-} ~dx = \int_{\mathbb{R}^N} \left( \sigma - g_{\ln} + \frac{1}{\|u_1\|_2^2} G_{\ln} (u_{1}) (x)\right) (w_{\lambda,-}(x))^2  ~dx \\
    & \qquad + \frac{1}{\|u_1\|_2^2} \int_{\mathbb{R}^N}  I_\lambda(x) u_{1,\lambda}(x)  w_{\lambda,-}(x) ~dx := K_1 + K_2. 
\end{split}
\end{equation}
Next, we derive the estimates for the above integrals seperately. First we show that for $x \in \supp(w_{\lambda,-})$
\begin{equation*}
    L_{\Delta} w_{\lambda, -}(x) \geq L_{\Delta} w_{\lambda}(x).
\end{equation*}
Note that
\[
\begin{split}
\frac{1}{c_N} \left(L_{\Delta} w_{\lambda, -}(x) - L_{\Delta} w_{\lambda}(x)\right) & = \int_{\mathbb{R}^N} \frac{w_\lambda(z) - w_{\lambda, -}(z)}{|x-z|^N} ~dz \\
& = \int_{H_\lambda \cap (\supp(w_{\lambda, -}))^c} \frac{w_\lambda(z)}{|x-z|^N} ~dz  + \int_{H_\lambda^c\cap (\supp(w_{\lambda, -}))^c} \frac{w_\lambda(z)}{|x-z|^N} ~dz \\
& = \int_{H_\lambda \cap (\supp(w_{\lambda, -}))^c} w_\lambda(z) \left[\frac{1}{|x-z|^N} - \frac{1}{|x-z^\lambda|^N}\right] ~dz \geq 0
\end{split}
\]
since $w_\lambda(z^\lambda) = - w_\lambda(z)$ and $|x-z^\lambda| \geq |x-z|$ for $x,z \in H_\lambda.$ This together with \cite[Theorem 1]{Beckner-1995} (by extending it to $\mathbb{H}_{\ln}(\mathbb{R}^N)$ functions via density arguments), we obtain
\begin{equation}\label{left-term-est}
   \begin{split}
    \int_{\mathbb{R}^N} & (\mathcal{L}_\Delta w_\lambda) w_{\lambda,-} ~dx \geq  \int_{\mathbb{R}^N}(\mathcal{L}_\Delta w_{\lambda, -}) w_{\lambda,-} ~dx = \int_{\mathbb{R}^N} \ln |\xi|^2 |\widehat{w}_{\lambda,-}|^2 ~d\xi \\
        & \geq 2 D_N \|w_{\lambda,-}\|_2^2 - 2 \int_{\mathbb{R}^N} \ln |x| |w_{\lambda,-}|^2 ~dx
\end{split} 
\end{equation}
where $D_N:= \psi\left(\frac{N}{4}\right) -\ln \pi.$
In order to derive estimates for $K_1$ and $K_2$, we show that there exist constants $C_\ast, C_\lambda >0$ and $ \lambda_\ast \ll -1$ such that
\begin{equation}\label{upper-est-I1}
    G_{\ln} (u_1) (x) \leq \ln |x|^{-4} \|u_1\|_2^2 + C_\ast \left(\|u_1\|_2^2 + O(|x|^{-N})\right) \ \text{for all} \ x \in H_\lambda \ \text{and} \ \lambda < \lambda_\ast
    \end{equation}
and    
\begin{equation}\label{est-norm-est-I2}
        \|I_\lambda^- u_{1,\lambda}\|_{2, H_\lambda} \leq C_\lambda \|w_{\lambda,-}\|_{2, H_\lambda} \quad \text{such that} \quad C_\lambda \to 0 \ \text{as} \ \lambda \to \infty
    \end{equation}
    where 
    \[
    I_\lambda^-(x) := \int_{H_\lambda} K(\lambda, x, z)(u_{1}(z) + u_{1,\lambda}(z)) w_{\lambda,-}(z) ~dz.
    \]
Let $x \in H_\lambda.$ By applying change of variables, we obtain
\[
\begin{split}
G_{\ln}(u_1)(x) & = \ln |x|^{-4} \|u_1\|_2^2 - 4 \int_{\mathbb{R}^N} \ln\left(\frac{x}{|x|} - \frac{y}{|x|}\right) u_1^2(y) ~dy \\
& = \ln |x|^{-4} \|u_1\|_2^2 - 4 \int_{|y| < \frac{|x|}{2}} \ln\left(\frac{x}{|x|} - \frac{y}{|x|}\right) u_1^2(y) ~dy -4  \int_{|y| \geq \frac{|x|}{2}} \ln\left(\frac{x}{|x|} - \frac{y}{|x|}\right) u_1^2(y) ~dy\\
& := \ln |x|^{-4} \|u_1\|_2^2 -4 (J_1 + J_2).
\end{split}
\]
It is easy to that for $|y| < \frac{|x|}{2}$, $\frac{1}{2} \leq \left(\frac{x}{|x|} - \frac{y}{|x|}\right) \leq  \frac{3}{2}$, which further implies $J_1 \leq C \|u_1\|_2^2$. Now, by the definition of Kelvin transform, we have
\begin{equation}\label{Kelvin-prop}
    \lim_{|x| \to \infty} u_1 (x) |x|^N = c \quad \text{for some} \ c \in \mathbb{R}^+.
\end{equation}
For any $\lambda \ll -1$ and $x\in H_\lambda$, using \eqref{Kelvin-prop}, we get $u_1(y)\leq 2c|y|^{-N}$ in $\{|y|\geq \frac{|x|}{2}\}$ and applying the change of variable $z = \frac{y}{|x|}$ and using the notation $e = \frac{x}{|x|}$, we obtain
\begin{equation}\label{est-J-2}
  J_2 \leq 2c \int_{|y| \geq \frac{|x|}{2}} \left|\ln\left(\frac{x}{|x|} - \frac{y}{|x|}\right)\right| |y|^{-2N} ~dy = C_0 |x|^{-N}  \end{equation}
where 
$
C_0:= 2c \int_{|z| \geq \frac{1}{2}} \left|\ln\left(e - z\right) \right| |z|^{-2N} ~dz < +\infty.$
Hence, the required claim in \eqref{upper-est-I1}. By the triangle inequality and the definition of the reflection $z^\lambda$, we have
\[
\frac{|x-z^\lambda|}{|x-z|}
\leq 1+\frac{|z-z^\lambda|}{|x-z|}
=1+\frac{2(\lambda-z_1)}{|x-z|}\quad \text{for all $x,z\in H_\lambda$ with $x\neq z$.}
\]
Consequently, for every $\alpha \in (0,\frac{1}{2})$, there exists $C_\alpha >0$ such that 
\begin{equation}\label{kernel-bound}
    K(\lambda, x, z) \leq 4 \ln \left(1+ \frac{2(\lambda-z_1)}{|x-z|}\right) \leq C_\alpha \frac{(\lambda-z_1)^\alpha}{|x-z|^{ \alpha}} \quad \text{for all} \ \ x,z \in H_\lambda \ \text{and} \ x \neq z.
\end{equation}
By using \eqref{kernel-bound} and applying Cauchy-Schwarz inequality, we obtain
\begin{equation}\label{esti-I-}
    \begin{split}
    \|I_\lambda^- u_{1,\lambda} \|^2_{2,H_\lambda} & = \int_{H_\lambda} (u_{1,\lambda}(x))^2 \left(\int_{H_\lambda} K(\lambda, x, z)(u_{1}(z) + u_{1,\lambda}(z)) w_{\lambda,-}(z) ~dz\right)^2 ~dx \\
    & \leq \|w_{\lambda, -}\|_{2, H_\lambda}^2 \int_{H_\lambda} (u_{1,\lambda}(x))^2 \int_{\supp(w_{\lambda,-})} (K(\lambda,x,z))^2 (u_1(z) + u_{1,\lambda}(z))^2 ~dz ~dx\\
    & \leq C_\lambda^2 \|w_{\lambda, -}\|_{2, H_\lambda}^2
\end{split}
\end{equation}
where 
\begin{equation}\label{def:C-lambda}
    C_\lambda^2:= C_\alpha \int_{\supp(w_{\lambda,-})} (\lambda-z_1)^{2\alpha} (u_1(z) + u_{1,\lambda}(z))^2 \int_{H_\lambda} \frac{(u_{1,\lambda}(x))^2}{|x-z|^{2\alpha}}  ~dx ~dz.
\end{equation}
It is easy to derive that 
\begin{equation}\label{part-est-1}
   \begin{split}
    \int_{H_\lambda} \frac{(u_{1,\lambda}(x))^2}{|x-z|^{2\alpha}}  ~dx \leq C_1 \|u^2_{1,\lambda}\|_{L^\infty(H_\lambda)}  + \|u_{1,\lambda}\|_{2,H_\lambda}^2\quad \text{where} \ C_1:= \int_{H_\lambda \cap B_1(z)} \frac{1}{|x-z|^{2\alpha}}  ~dx.
\end{split} 
\end{equation}
Now, by taking $\lambda \ll -1$ and using \eqref{part-est-1} in \eqref{def:C-lambda}, \eqref{Kelvin-prop}, $u_{1,\lambda} < u_1$ in $\supp(w_{\lambda,-})$ and $|\lambda-z_1| \leq |z|$ for $z \in H_\lambda$, we obtain
\begin{equation*}
    \begin{split}
        C_\lambda^2 & \leq C_1 \left(\|u^2_{1,\lambda}\|_{L^\infty(H_\lambda)} + \|u_{1,\lambda}\|_{2,H_\lambda}^2\right) \int_{\supp(w_{\lambda,-})} (\lambda-z_1)^{2\alpha} (u_1(z))^2 ~dz\\
        & \leq C_2 \left(\|u^2_{1,\lambda}\|_{L^\infty(H_\lambda)} + \|u_{1,\lambda}\|_{2,H_\lambda}^2\right) \int_{\supp(w_{\lambda,-})} |z|^{2\alpha - 2N} ~dz\\
        & \leq C_2 \left(\|u^2_{1,\lambda}\|_{L^\infty(H_\lambda)} + \|u_{1,\lambda}\|_{2,H_\lambda}^2\right) \int_{|z| > |\lambda|} |z|^{2\alpha - 2N} ~dz\\
        & \leq C_3 \left(\|u^2_{1,\lambda}\|_{L^\infty(H_\lambda)} + \|u_{1,\lambda}\|_{2,H_\lambda}^2\right) |\lambda|^{2\alpha-N} \to 0 \ \text{as} \ \lambda \to - \infty
    \end{split}
\end{equation*}
where the second last inequality follows from the fact that, for $z\in H_\lambda$ and $\lambda<0$, we have
\[
|\lambda|<|z_1|\leq |z|,
\]
and consequently,
\[
\supp(w_{\lambda,-})\subset H_\lambda
\subset \{z\in\mathbb{R}^N: |z|>|\lambda|\}.
\]
Hence, the required claim in \eqref{est-norm-est-I2}. Now, by using \eqref{upper-est-I1} and \eqref{est-norm-est-I2} in \eqref{test-func-est}, we obtain
\begin{equation}\label{right-term-est}
    \begin{split}
        K_1 + K_2 \leq (\sigma - g_{\ln}) \|w_{\lambda,-}\|_2^2 -  4 \int_{\mathbb{R}^N} \ln |x| |w_{\lambda,-}|^2 ~dx + (C_\ast + \frac{C_\lambda}{\|u_1\|_2^2}) \|w_{\lambda,-}\|_2^2.
    \end{split}
\end{equation}
Finally, combining the estimates in \eqref{left-term-est} and \eqref{right-term-est}, we obtain for $\lambda \ll -1$ and $C^\ast >0$ (independent of $\lambda$)
\begin{equation}\label{contr:ineq}
    (2 D_N - \sigma + g_{\ln} - C^\ast) \|w_{\lambda,-}\|_2^2  \leq  - 2 \int_{\mathbb{R}^N} \ln |x| |w_{\lambda,-}|^2 ~dx
\end{equation}
which leads to $w_{\lambda,-} \equiv 0$ in $H_\lambda$ for $\lambda \ll -1.$ Therefore, $w_\lambda \geq 0$ in $H_\lambda$. This further yields $ I_\lambda(x) \geq 0$ in $H_\lambda.$ Hence, by applying the strong maximum principle \cite[Lemma~4.1]{Chen-Zhou_2025} we obtain
\[
w_\lambda>0 \quad \text{or} \quad w_\lambda \equiv 0 \qquad \text{in } H_\lambda.
\]
Next, we define
\begin{equation}\label{def-Lambda}
    \Lambda
:=
\sup\left\{
\lambda<0:\;
w_\mu(x)\ge0,\ \forall\,x\in H_\mu,\ \forall\,\mu\le\lambda
\right\}
\end{equation}
and show that $u_1$ is symmetric with respect to the plane $T_\Lambda$. \\
\textbf{Case 1: $\Lambda =0$}. Then, by the continuity of $u_1$, we obtain
\[
u_1(x_1,x') \leq u_1(-x_1,x'), \qquad x_1<0,\; x'\in\mathbb{R}^{N-1}.
\]
For $\lambda \geq 0$, we move the plane from $x_1=+\infty$ toward the left. By an argument analogous to the one used above, we obtain
\[
u_1(-x_1,x') \geq u_1(x_1,x'),
\qquad x_1>0,\; x'\in\mathbb{R}^{N-1}.
\]
Combining the two inequalities yields, we obtain 
\[
u_1(x)=u_{1}(x^0),
\qquad x\in H_{0}.
\]
Moreover, since the moving plane argument can be carried out in every direction until the limiting position passes through the origin, we conclude that $u_1$ is a radial function about its center $\tilde{x} =0$. 

Next, suppose first that $u_1$ is bounded at $\tilde{x} =0$, say $L_{\tilde{x}}:=u_1(\tilde{x})$. Then the radial symmetry and monotonicity imply that
\[
L_{\tilde{x}}
=
\lim_{x \to \tilde{x}} u_1(x)
\in(0,+\infty) \ \text{and} \ \lim_{|z|\to\infty} u(z)|z-\tilde{x}|^N = \lim_{|z|\to\infty} u(z)|z|^N
= L_{\tilde{x}}.
\]
Moreover, if we choose another center $\overline{x}$, then the above computation gives
\[
\lim_{|z|\to\infty} u(z)|z-\overline{x}|^N = \lim_{|z|\to\infty} u(z)|z|^N
= L_{\overline{x}}.
\]
Therefore, $L_{\overline{x}} = L_{\tilde{x}}.$
This further gives \eqref{asym-beha}. Next, suppose that $u_1$ is unbounded at $\tilde{x} =0$. By radial symmetry, $u$ must blow up at $\tilde{x}$, and therefore
\[
\lim_{|x|\to\infty} u(x)|x|^N
=
+\infty
\]
which further contradicts the assumption that $u \in L^1_0(\mathbb{R}^N).$ Hence, the claim in \eqref{asym-beha} follows.\\
\textbf{Case 2: $\Lambda < 0$.}
By the continuity of $w_\lambda$ with respect to $\lambda$ and \cite[Lemma 4.1]{Chen-Zhou_2025}, one of the following alternatives holds:
\[
w_{\Lambda}\equiv0
\quad\text{in }H_{\Lambda} \quad \text{or} \quad
w_{\Lambda}>0
\quad\text{in }H_{\Lambda}.
\]
If $w_{\Lambda}\equiv 0$ in $H_{\Lambda}$ then
\[
u_1(x)=u_{1,\Lambda}(x),
\qquad x\in H_{\Lambda} \quad \text{and} \ x^\Lambda \not \equiv 0.
\]
Consider a sequence $x_k \to 0$ such that $x_k \neq 0$ and $x_k^{\Lambda} \in H_{\Lambda}.$
Now, by the symmetry of $u_1$ with respect to $T_{\Lambda}$, we have
\[
u_1(x_k)=u_1(x_k^{\Lambda}).
\]
As $x_k\to 0$, we have
\[
x_k^{\Lambda}\to 0^{\Lambda}
=(2\Lambda,0,\ldots,0).
\]
Since $\Lambda<0$, it follows that $0^{\Lambda}\neq 0.$ Therefore, $0^{\Lambda}$ is a regular point of $u_1$, and by the
continuity of $u_1$ in $\mathbb{R}^N\setminus\{0\}$, we obtain
\[
u_1(x_k) = u_1(x_k^{\Lambda})
\longrightarrow
u_1(0^{\Lambda})\in(0,\infty).
\]
Consequently,
\[
\lim_{x\to 0}u_1(x)
=
u_1(0^{\Lambda})
\in(0,\infty).
\]
Thus, $u_1$ admits a positive and finite continuous extension to the
Kelvin center $0$. In particular, after defining $u_1(0):=\lim\limits_{x \to 0}u_1(x),$ we have $u_1(0)=u_1(0^{\Lambda})\in(0,\infty).$ Now, by taking $\tilde{x} =0$ and $r=1$ and putting $y=\frac{x}{|x|^2}$, we obtain
\[
u_1(x)
=
|x|^{-N} u \!\left(\frac{x}{|x|^2}\right) = |y|^N u(y).
\]
Note that $|y|\to\infty$ as $x\to0$. Hence,
\[
\lim\limits_{x \to 0} u_1(x) = \lim_{|y| \to \infty} |y|^N u(y)<\infty,
\]
which yields \eqref{asym-beha} and 
\[
u_1(x)=u_{1,\Lambda}(x),
\qquad x\in H_{\Lambda}.
\]
Therefore, $u_1$ is symmetric with respect to the plane $T_\Lambda.$

Next, we show that the second alternative, namely, $w_{\Lambda}(x)>0$ for all $x\in H_{\Lambda}$, cannot occur when $\Lambda<0$. To this end, it suffices to prove that there exists $\varepsilon>0$, sufficiently small, such that
\[
w_\lambda(x)\ge0,
\qquad \text{for all}\ \,x\in H_\lambda,\quad
\lambda\in[\Lambda,\Lambda+\varepsilon),
\]
which contradicts the definition of $\Lambda$. Now, by using \eqref{Kelvin-prop} for $x \in H_\lambda \setminus B_R$ and $R \gg 1$, we get $u_1(y)\leq 2c|y|^{-N}$ in $\{|y|\geq \frac{|x|}{2} \geq \frac{R}{2}\}$ and the upper bounds in \eqref{est-J-2}. Therefore, by repeating the arguments in the proof of \eqref{upper-est-I1}, there exists $R \gg 1$ such that 
\begin{equation}\label{claim-2-est-1}
    G_{\ln} (u_1) (x) \leq \ln |x|^{-4} \|u_1\|_2^2 + C_\ast \left(\|u_1\|_2^2 + O(|x|^{-N})\right) \  \text{for all} \ x \in H_\lambda \setminus B_R \ \text{and} \ \lambda \in [\Lambda, \Lambda +\eps).
\end{equation}
From \eqref{esti-I-}, we obtain
\begin{equation}\label{claim-2-est-2}
\|I_\lambda^- u_{1,\lambda}\|_{2, H_\lambda} \leq C_\lambda \|w_{\lambda,-}\|_{2, H_\lambda} 
\end{equation}
where $C_\lambda$ is defined in \eqref{def:C-lambda}. Next, we show that $ C_\lambda \to 0$ as $ \lambda \to \Lambda.$
Now, by using the continuity of $w_\lambda$, there exists a sequence $\{R_{\lambda}\}_{\lambda \in (\Lambda, \Lambda +\eps)}$ such that $R<R_\lambda$, $ R_\lambda \to +\infty$ as $\lambda \to \Lambda$, and 
\[
w_{\lambda} > 0 \quad \text{for all} \ x \in H_\lambda \cap B_{\lambda}(0) \ \text{and} \ \lambda \in (\Lambda, \Lambda +\eps).
\]
Therefore, $\supp(w_{\lambda,-}) \subset H_\lambda \setminus B_{|\lambda|}(0)$ for every $\lambda \in (\Lambda, \Lambda +\eps)$. Moreover, $|\lambda-z_1| \leq |z|$ for $z \in \supp(w_{\lambda,-})$. Since $w_\lambda \to w_\Lambda$ as $\lambda \to \Lambda$ and $w_\Lambda >0$ in $H_\Lambda$, it follows that $\chi_{\supp(w_{\lambda,-})}(z) \to 0$ for a.e. $z \in \mathbb{R}^N$. Furthermore, by the continuity of $u_1$ and \eqref{Kelvin-prop}, the function $\chi_{\supp(w_{\lambda,-})} |z|^{2\alpha} (u_1(z))^2$ is dominated by an integrable function for any $\alpha \in (0, \frac{1}{2})$. Hence, using \eqref{def:C-lambda} and \eqref{part-est-1}, and applying the dominated convergence theorem, we obtain
\[
\begin{split}
    C_\lambda^2 & \leq C_1 \left(\|u^2_{1,\lambda}\|_{L^\infty(H_\lambda)} + \|u_{1,\lambda}\|_{2,H_\lambda}^2\right) \int_{\mathbb{R}^N} \chi_{\supp(w_{\lambda,-})} |z|^{2\alpha} (u_1(z))^2 ~dz \to 0 \ \text{as} \ \lambda \to \Lambda.
\end{split}
\]
Again, by multiplying the equation \eqref{main-Kelvin-differ} by $w_{\lambda,-}$ for $\lambda \in (\Lambda, \Lambda +\eps)$ and integrating over $\mathbb{R}^N$ and using \eqref{claim-2-est-1}-\eqref{claim-2-est-2}, repeating the same arguments as in \textbf{Claim 1}, we obtain $\|w_{\lambda,-}\|_{2,H_\lambda} = 0$ for $\lambda \in (\Lambda, \Lambda +\eps)$, which is a contradiction to the definition of $\Lambda.$ 

Now, fix the standard basis vectors $e_1,\ldots,e_N$ of $\mathbb R^N$. For each $i\in{1,\ldots,N}$, we apply the moving plane method in the $e_i$-direction. Let $T_\lambda^i:=\{x\in\mathbb R^N:x_i=\lambda\}$, and denote by $x^{\lambda,i}$ the reflection of $x$ with respect to $T_\lambda^i$. Set $w_\lambda^i(x):=u_1(x^{\lambda,i})-u_1(x)$.

In view of \textbf{Case 2}, let $\Lambda_i$ be the limiting position in the $e_i$-direction such that $w_{\Lambda_i}^i\equiv 0$ on $T^i_{\Lambda_i}$. Then $u_1$ is symmetric with respect to the hyperplane $T_{\Lambda_i}^i$. Moreover, from the strict inequality before reaching the limiting position, $w_\lambda^i(x)>0$ for $\lambda<\Lambda_i$, it follows that $u_1$ is strictly increasing in the $x_i$-direction for $x_i<\Lambda_i$. By symmetry with respect to $T_{\Lambda_i}^i$, $u_1$ is strictly decreasing in the $x_i$-direction for $x_i>\Lambda_i$.

Define $x_0:=(\Lambda_1,\ldots,\Lambda_N)$. It is easy to see that $x_0$ is the unique maximum point of $u$.


Next, we apply the moving-plane argument in an arbitrary direction. In view of Proposition \ref{pro:invariance} (iv), the equation is invariant under rotations, it is enough to reduce the general direction to the already treated coordinate direction. Let $e\in \mathbb S^{N-1}$ be arbitrary. Choose an orthogonal matrix $Q$ such that $Qe_1=e$, and define $v(x):=u_1(Qx)$. By the rotational invariance of the equation, $v$ satisfies the same equation as $u_1$. Hence, we may apply the moving-plane argument already established in the $e_1$-direction to $v$. Therefore, there exists $\Lambda_e \in\mathbb R$ such that $v$ is symmetric with respect to the hyperplane $\{x\in\mathbb R^N:x_1=\Lambda_e\}$. Equivalently, $v(x)=v(x^{\Lambda_e,1})$, where $x^{\Lambda_e,1}$ denotes the reflection of $x$ with respect to the hyperplane ${x_1=\Lambda_e}$.

Returning to the original variables $y=Qx$, we observe that $x_1=x\cdot e_1=(Qx)\cdot(Qe_1)=y\cdot e$. Thus, the hyperplane ${x_1=\Lambda_e}$ is transformed by $Q$ into $T_{\Lambda_e}^e:=\{y\in\mathbb R^N:y\cdot e=\Lambda_e\}$. Consequently, $u_1$ is symmetric with respect to $T_{\Lambda_e}^e$.

Now, let $x_0$ denote the unique maximum point of $u_1$ obtained from the moving-plane argument in the standard coordinate directions. Since $u_1$ is symmetric with respect to $T_{\Lambda_e}^e$, the reflection of $x_0$ across $T_{\Lambda_e}^e$ is also a maximum point of $u_1$. By uniqueness of the maximum, this reflected point must coincide with $x_0$. Therefore, $x_0\in T_{\Lambda_e}^e$, that is, $x_0\cdot e=\Lambda_e$.

Since $e\in\mathbb S^{N-1}$ was arbitrary, $u_1$ is symmetric with respect to every hyperplane $\{x\in\mathbb R^N:(x-x_0)\cdot e=0\}$ passing through $x_0$. It follows that $u_1$ is radially symmetric and radially decreasing with respect to the point $x_0=(\Lambda_1,\ldots,\Lambda_N)$.

Now, assuming the decay condition \eqref{asym-beha}, we apply the moving plane method directly to the solution $u$ of \eqref{main:prob}. This yields that $u$ is radially symmetric 
about some point and is radially decreasing. Finally, by testing the equation \eqref{main:prob} with $u$ and using Proposition \ref{new:Log-Choq:ineq}, we obtain
\begin{equation}\label{testing-eq}
 \begin{split}
        \int_{\mathbb{R}^N} \ln |\xi|^2 |\mathcal{F}(u)|^2 ~d\xi & = \sigma \|u\|_2^2 + \frac{1}{\|u\|_2^2 }\int_{\mathbb{R}^N}  \left(\ln \left(\frac{1}{|x|^4}\right) \ast u^2\right) u^2(x) ~dx ~dy - \frac{4}{N} \int_{\mathbb{R}^N} u^2 \ln |u| ~dx \\
        & \geq B_{N,\mathcal{L}} \|u\|_2^2 +  \frac{1}{\|u\|_2^2} \int_{\mathbb{R}^N} \left(\ln \left(\frac{1}{|x|^4}\right) \ast u^2\right) u^2(x) ~dx ~dy \\
        & \qquad - \frac{4}{N}  \int_{\mathbb{R}^N} u^2(x) \ln |u(x)|~dx + \frac{4}{N} \|u\|_2^2 \ln \|u\|_2.
    \end{split}
    \end{equation}
This implies
\[
(\sigma-B_{N,\mathcal{L}}) \|u\|_2^2 \geq \frac{4}{N} \|u\|_2^2 \ln \|u\|_2 \quad \Longleftrightarrow \quad \|u\|_2 \leq e^{\frac{N}{4}\left(\sigma- B_{N, \mathcal{L}}\right)}.
\]
\qed
\section{The uniqueness of solutions}\label{uniqueness}
First, we show that the solution $u$ of the problem \eqref{main:prob} satisfies the self-Kelvin-duality identity in the sense that: the solution $u$ is a fixed point of its own Kelvin transform about a sphere centered at some point $x_0 \in \mathbb{R}^N$ and of some radius (depending upon $u_\infty$ and $u(x_0)$).
\begin{lemma}\label{lem:self-dual-iden}
    Let $u$ be the positive solution of the problem \eqref{main:prob}, $x_0 \in \mathbb{R}^N$ and $s = \left( \frac{u_\infty}{u(x_0)}\right)^\frac{1}{N}.$ Then, 
    \begin{equation}\label{self-dual-identity}
            u(x) = \frac{s^N}{|x-x_0|^N} u\left(\frac{s^2 (x-x_0)}{|x-x_0|^2} + x_0\right) = u_s^{\#}(x) \quad \text{for all} \ x \in \mathbb{R}^N
    \end{equation}
where $u_\infty$ is defined in \eqref{asym-beha}.    
\end{lemma}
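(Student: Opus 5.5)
The plan is to prove \eqref{self-dual-identity} for an arbitrary centre $x_0$ by the \emph{method of moving spheres}. This is the Kelvin-transform analogue of the moving-plane argument in the proof of Theorem \ref{thm-radi-mono-prop}, as in Li--Zhu and Chen--Li--Ou. The identification $\lambda=s$ will then come from the asymptotics \eqref{asym-beha}.

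\textbf{Setup and invariance.} Fix $x_0\in\mathbb{R}^N$. For $\lambda>0$ write $u^{\#}_\lambda$ for the Kelvin transform \eqref{def:Kelvin-trans} centred at $\tilde x=x_0$ with radius $\lambda$. By Proposition \ref{pro:invariance}(iii), $u^{\#}_\lambda$ solves \eqref{main:prob} in $\mathbb{R}^N\setminus\{x_0\}$, and by \eqref{est-l2-norm} it has the same $L^2$ norm as $u$. The nonlocal correction terms therefore agree, up to the explicit logarithmic shifts computed in \eqref{est:conv-term-3}. Set
\[
w_\lambda:=u^{\#}_\lambda-u \quad\text{on } \Sigma_\lambda:=\{|x-x_0|>\lambda\}.
\]
The map $x\mapsto x^{\ast,\lambda}$ plays the role of the reflection $x\mapsto x^\lambda$, so $w_\lambda$ is antisymmetric under inversion in the sphere $\partial B_\lambda(x_0)$, after multiplication by the Kelvin weight. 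Using the identities \eqref{est:iden-1}--\eqref{est:iden-2}, I would write the equation for $w_\lambda$ in the form \eqref{main-Kelvin-differ}. It has a potential $\sigma+\|u\|_2^{-2}G_{\ln}(u)$ plus a nonlocal term $I_\lambda$. The kernel of $I_\lambda$ is $\ln\bigl(|x-z^{\ast,\lambda}|^4|z-x_0|^4/(\lambda^4|x-z|^4)\bigr)$, which is nonnegative for $x,z\in\Sigma_\lambda$.

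\textbf{Starting the sphere, and the critical radius.}
\begin{itemize}
\item \emph{Small radii.} For $\lambda$ small I would show that $w_\lambda\le 0$ in $\Sigma_\lambda$. Near $\partial B_\lambda$ this uses $u(x_0)>0$, continuity of $u$ and the decay \eqref{asym-beha}. The remainder of the region is handled exactly as in Claim~1 of Theorem \ref{thm-radi-mono-prop}: test the equation with $w_{\lambda,+}$, use Beckner's inequality \eqref{left-term-est}, and bound the potential as in \eqref{upper-est-I1} and the nonlocal term as in \eqref{est-norm-est-I2}. The estimate $C_\lambda\to0$ now holds because the support of $w_{\lambda,+}$ lies far out, or close to the sphere.
\item \emph{Critical radius.} Define $\bar\lambda(x_0):=\sup\{\lambda: w_\mu\le0 \text{ in }\Sigma_\mu \ \forall\mu\le\lambda\}$. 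It is finite, because for large $\lambda$ one has $u^{\#}_\lambda(x)|x|^N\to\lambda^Nu(x_0)>u_\infty$.
\item \emph{Equality at the critical radius.} Exactly as in Case~2 of Theorem \ref{thm-radi-mono-prop}, the strong maximum principle \cite[Lemma 4.1]{Chen-Zhou_2025} together with the $C_\lambda\to0$ dominated-convergence argument shows that strict inequality $w_{\bar\lambda}<0$ would allow the sphere to move further. Hence $w_{\bar\lambda}\equiv0$, that is, $u=u^{\#}_{\bar\lambda}$.
\end{itemize}

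\textbf{Identifying the radius.} Multiply $u=u^{\#}_{\bar\lambda}$ by $|x-x_0|^N$ and let $|x|\to\infty$. The left side tends to $u_\infty$ by \eqref{asym-beha}, since $|x-x_0|^N\sim|x|^N$. The right side equals $\bar\lambda^N u\bigl(x^{\ast,\bar\lambda}\bigr)$, and $x^{\ast,\bar\lambda}\to x_0$, so it tends to $\bar\lambda^N u(x_0)$. Therefore $\bar\lambda=(u_\infty/u(x_0))^{1/N}=s$, which is exactly \eqref{self-dual-identity}. At $x=x_0$ the identity is understood through this limit.

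\textbf{Main obstacle.} The hard step is proving that the sphere cannot stop with strict inequality. The coefficient $G_{\ln}(u)$ and the kernel of $I_\lambda$ now depend on the sphere, and \eqref{est:conv-term-3} produces an extra term $4\ln(|x-x_0|/\lambda)$. I would first check that this term is absorbed by the $-2\int\ln|x|\,|w|^2$ term coming from Beckner's inequality, as in \eqref{contr:ineq}. The other point to check is that the kernel bound \eqref{kernel-bound} has an analogue of the form $C_\alpha\bigl(|z-x_0|-\lambda\bigr)^\alpha/|x-z|^\alpha$.
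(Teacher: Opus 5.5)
\textbf{Comparison with the paper.} Your route is genuinely different from the paper's. The paper runs no new maximum principle for this lemma. After translating $x_0$ to $0$, it applies Theorem~\ref{thm-radi-mono-prop} to $w(x)=|x|^{-N}u\bigl(\frac{x}{|x|^2}-se\bigr)$, which is a solution by Proposition~\ref{pro:invariance} and is regular at $0$ with $w(0)=u_\infty$. Axial symmetry about $\mathbb{R}e$ together with $w(0)=w(\frac{e}{s})$ puts the centre of $w$ at $\frac{e}{2s}$. The paper then reads \eqref{self-dual-identity} off the reflection symmetry of $w$ about that centre along each ray.

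That argument is short, but the axial symmetry of $w$ comes from $u$ being radial about $x_0$. As written, it therefore gives the identity only at the centre of symmetry. Your moving spheres cost a full maximum-principle argument in the inverted geometry, but they aim at every $x_0$. That is how the lemma is used later, with $x_0=\theta_0e$ in Lemmas~\ref{end-point-fixing} and \ref{lem:exact:norm}. If your argument works for all $x_0$, the Li--Zhu calculus lemma gives the bubble form directly.

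\textbf{What is already right.} Your set-up, the bound $\bar\lambda\le s$, and the identification $\bar\lambda=s$ are sound. Your kernel is right and nonnegative, because
$\frac{|z-x_0|^2}{\lambda^2}|x-z^{\ast,\lambda}|^2-|x-z|^2=\frac{(|x-x_0|^2-\lambda^2)(|z-x_0|^2-\lambda^2)}{\lambda^2}$.
The extra term you worry about does not arise. Put $A(v):=G_{\ln}(v)-\frac{4}{N}\int_{\mathbb{R}^N}v^2\ln v\,dx$. The constants $\pm 4\int_{\mathbb{R}^N}u^2\ln\frac{|z-x_0|}{\lambda}\,dz$ produced by $G_{\ln}$ and by the entropy cancel. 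Hence $A(u^{\#}_\lambda)-A(u)$ equals exactly your kernel integrated against $(u^{\#}_\lambda+u)w_\lambda$ over $\Sigma_\lambda$, and the potential is $\sigma+\|u\|_2^{-2}A(u)$, just as for planes.

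\textbf{The gap: the steps you delegate to Claim~1 and Case~2 do not transfer.} The coercivity behind \eqref{contr:ineq} comes only from $\ln|x|$ being large on the support of the test function. It gives nothing on a thin shell around $\partial B_\lambda(x_0)$, which is a bounded region.

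\emph{Start at small $\lambda$.} You handle the shell by continuity. But $u^{\#}_\lambda\le u$ on $\{\lambda<|x-x_0|<r_0\}$ is equivalent to $f(\lambda^2/r)\le f(r)$ for $f(r)=r^{N/2}u(x_0+r\theta)$. This amounts to $f$ being nondecreasing near $0$, which is a derivative bound. Dini continuity does not give it, and $\mathcal{L}_\Delta$ does not supply it. The region away from the sphere is actually the easy, pointwise part. The shell needs the energy argument with Beckner's inequality centred at $x_0$. On $\{\lambda<|x-x_0|<M\lambda\}$ the weight $-2\ln|x-x_0|$ is at least $2\ln\frac{1}{M\lambda}$, while the potential stays bounded there.

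\emph{Critical radius.} At $\bar\lambda$ the shell has radius about $\bar\lambda$, so even this fails. The fact that $C_\lambda\to0$ only removes the nonlocal term. You need a small-measure coercivity estimate. For instance, \eqref{Pitts:ineq} together with Jensen gives $\int\ln|\xi|^2|\mathcal{F}(f)|^2\,d\xi\ge\bigl(c-\frac{2}{N}\ln|\supp f|\bigr)\|f\|_2^2$.

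\emph{The case $\bar\lambda=s$.} Here this estimate must be combined with the far-field argument. Moving past $s$ is impossible, so the contradiction has to come from $w_{\lambda,+}\equiv0$ against $\lim|x|^Nw_\lambda=\lambda^Nu(x_0)-u_\infty>0$.

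Case~2 of the paper does not treat the strip next to $T_\Lambda$, so it does not supply this ingredient. You also need to re-prove the strong maximum principle for Kelvin-antisymmetric functions. It does hold, by the same kernel comparison.
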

\begin{proof}
By change of variable, it is enough to show that
\begin{equation}\label{self-dual-identity-1}
    u(sx +x_0) = \frac{1}{|x|^N} u\left(x_0 + s \frac{x}{|x|^{2}}\right) \quad \text{for all} \ x \in \mathbb{R}^N.
\end{equation}
   Without loss of generality, let $x_0=0$ and $e$ be a unit vector in $\mathbb{R}^N$. Define 
   \[ 
   w(x)=|x|^{-N}u\left(\frac{x}{|x|^2}-se\right). 
   \] 
Applying Proposition \ref{pro:invariance} $(i)$ and $(iii)$, $w$ is also a solution of \eqref{main:prob} in $\mathbb{R}^N \setminus \{0\}$. Note that $0$ is the removable singularity, since $w(0) = u_\infty.$ Indeed, we have
   \[ 
   \begin{split} w(0) &=\lim_{|x|\to 0^+} w(x) =\lim_{|x|\to 0^+}|x|^{-N} u\left(\frac{x}{|x|^2}-s e\right) =\lim_{|z|\to +\infty}|z|^N u(z-s e) =u_\infty. 
   \end{split} 
   \]
Therefore, by Theorem \ref{thm-radi-mono-prop}, $w$ is radially symmetric and monotically decreasing about some point in $\mathbb{R}^N.$ Next, we show that the point is exactly $\frac{e}{2s}.$ Since, $u$ is radial about $0$, it is invariant under all rotations fixing the axis $\R e$ and these rotations commute with the inversion $x \to \frac{x}{|x|^2}$ and translation by $-se.$ Therefore, $w$ inherits axial symmetry about $\R e.$ Moreover, we have 
   \[ 
   \begin{split} w(0) =u_\infty=s^N u(0) =w\left(\frac{e}{s} \right). 
   \end{split} 
   \]
This further implies that the radial center of $w$ is $\frac{e}{2s}.$ For $h>0$, we define two points $z_1, z_2$ symmetric about $\frac{e}{2s}$ as following:
\[ z_1=\frac{e\left(\frac{1}{2}-h\right)}{s}, \qquad z_2=\frac{e\left(\frac{1}{2}+h\right)}{s} \quad \text{such that} \ z_1 + z_2 = \frac{e}{s}. \] Moreover, \[ 
    \frac{z_1}{|z_1|^2}-s e =s\frac{\left(\frac{1}{2}+h\right)}{\left(\frac{1}{2}-h\right)}e, \quad \text{and} \quad \frac{z_2}{|z_2|^2}-s e = s\frac{\left(\frac{1}{2}-h\right)}{\left(\frac{1}{2}+h\right)}e. \] Since $w$ is symmetric about $\frac{e}{2s}$, we have \[ s^N \left|\frac{1}{2}-h\right|^{-N} u\left(s\frac{\left(\frac{1}{2}+h\right)}{\left(\frac{1}{2}-h\right)}e\right) = w(z_1)=w(z_2) =  s^N\left|\frac{1}{2}+h\right|^{-N} u\left(s\frac{\left(\frac{1}{2}-h\right)}{\left(\frac{1}{2}+h\right)}e\right). \] 
Finally, by taking $t=\frac{\frac{1}{2}-h}{\frac{1}{2}+h}$, we obtain \[ u(st e)=t^{-n}u(st^{-1}e). \] Since $e$ is arbitrary, \eqref{self-dual-identity} holds for all $x \in \mathbb{R}^N.$ The general case $x_0\neq 0$ follows by translation. 
\end{proof}
Note that $u_{\sigma,t}$ defined in \eqref{extremals} satisfies $u_{\sigma,t}(x_0)(u_{\sigma,t})_\infty = e^{\frac{N}{2} (\sigma-B_{N, \mathcal{L}})}B_{N,0}^2 $ and by Lemma \ref{constants-value}, we have $\|u_{\sigma,t}\|_2 = e^{\frac{N}{4}(\sigma-B_{N,\mathcal{L}})}.$ Next, we show that every normalized solution $u$ of \eqref{main:prob} with $\|u\|_2 =e^{\frac{N}{4} (\sigma-B_{N, \mathcal{L}})}$ satisfies the same relation.
\begin{lemma}\label{end-point-fixing}
    Let $\sigma \in \mathbb{R}$ and $u$ be a positive solution of \eqref{main:prob} such that $u$ verifies $\|u\|_2 = e^{\frac{N}{4} (\sigma-B_{N, \mathcal{L}})}$ and is radial symmetric
about $x_0$, decreasing in the radial direction $|x-x_0|$, then
\[
u(x_0) u_\infty = e^{\frac{N}{2} (\sigma-B_{N, \mathcal{L}})}B_{N,0}^2.
\]
\end{lemma}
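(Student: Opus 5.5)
The natural route is to exploit the self-Kelvin-duality of Lemma \ref{lem:self-dual-iden} together with the scaling invariance of Proposition \ref{pro:invariance}(ii). These reduce the claim to an identity about a single normalized profile, which we then match against the bubble $u_{\sigma,t}$ through the equality case of Proposition \ref{new:Log-Choq:ineq}. Without loss of generality take $x_0=0$.

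\textbf{Step 1: reduce to a self-dual profile.} Set $s=(u_\infty/u(0))^{1/N}$. By Lemma \ref{lem:self-dual-iden}, $u=u^{\#}_s$. Define the $L^2$-invariant dilation $v(x)=s^{N/2}u(sx)$, which is $v_\lambda$ of Proposition \ref{pro:invariance}(ii) with $\lambda=s^{-1}$. Then $v$ solves \eqref{main:prob} with the same $\sigma$ and satisfies $\|v\|_2=\|u\|_2$. Moreover $v(0)v_\infty=u(0)u_\infty$, since the factor $s^{N/2}\cdot s^{N/2}s^{-N}$ equals $1$. Finally, $v$ is invariant under the unit Kelvin transform $v(x)=|x|^{-N}v(x/|x|^2)$. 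Thus it suffices to prove the identity for a solution fixed by inversion in the unit sphere. For such a $v$ we have $v(0)=v_\infty$, so the target becomes $v(0)=e^{\frac N4(\sigma-B_{N,\mathcal L})}B_{N,0}$, which is the value at the origin of $u_{\sigma,1}$.

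\textbf{Step 2: identify $v$ as an extremal.} The hypothesis $\|u\|_2=e^{\frac N4(\sigma-B_{N,\mathcal L})}$ means that equality holds in \eqref{solu:upperbound:estimate}. Re-examine the chain \eqref{testing-eq}: test \eqref{main:prob} with $v$ and use Proposition \ref{new:Log-Choq:ineq}. This gives
$$(\sigma-B_{N,\mathcal L})\|v\|_2^2-\tfrac4N\|v\|_2^2\ln\|v\|_2\ \ge\ 0,$$
and under the norm hypothesis the left side vanishes. Hence $v$ attains equality in \eqref{Log-Choquard-ineq-new}. By the characterization of extremals in Proposition \ref{new:Log-Choq:ineq}, $v(x)=A\,(t/(t^2+|x-y_0|^2))^{N/2}$ for some $A,t>0$ and $y_0$. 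Since $v$ is radial about $0$, we get $y_0=0$. Unit-Kelvin invariance then forces $t=1$, because inversion maps the bubble with parameter $t$ to the one with parameter $1/t$.

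\textbf{Step 3: determine $A$.} Substitute $v=A\,U_{0,1}$ into \eqref{main:prob}. By Theorem \ref{thm:bubble-solution} and the scaling identity \eqref{main:prob-modi}, $A U_{0,1}$ solves \eqref{main:prob} with parameter $\sigma'$, where $A=e^{\frac N4(\sigma'-B_{N,\mathcal L})}B_{N,0}$. The equation then holds with both $\sigma$ and $\sigma'$, so $(\sigma-\sigma')v\equiv 0$; since $v>0$, $\sigma=\sigma'$. Consequently
$$v(0)v_\infty=A^2=e^{\frac N2(\sigma-B_{N,\mathcal L})}B_{N,0}^2.$$
Pulling back through Step 1 yields the claim for $u$. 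As a cross-check, the norm condition $\|v\|_2=A B_{N,0}^{-1}$ from Lemma \ref{constants-value} gives the same value of $A$.

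\textbf{Main obstacle.} The delicate point is Step 2. We need the equality case of Proposition \ref{new:Log-Choq:ineq} in the regularity class of the solution. We must also verify that testing \eqref{main:prob} by $v$ is legitimate: $v\in\mathbb H_{\ln}$ and the decay $|x|^{-N}$ from \eqref{asym-beha} should make $\int\ln|\xi|^2|\widehat v|^2$ finite and justify the Fourier-side pairing. Only then do the extremal characterizations in Lemmas \ref{lem:Pitt's ineq} and \ref{Beckner-ineq} apply. If a direct equality-case argument is unavailable, a fallback is to use the self-duality to pass to the sphere by stereographic projection. There the Kelvin invariance becomes antipodal-type symmetry, and one can compare with the Frank--K\"onig--Tang classification.
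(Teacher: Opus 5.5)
Your argument is correct, but it takes a genuinely different route from the paper's.

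\textbf{The paper's route.} The paper never uses the equality case of Proposition \ref{new:Log-Choq:ineq} in this lemma. It runs a Chen--Li--Ou type comparison instead:
\begin{enumerate}
\item After rescaling so that $u_\infty=(u_{\sigma,1})_\infty$, it assumes $u(0)>u_{\sigma,1}(0)$.
\item It takes the first contact radius $\theta_0$ between $u$ and $u_{\sigma,1}$ along a ray.
\item It applies the self-dual Kelvin identity of Lemma \ref{lem:self-dual-iden}, centred at the contact point, iterating via $\theta_{k+1}=(\theta_k^2-1)/(2\theta_k)$ when needed.
\item This propagates the strict inequality $u>u_{\sigma,1}$ to all of $\mathbb{R}^N$, contradicting $\|u\|_2=\|u_{\sigma,1}\|_2$.
\end{enumerate}
That route relies on radial symmetry, monotonicity and self-duality, and uses the norm hypothesis only at the very end.

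\textbf{Your route.} You observe that the norm hypothesis is exactly equality in \eqref{solu:upperbound:estimate}. This gives equality throughout the chain \eqref{testing-eq}, and hence in \eqref{Log-Choquard-ineq-new}. Rigidity of the extremals then forces $u=A\,(t/(t^2+|x-x_0|^2))^{N/2}$ outright.

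\textbf{What each buys.} Your approach is shorter and proves more: it identifies $u$ as a bubble. Once the norm identity of Lemma \ref{lem:exact:norm} is available, it already yields the classification of Theorem \ref{thm-uniqueness} without any comparison argument. The price is dependence on two ingredients:
\begin{itemize}
\item the equality characterization in Proposition \ref{new:Log-Choq:ineq}, inherited from the equality cases of the log-HLS and Beckner log-Sobolev inequalities;
\item the legitimacy of testing \eqref{main:prob} with $u$.
\end{itemize}
The second is the same step the paper itself uses to derive \eqref{solu:upperbound:estimate}, so you assume nothing beyond what the paper does. The paper's geometric route avoids the equality-case analysis, but is considerably more intricate.

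\textbf{Simplification.} Your Step 1 (the self-dual reduction) and the $\sigma=\sigma'$ argument in Step 3 are unnecessary. For any bubble, $u(x_0)u_\infty=A^2$ regardless of $t$. Lemma \ref{lem:norm-compu} then gives $A=B_{N,0}\|u\|_2=e^{\frac N4(\sigma-B_{N,\mathcal L})}B_{N,0}$ directly. So neither Lemma \ref{lem:self-dual-iden} nor the monotonicity hypothesis is needed. Only the radial symmetry about $x_0$ is used, to place the centre of the bubble at $x_0$.
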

\begin{proof}
In view of Proposition \ref{pro:invariance} $(i)$ and $(ii)$, we may assume, without loss of generality, that $x_0=0$, $t=1$ and $u_\infty= e^{\frac{N}{4} (\sigma-B_{N, \mathcal{L}})}B_{N,0}.$ It remains to prove that $u(0)=  e^{\frac{N}{4} (\sigma-B_{N, \mathcal{L}})}B_{N,0}.$ We first consider the case $u(0) > e^{\frac{N}{4} (\sigma-B_{N, \mathcal{L}})}B_{N,0}.$ For a fixed unit vector $e \in \mathbb{R}^N$ and taking $t=1$ in \eqref{extremals}, define 
\[ \tilde{w}(\theta)=u(\theta e), \qquad \tilde{u}(\theta)=u_{\sigma,1}(\theta e), \qquad \theta\in\mathbb{R}. 
\] 
Since both $u$ and $u_{\sigma,1}$ are radially symmetric, the functions $\tilde{w}$ and $\tilde{u}$ are independent of the choice of $e$. By the continuity of the functions $u$ and $u_{\sigma,1}$, the normalization $\|u\|_2 = \|u_{\sigma,1}\|_2 = e^{\frac{N}{4} (\sigma-B_{N, \mathcal{L}})}$ and using the fact that $u_\infty= e^{\frac{N}{4} (\sigma-B_{N, \mathcal{L}})}B_{N,0} = (u_{\sigma,1})_\infty$, there exists $\theta_0>0$ such that \begin{equation}\label{conti-est-1} 
\tilde{w}(\theta)>\tilde{u}(\theta), \qquad |\theta|<\theta_0, \qquad \tilde{w}(\pm\theta_0)=\tilde{u}(\pm\theta_0).
\end{equation}  
Set $s^N=\frac{e^{\frac{N}{4} (\sigma-B_{N, \mathcal{L}})}B_{N,0}}{\tilde{u}(\theta_0)}$. Now, by using the definition of $u_{\sigma,1}$ and \eqref{self-dual-identity-1} with $x_0=\theta_0e$, gives $s = s_0:=\sqrt{1+\theta_0^2}$ and \begin{equation}\label{appli-lem-5.2} 
\tilde{w}(s_0 \theta+\theta_0) =|\theta|^{-N}       \tilde{w}(s_0\theta^{-1}+\theta_0), \qquad \tilde{u}(s_0\theta+\theta_0) =|\theta|^{-N} \tilde{u}(s_0\theta^{-1}+\theta_0). 
\end{equation}  
Observe that \[ -\theta_0<s_0\theta^{-1}+\theta_0<\theta_0 \Longrightarrow -\infty<s_0\theta+\theta_0 < - g(\theta_0):= - \theta_1, \quad \text{where} \ g(\theta):= \frac{1-\theta^2}{2\theta}. \] Consequently, by \eqref{conti-est-1}-\eqref{appli-lem-5.2} and the symmetry of $u$ and $u_{\sigma,t}$, we obtain 
\begin{equation}\label{extens-interval}
\tilde{w}(\theta)>\tilde{u}(\theta), \ \text{for} \ \theta \in (-\infty,- \theta_1) \cup (\theta_1, +\infty) \cup (-\theta_0, \theta_0), \quad \text{and} \quad \tilde{w}(\pm \theta_1)=\tilde{u}(\pm \theta_1), \quad \text{where} \ \theta_1:=g(\theta_0).
\end{equation} 
Note that $\theta_1 < \theta_0$ is equivalent to $ \theta_0 > \frac{1}{\sqrt{3}}.$ Therefore, if $\theta_0 > \frac{1}{\sqrt{3}}$, then 
\[
\tilde{w}(\theta)> \tilde{u}(\theta) \quad \text{for all} \ \theta \in \mathbb{R} \ \Longrightarrow \ u(x) > u_{\sigma,1}(x) \quad \text{for all} \ x \in \mathbb{R}^N
\]
which contradicts the fact that $\|u\|_2 = \|u_{\sigma,1}\|_2=.$
Next, if $\theta_0= \frac{1}{\sqrt{3}}$, then $\theta_1 = \theta_0$ and by \eqref{conti-est-1} together with \eqref{extens-interval} gives 
\[ \tilde{w}(\theta)>\tilde{u}(\theta), \qquad \theta\in(-\infty,\theta_0)\setminus\{\pm\theta_0\}, \quad \tilde{w}(\pm\theta_0)=\tilde{u}(\pm\theta_0). 
\] 
Equivalently, \[ u(x)>u_{\sigma,1}(x), \qquad x\in\mathbb{R}^N\setminus\{|x|=\theta_0\}, \ \text{and} \  u(x)=u_{\sigma,1}(x), \qquad |x|=\theta_0, \] 
which again contradicts the fact that $\|u\|_2 = \|u_{\sigma,1}\|_2=e^{\frac{N}{2} (\sigma-B_{N, \mathcal{L}})}.$ 
Now, the only case remaining is $\theta_0 <\frac{1}{\sqrt{3}}$. \\
\textbf{Case 1.} $\theta_0<\frac{1}{\sqrt{3}}$ and $\theta_1 \leq 1.$ Now, by repeating the above arguments with $s^N = \frac{e^{\frac{N}{4} (\sigma-B_{N, \mathcal{L}})}B_{N,0}}{\tilde{u}(\theta_1)}$ and using \eqref{self-dual-identity-1} with $x_0=\theta_1 e$, gives $s = s_1:=\sqrt{1+\theta_1^2}$ and \begin{equation}\label{appli-lem-5.2-1} 
\tilde{w}(s_1\theta+\theta_1) =|\theta|^{-N}       \tilde{w}(s_1\theta^{-1}+\theta_1), \qquad \tilde{u}(s_1\theta+\theta_1) =|\theta|^{-N} \tilde{u}(s_1\theta^{-1}+\theta_1). 
\end{equation}  
Observe that \[ -\infty < s_1\theta^{-1}+\theta_1< - \theta_1 \Longrightarrow - g(\theta_1) < s_1 \theta + \theta_1 < \theta_1. \] 
Note that $g(\theta_1) < \theta_0$ only if $\theta_0 < \frac{1}{\sqrt{3}}$. Consequently, by \eqref{conti-est-1}-\eqref{appli-lem-5.2-1} and the symmetry of $w$ and $u_{\sigma,1}$, we obtain 
\begin{equation}\label{whole-space}
\tilde{w}(\theta)>\tilde{u}(\theta) \quad \text{for all} \ \theta \in \mathbb{R},
\end{equation} 
which is again a contradiction to  $\|u\|_2 = \|u_{\sigma,1}\|_2=e^{\frac{N}{2} (\sigma-B_{N, \mathcal{L}})}.$ \\
\textbf{Case 2.} $\theta_0<\frac{1}{\sqrt{3}} < 1 < \theta_1.$ Define $\theta_{k+1} = \frac{\theta_{k}^2-1}{2 \theta_{k}}$ for $k \in \mathbb{N}.$ Then, we have $\theta_{k+1} < \frac{\theta_{k}}{2}$ and $\theta_k \to 0$ as $k \to \infty.$ Then, by repeating the same arguments as bove until $\theta_{k} <1$, we obtain \eqref{whole-space}, which is again a contradiction to  $\|u\|_2 = \|u_{\sigma,1}\|_2=e^{\frac{N}{2} (\sigma-B_{N, \mathcal{L}})}.$ 

Therefore, the assumption $u(0)>e^{\frac{N}{4} (\sigma-B_{N, \mathcal{L}})}B_{N,0}$ is impossible.  The case $u(0)<e^{\frac{N}{4} (\sigma-B_{N, \mathcal{L}})}B_{N,0}$ can be treated analogously, leading again to a contradiction. Hence, $u(0)=e^{\frac{N}{4} (\sigma-B_{N, \mathcal{L}})}B_{N,0}.$
\end{proof}
\begin{lemma}\label{lem:exact:norm}
    Let $u$ is a positive classical solution of \eqref{main:prob} with $\sigma \in \mathbb{R}$. Then $\|u\|_2 = e^{\frac{N}{4} (\sigma-B_{N, \mathcal{L}})}.$
\end{lemma}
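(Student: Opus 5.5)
We need to prove Lemma \ref{lem:exact:norm}: every positive classical solution has $\|u\|_2 = e^{\frac{N}{4}(\sigma-B_{N,\mathcal{L}})}$. Theorem \ref{thm-radi-mono-prop} already gives the upper bound $\|u\|_2 \le e^{\frac{N}{4}(\sigma-B_{N,\mathcal{L}})}$, so only the reverse inequality is needed.

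The plan is to normalize the mass by a constant multiple and then use Proposition \ref{pro:invariance}$(i)$. For $\lambda>0$, the function $\lambda u$ solves \eqref{main:prob} with $\sigma$ replaced by $\sigma' := \sigma - \frac{4}{N}\ln\lambda$. This is the same computation as \eqref{main:prob-modi}. Both $\|u\|_2$ and the target value $e^{\frac N4(\sigma-B_{N,\mathcal L})}$ scale by the factor $\lambda$ under this change, because
\[
e^{\frac N4(\sigma'-B_{N,\mathcal L})}=\lambda^{-1}e^{\frac N4(\sigma-B_{N,\mathcal L})}.
\]
The ratio $\|u\|_2/e^{\frac N4(\sigma-B_{N,\mathcal L})}$ is therefore a scale-invariant quantity. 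So set $\kappa := \frac{N}{4}\ln$ of the defect, so that $\|u\|_2 = e^{\frac N4(\sigma-B_{N,\mathcal L})-\kappa}$ with $\kappa\ge 0$. Our task is to show $\kappa=0$.

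The main step is a comparison with the bubble, carried out on the equality case of Proposition \ref{new:Log-Choq:ineq}. First, testing the equation with $u$ as in \eqref{testing-eq} gives
\[
\int_{\mathbb{R}^N} \ln|\xi|^2|\mathcal F(u)|^2\,d\xi-\Big[B_{N,\mathcal L}\|u\|_2^2+\tfrac{1}{\|u\|_2^2}\!\int G_{\ln}(u)u^2-\tfrac4N\!\int u^2\ln\tfrac{|u|}{\|u\|_2}\Big]=\tfrac{4}{N}\kappa\|u\|_2^2 .
\]
This says the deficit in the sharp inequality equals $\frac4N\kappa\|u\|_2^2$.

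Second, I would exploit the structure obtained in Theorem \ref{thm-radi-mono-prop} and Lemma \ref{lem:self-dual-iden}. The solution $u$ is radial about some $x_0$, decreasing, has the limit $u_\infty$ at infinity, and is invariant under its own Kelvin transform with radius $s=(u_\infty/u(x_0))^{1/N}$. Using the dilation invariance of Proposition \ref{pro:invariance}$(ii)$, which preserves $\|u\|_2$, I would normalize $s=1$ and $x_0=0$. Then $u$ is a Kelvin-self-dual radial profile. I would compare it with the unique bubble $u_{\sigma',1}$ having the same $L^2$ norm, where $\sigma'$ is chosen so that $e^{\frac N4(\sigma'-B_{N,\mathcal L})}=\|u\|_2$. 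The intended argument is a crossing argument in the style of Lemma \ref{end-point-fixing}. Radial profiles with equal mass must cross. The self-duality relations \eqref{self-dual-identity-1}, applied at the crossing points, propagate strict inequality to all of $\mathbb{R}^N$. This contradiction forces $u(0)u_\infty=\|u\|_2^2B_{N,0}^2$, and the same argument then gives $u\equiv u_{\sigma',1}$ up to dilation.

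Once $u$ coincides with a bubble, I would apply Theorem \ref{thm:bubble-solution}. The bubble $u_{\sigma',1}$ solves \eqref{main:prob} with parameter $\sigma'$, while $u$ solves it with $\sigma$. Subtracting the two equations gives $(\sigma-\sigma')u\equiv0$, so $\sigma=\sigma'$. Hence $\kappa=0$ and $\|u\|_2=e^{\frac N4(\sigma-B_{N,\mathcal L})}$.

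The obstacle I expect is justifying the crossing/self-duality step without already knowing the norm. Lemma \ref{end-point-fixing} assumes the norm identity, so its argument must be rerun with the comparison bubble chosen by matching masses rather than by matching $\sigma$. I must check that the Kelvin radius relation $s^N=u_\infty/u(x_0)$ is compatible with this choice. If that fails, the fallback is to show the deficit vanishes directly: use the self-duality (conformal invariance) to move $u$ onto the sphere via stereographic projection, where the equality case of Pitt/Beckner in Lemmas \ref{lem:Pitt's ineq} and \ref{Beckner-ineq} can be tested against the conformal automorphism family.
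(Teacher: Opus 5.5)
Your route is sound, granting the paper's earlier results, but it is organized differently from the paper's proof.

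The paper proves the norm identity directly. It takes the upper bound \eqref{solu:upperbound:estimate} from the sharp inequality and assumes it is strict. It then compares $u$ with the $\sigma$-bubble $u_{\sigma,1}$, which then has strictly larger mass, and applies Lemma \ref{lem:self-dual-iden} at the successive contact points $\theta_k e$ to reach a contradiction with the region where $u<u_{\sigma,1}$. Lemma \ref{end-point-fixing} and Theorem \ref{thm-uniqueness} come only afterwards, under the norm hypothesis.

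You reverse the order. You choose $\sigma'$ with $e^{\frac N4(\sigma'-B_{N,\mathcal L})}=\|u\|_2$. You observe that the proofs of Lemma \ref{end-point-fixing} and Theorem \ref{thm-uniqueness} use the norm hypothesis only through $\|u\|_2=\|u_{\sigma',1}\|_2$. This is because Theorem \ref{thm-radi-mono-prop} and Lemma \ref{lem:self-dual-iden} hold for any parameter, and translations, dilations and Kelvin transforms preserve both $\sigma$ and the mass. You then obtain $u=u_{\sigma',t}(\cdot-\tilde x)$ and read off $\sigma=\sigma'$ from the equation, since $u>0$. This needs neither the sharp inequality nor your deficit identity.

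It also settles the compatibility worry you flagged, provided that in the endpoint step you normalize by dilation so that $u_\infty=\|u\|_2B_{N,0}=(u_{\sigma',1})_\infty$, as the proof of Lemma \ref{end-point-fixing} does, rather than $s=1$. With that normalization, $u(\theta_k e)=u_{\sigma',1}(\theta_k e)$ at every contact point forces the two Kelvin radii $(u_\infty/u(\theta_k e))^{1/N}$ to agree, so a single inversion acts on both functions.

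The paper's direct comparison lacks this feature. Its normalization $u(0)>e^{\frac N4(\sigma-B_{N,\mathcal L})}B_{N,0}>u_\infty$ makes $u_\infty<(u_{\sigma,1})_\infty$. Lemma \ref{lem:self-dual-iden} at $\theta_0 e$ therefore gives $u$ a radius strictly smaller than the $\sqrt{1+\theta_0^2}$ used for both functions in \eqref{eq:inversion-at-iota0}. What the paper's route buys is that the comparison bubble stays tied to $\sigma$ and the sharp inequality supplies the one-sided information. Yours instead turns the lemma into a corollary of a mass-normalized classification.

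There are two minor points. First, by \eqref{main:prob-modi}, $\lambda u$ solves \eqref{main:prob} with $\sigma+\frac4N\ln\lambda$, not $\sigma-\frac4N\ln\lambda$. With your sign, the target would scale by $\lambda^{-1}$, as your own display shows, which contradicts the sentence before it; that paragraph is not used later anyway. Second, both routes invoke Lemma \ref{lem:self-dual-iden} at centers $\theta_k e$ other than the center of symmetry. That is how the lemma is stated, but its proof relies on the axial symmetry of $u$ about a line through its center, so this is the step that carries the real weight in either argument. The stereographic fallback is not needed.
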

\begin{proof}
By Theorem \ref{thm-radi-mono-prop}, $u$ is radially symmetric and monotonically decreasing about some point $z_0$ in $\mathbb{R}^N$ and 
\begin{equation}\label{beha-est}
      \lim_{|x| \to \infty} u(x) |x|^N = u_\infty. 
\end{equation} In view of Proposition \ref{pro:invariance} (i), we can assume that $z_0 =0.$
   Since, we know that $u_a(x) =a^{\frac{-N}{2}}u(x/a)$, $a>0$ is also a solution of the problem \eqref{main:prob} and satisfies $\|u_a\|_2 = \|u\|_2$. We can assume that
   \begin{equation}\label{ineq:est}
       u(0) > e^{\frac{N}{4} (\sigma-B_{N, \mathcal{L}})} B_{N,0} > u_\infty.
   \end{equation}
    Now, for an arbitrary unit vector $e \in \mathbb{R}^N$ and $t=1$, $x_0=0$, denote
    \[
    \tilde{w}(\theta) = u(\theta e), \quad \tilde{u}(\theta) = u_1(\theta e), \quad \theta \in \mathbb{R}.
    \]
By \eqref{solu:upperbound:estimate}, we know that $\|u\|_2 \leq e^{\frac{N}{4} (\sigma-B_{N, \mathcal{L}})}.$ Next, we show that $\|u\|_2=e^{\frac{N}{4} (\sigma-B_{N, \mathcal{L}})}$ and then by Theorem \ref{thm-uniqueness}, we obtain the required uniqueness claim. Suppose that $\|u\|_2 \in (0,e^{\frac{N}{4} (\sigma-B_{N, \mathcal{L}})}).$ Then, due to the scaling properties in Proposition \ref{pro:invariance}, radial symmetricity and continuity of $u$, and \eqref{ineq:est}, there exists $\theta_0 >0$ such that
\begin{equation}\label{eq:first-contact}
    \tilde{w}(\theta_0)=\tilde{u}(\theta_0),
    \qquad
    \tilde{w}(\theta) > \tilde{u}(\theta)
    \quad\text{for }|\theta|<\theta_0.
\end{equation}
and $0<r_1<r_2\leq+\infty$ for which
\begin{equation}\label{eq:negative-region}
    \tilde{w}(\theta)<\tilde{u}(\theta)
    \qquad\text{for }\theta \in (r_1,r_2), \qquad \tilde{w}(r_i)=\tilde{u}(r_i),\quad i=1,2.
\end{equation}
Next, we prove that \eqref{eq:negative-region} is impossible. Set $s:=\left(\frac{e^{\frac{N}{4} (\sigma-B_{N, \mathcal{L}})} B_{N,0}}{u_0(\theta_0)}\right)^{1/N}
      =\sqrt{1+\theta_0^2}.$
Applying Lemma~\ref{lem:self-dual-iden} with center
$x_0=\theta_0e$, we obtain
\begin{equation}\label{eq:inversion-at-iota0}
\tilde{w}(s\theta+\theta_0) =|\theta|^{-N} \tilde{w}(s\theta^{-1}+\theta_0), \qquad \tilde{u}(s\theta+\theta_0) =|\theta|^{-N} \tilde{u}(s\theta^{-1}+\theta_0).
\end{equation}
Observe that the condition $-\theta_0<s\theta^{-1}+\theta_0<\theta_0$ is equivalent to $-\infty<s\theta+\theta_0
    <-\theta_1$ where $\theta_1=g(\theta_0)$ and $g(\theta):= \frac{1-\theta^2}{2\theta}.$
Consequently, \eqref{eq:first-contact} and
\eqref{eq:inversion-at-iota0} imply
\begin{equation}\label{eq:transferred-inequality}
    \tilde{w}(\theta)>\tilde{u}(\theta)
    \quad\text{for }\theta \in (-\infty,-\theta_1) \cup (-\theta_0, \theta_0) \cup (\theta_1, \infty),
    \qquad
    \tilde{w}(\pm\theta_i)=\tilde{u}(\pm\theta_i), \, i=0,1.
\end{equation}
We distinguish four cases.\\
\textbf{Case 1: $\theta_0 \geq 1$.}
In this case, $0\geq\theta_1>-\theta_0.$ Hence \eqref{eq:first-contact} gives $\tilde{w}(\theta_1) > \tilde{u}(\theta_1),$
whereas \eqref{eq:transferred-inequality} gives $\tilde{w}(\theta_1)=\tilde{u}(\theta_1),$ which is a contradiction.\\
\noindent
\textbf{Case 2: $\frac{1}{\sqrt{3}}<\theta_0<1$.}
From \eqref{eq:transferred-inequality}, we have $0<\theta_1<\theta_0.$ Thus \eqref{eq:first-contact} yields $\tilde{w}(\theta_1)>\tilde{u}(\theta_1),$ again contradicting $\tilde{w}(\theta_1)=\tilde{u}(\theta_1)$ from \eqref{eq:transferred-inequality}.\\
\textbf{Case 3: $\theta_0=\frac{1}{\sqrt{3}}$.}
Then $\theta_1=\theta_0$. Combining
\eqref{eq:first-contact}, \eqref{eq:transferred-inequality}, and the radial symmetry of $u$ and $u_1$, we obtain
\[
    u(x)>u_1(x)
    \quad\text{for every }
    x\in\mathbb{R}^n\setminus
    \{x\in\mathbb{R}^n:|x|=\theta_0\}, \quad u(x)=u_1(x)
    \qquad\text{when }|x|=\theta_0.
\]
This contradicts the existence of the interval in \eqref{eq:negative-region} on which $\tilde{w}<\tilde{u}$. Now, the only case remaining is $\theta_0 <\frac{1}{\sqrt{3}}$. \\
\textbf{Case 4.} $\theta_0<\frac{1}{\sqrt{3}}$ and $\theta_1 \leq 1.$ Now, by repeating the above arguments with $s= \left(\frac{e^{\frac{N}{4} (\sigma-B_{N, \mathcal{L}})} B_{N,0}}{\tilde{u}(\theta_1)}\right)^\frac{1}{N}$ and applying Lemma~\ref{lem:self-dual-iden} with $x_0=\theta_1 e$, gives $s = s_1:=\sqrt{1+\theta_1^2}$ and \begin{equation}\label{appli-lem-5.2-1-modi} 
\tilde{w}(s_1\theta+\theta_1) =|\theta|^{-N}       \tilde{w}(s_1\theta^{-1}+\theta_1), \qquad \tilde{u}(s_1\theta+\theta_1) =|\theta|^{-N} \tilde{u}(s_1\theta^{-1}+\theta_1). 
\end{equation}  
Observe that \[ -\infty < s_1\theta^{-1}+\theta_1< - \theta_1 \Longrightarrow - g(\theta_1) < s_1 \theta + \theta_1 < \theta_1 \] 
Note that $g(\theta_1) < \theta_0$ only if $\theta_0 < \frac{1}{\sqrt{3}}$. Consequently, by \eqref{eq:transferred-inequality} and \eqref{appli-lem-5.2-1-modi} and the symmetry of $w$ and $u_1$, we obtain 
\begin{equation}\label{whole-space-new}
\tilde{w}(\theta)>\tilde{u}(\theta) \quad \text{for all} \ \theta \in \mathbb{R},
\end{equation} 
which is again a contradiction to  \eqref{eq:negative-region}.\\
\textbf{Case 5.} $\theta_0<\frac{1}{\sqrt{3}} < 1 < \theta_1.$ Define $\theta_{k+1} = \frac{\theta_{k}^2-1}{2 \theta_{k}}$ for $k \in \mathbb{N}.$ Then, we have $\theta_{k+1} < \frac{\theta_{k}}{2}$ and $\theta_k \to 0$ as $k \to \infty.$ Then, by repeating the same arguments as bove until $\theta_{k} <1$, we obtain \eqref{whole-space-new}, which is again a contradiction to  \eqref{eq:negative-region}. Consequently, the alternative $\|u\|_{2} < e^{\frac{N}{4} (\sigma-B_{N, \mathcal{L}})} $
cannot occur. Hence, the claim.
\end{proof}
\noindent 
\textbf{Proof of Theorem \ref{thm-uniqueness}:}
In view of Lemma \ref{end-point-fixing} and without loss of generality thanks to Proposition \ref{pro:invariance} $(i)$-$(ii)$ and Lemma \ref{lem:exact:norm}, assume that $u$ is a solution of \eqref{main:prob} satisfying $\|u\|_{2}=e^{\frac{N}{4} (\sigma-B_{N, \mathcal{L}})}$ and $u(0)=u_\infty=e^{\frac{N}{4} (\sigma-B_{N, \mathcal{L}})}B_{N,0}.$ It remains to prove that $ u \equiv u_{\sigma,1}$ in $\mathbb{R}^N.$ For an arbitrary unit vector $e\in\mathbb{R}^N$ and $t=1$, define \[ \tilde{w}(\theta)=u(\theta e), \qquad \tilde{u}(\theta)=u_{\sigma,1}(\theta e), \qquad \theta\in\mathbb{R}. \] 
Suppose, by contradiction, that there exists $\theta_0>0$ such that $\tilde{w}(\theta_0)>\tilde{u}(\theta_0).$ Since $u(0)=w_\infty= e^{\frac{N}{4} (\sigma-B_{N, \mathcal{L}})}B_{N,0},$ and applying Lemma \ref{lem:self-dual-iden} with $s=1$, $x_0=0$, and $x=\theta e$, yields \[
\tilde{w}(\theta)=|\theta|^{-N}\tilde{w}(\theta^{-1}), \qquad \tilde{u}(\theta)=|\theta|^{-N}\tilde{u}(\theta^{-1}), \qquad \theta \neq 0. 
\] 
Define 
\[ 
\widetilde{w}_0(\theta) = |\theta|^{-N} \tilde{w}(\theta^{-1}+\theta_0), \qquad \widetilde{u}_0(\theta) = |\theta|^{-N} \tilde{u}(\theta^{-1}+\theta_0). 
\]  
A direct computation shows that 
\begin{equation}\label{center-est}
    \widetilde{u}_0\!\left( -\frac{\theta_0}{1+\theta_0^2} \right) = e^{\frac{N}{4} (\sigma-B_{N, \mathcal{L}})}B_{N,0} (1+\theta_0^2)^\frac{N}{2} = (1+\theta_0^2)^N \widetilde{u}_0(\theta_0)\quad \text{and} \quad  \widetilde{w}_0\!\left( -\frac{\theta_0}{1+\theta_0^2} \right) = (1+\theta_0^2)^N \widetilde{w}_0(\theta_0). 
\end{equation}
Furthermore, 
\[ 
\widetilde{u}_0(0) = \lim_{\theta\to0} |\theta|^{-N} \tilde{u}(\theta^{-1}+\theta_0) =  e^{\frac{N}{4} (\sigma-B_{N, \mathcal{L}})}B_{N,0}, \quad  \widetilde{u}_0\!\left( -\frac{2\theta_0}{1+\theta_0^2} \right) = e^{\frac{N}{4} (\sigma-B_{N, \mathcal{L}})}B_{N,0}. 
\]  
Observe that $-\frac{\theta_0}{1+\theta_0^2}$ is the center of the radial function $\widetilde{u}_0$. In view of Theorem \ref{thm-radi-mono-prop}, let $\delta_0$ denote the center of $\widetilde{w}_0$. By \eqref{center-est} and since $\tilde{w}(\theta_0)>\tilde{u}(\theta_0)$, we obtain
\begin{equation}\label{center-lower-bound}
\widetilde{w}_0(\delta_0) \ge \widetilde{w}_0\!\left( -\frac{\theta_0}{1+\theta_0^2} \right) > \widetilde{u}_0\!\left( -\frac{\theta_0}{1+\theta_0^2} \right).\end{equation} 
Moreover, \[ (\widetilde{w}_0)_\infty := \lim_{\theta\to\infty} |\theta|^N\widetilde{w}_0(\theta) = \lim_{\theta\to\infty}  \tilde{w}(\theta^{-1}+\theta_0) = \tilde{w}(\theta_0), \quad (\widetilde{u}_0)_\infty = \lim_{\theta\to\infty} |\theta|^N \widetilde{u}_0(\theta) = \tilde{u}(\theta_0) \] which further implies $(\widetilde{w}_0)_\infty > (\widetilde{u}_0)_\infty.$ This together with \eqref{center-lower-bound}, we conclude that \[ \widetilde{w}_0(\delta_0) (\widetilde{w}_0)_\infty > \widetilde{u}_0\!\left( -\frac{\theta_0}{1+\theta_0^2} \right) (\widetilde{u}_0)_\infty = e^{\frac{N}{2} (\sigma-B_{N, \mathcal{L}})}B_{N,0}^2, \] 
which contradicts the claim in Lemma \ref{end-point-fixing}.  Hence, $\tilde{w}(\theta)\le \tilde{u}(\theta)$ for all $\theta\in\mathbb{R}$, or equivalently, $u(x)\le u_1(x)$ for all $x\in\mathbb{R}^N.$ If there exists $\theta_0>0$ such that \[ \tilde{w}(\theta_0)<\tilde{u}(\theta_0) \] is treated in exactly the same way and also leads to a contradiction. Since both $w$ and $u_{\sigma,1}$ satisfy $\|u\|_2 = \|u_{\sigma,1}\|_2= e^{\frac{N}{4} (\sigma-B_{N, \mathcal{L}})}$, it follows that $u \equiv u_{\sigma,1}$ in  $\mathbb{R}^N.$ Hence, the claim.
\qed
\appendix

\renewcommand{\thesection}{A}
\setcounter{section}{0}

\numberwithin{equation}{section}
\renewcommand{\theequation}{A.\arabic{equation}}
\setcounter{equation}{0}
\renewcommand{\thetheorem}{A.\arabic{theorem}}
\setcounter{theorem}{0}
\section{Appendix}\label{appendix}
Denote \begin{equation}\label{constants-value}
        B_{N,0} = \left(\frac{\Gamma(N)}{\pi^{\frac{N}{2}}\Gamma\left(\frac{N}{2}\right)}\right)^\frac{1}{2} \quad \text{and} \quad B_{N,1} = B_{N,0} \left[ \ln 2 + 2 \psi\left(\frac{N}{2}\right) - \psi(N)\right].
    \end{equation}
\begin{lemma}\label{lem:norm-compu}
    For any $t>0$, $x_0 \in \mathbb{R}^N$ and $s \in [0, 1)$, the following holds:
\begin{equation}\label{est:crit-choq-2}
    \|U_{s,t}\|_{2_s^\ast}^{2_s^\ast} = \frac{\pi^{\frac{N}{2}} \Gamma\left(\frac{N}{2}\right)}{\Gamma(N)} \quad \text{and} \quad \|U_{s,t}\|_{2}^{2} = \frac{\pi^{\frac{N}{2}} t^{2s} \Gamma\left(\frac{N-4s}{2}\right)}{\Gamma(N-2s)}.
\end{equation}
In particular, $\|U_{0,1}\|_2 = \|U_{0,t}\|_2 =\frac{1}{B_{N,0}}.$
\end{lemma}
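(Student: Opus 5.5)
The plan is to reduce both integrals to a single Beta-function computation, after normalizing $x_0$ and $t$. By translation invariance of Lebesgue measure we take $x_0=0$. Then we substitute $x=ty$, so that
\[
U_{s,t}(ty)=t^{-\frac{N-2s}{2}}\,U_{s,1}(y).
\]
For the $2_s^\ast$-norm, the exponent of $t$ is $-\frac{N-2s}{2}\cdot\frac{2N}{N-2s}+N=0$, so $\|U_{s,t}\|_{2_s^\ast}$ does not depend on $t$. For the $L^2$ norm the exponent is $-(N-2s)+N=2s$, which produces the factor $t^{2s}$. It therefore suffices to treat $t=1$.

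For $t=1$ we have $U_{s,1}^{2_s^\ast}=(1+|y|^2)^{-N}$ and $U_{s,1}^2=(1+|y|^2)^{-(N-2s)}$. Both reduce to the standard formula
\[
\int_{\mathbb{R}^N}(1+|y|^2)^{-a}\,dy=\pi^{\frac N2}\,\frac{\Gamma\left(a-\frac N2\right)}{\Gamma(a)},\qquad a>\tfrac N2 .
\]
To prove this formula, pass to polar coordinates with $\omega_{N-1}=2\pi^{N/2}/\Gamma(N/2)$ and substitute $r^2=\tau/(1-\tau)$. This turns the radial integral into $\frac12 B\left(\frac N2,a-\frac N2\right)$.

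With $a=N$ the formula gives $\pi^{N/2}\Gamma(N/2)/\Gamma(N)$. With $a=N-2s$ it gives $\pi^{N/2}\Gamma\left(\frac{N-4s}{2}\right)/\Gamma(N-2s)$. The only point needing care is the integrability condition $a>N/2$. This is automatic for $a=N$, but for $a=N-2s$ it requires $s<N/4$. That holds on the range actually used, $s\in[0,\frac14)\subset[0,N/4)$ since $N\ge1$. I would state this restriction explicitly, since for larger $s$ and small $N$ the $L^2$ norm is infinite.

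Finally, setting $s=0$ gives $\|U_{0,t}\|_2^2=\pi^{N/2}\Gamma(N/2)/\Gamma(N)$ independently of $t$. Comparing with \eqref{constants-value} yields $\|U_{0,t}\|_2=1/B_{N,0}$. There is no real obstacle here beyond bookkeeping of exponents.
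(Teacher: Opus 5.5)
Your proposal is correct and follows essentially the same route as the paper. You remove $x_0$ and $t$ by translation and the dilation $x=ty$, which isolates the factor $t^{2s}$, and then use polar coordinates and the Beta integral $\frac{\omega_{N-1}}{2}B\bigl(\frac N2,a-\frac N2\bigr)$ with $a=N$ and $a=N-2s$. Your observation that the $L^2$ identity requires $s<\frac N4$, so that the stated range $s\in[0,1)$ is too generous when $N\le 3$, is a valid correction that the paper omits; it is harmless for the range $s\in[0,\frac14)$ actually used in Lemma~\ref{crit-choq-solu}.
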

\begin{proof}
By applying change of variables multiple times, we obtain
    \[
    \begin{split}
        \|U_{s,t}\|_{2_s^\ast}^{2_s^\ast} &= \int_{\mathbb{R}^N} \left(\frac{t}{t^2+|x-x_0|^2}\right)^{N} ~dx = \int_{\mathbb{R}^N} t^{-N} \left(\frac{t}{1+\left|\frac{x-x_0}{t}\right|^2}\right)^{N} ~dx \\
        & = \int_{\mathbb{R}^N} \left(\frac{1}{1+|y|^2}\right)^{N} ~dy =  \|U_{s,1}\|_{2_s^\ast}^{2_s^\ast} = \omega_{N-1} \int_0^\infty \frac{y^{N-1}}{(1+y^2)^{N}} ~dy\\
        & = \frac{\omega_{N-1}}{2} \int_0^\infty \frac{y^{\frac{N}{2}-1}}{(1+y)^{N}} ~dy = \frac{\omega_{N-1}}{2} B\left(\frac{N}{2}, \frac{N}{2}\right) = \frac{\pi^{\frac{N}{2}} \Gamma \left(\frac{N}{2}\right)}{\Gamma(N)}
    \end{split}
    \]
and
    \[
    \begin{split}
        \|U_{s,t}\|_2^2 &= \int_{\mathbb{R}^N} \left(\frac{t}{t^2+|x-x_0|^2}\right)^{N-2s} ~dx = \int_{\mathbb{R}^N} t^{-2N+4s} \left(\frac{t}{1+\left|\frac{x-x_0}{t}\right|^2}\right)^{N-2s} ~dx \\
        & = t^{2s} \int_{\mathbb{R}^N} \left(\frac{1}{1+|y|^2}\right)^{N-2s} ~dy = t^{2s} \|U_{s,1}\|_2^2 = t^{2s}\omega_{N-1} \int_0^\infty \frac{y^{N-1}}{(1+y^2)^{N-2s}} ~dy\\
        & = \frac{t^{2s} \omega_{N-1}}{2} \int_0^\infty \frac{y^{\frac{N}{2}-1}}{(1+y)^{N-2s}} ~dy = \frac{t^{2s} \omega_{N-1}}{2} B\left(\frac{N}{2}, \frac{N}{2}-2s\right) = \frac{t^{2s} \pi^{\frac{N}{2}} \Gamma \left(\frac{N}{2}-2s\right)}{\Gamma(N-2s)}
    \end{split}
    \]
where $B(\cdot, \cdot)$ denotes the beta function and $\omega_{N-1} = \frac{2 \pi^\frac{N}{2}}{\Gamma \left(\frac{N}{2}\right)}$. 
\end{proof}
\begin{lemma}\label{lem:constants}
    Let $B_{N,s}$ be defined in \eqref{frac-choquard-constant}. Then $\lim_{s \to 0^+} B_{N,s} = B_{N,0}$ and 
\[B_{N,s} = B_{N,0} + s B_{N,1} + o(s) \ \text{as} \ s \to 0^+.\] 
\end{lemma}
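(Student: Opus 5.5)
The plan is to treat $B_{N,s}$ as a smooth function of $s$ near $0$ and Taylor expand its logarithm. By \eqref{frac-choquard-constant},
\[
B_{N,s}^2 = F(s) := \frac{2^{2s}\Gamma\left(\frac{N+2s}{2}\right)\Gamma(N-2s)}{\pi^{\frac{N}{2}}\Gamma\left(\frac{N-2s}{2}\right)\Gamma\left(\frac{N-4s}{2}\right)}.
\]
For $s\in[0,\frac14)$ every Gamma argument stays in $[\frac{N}{2}-2s,N]\subset(0,\infty)$ when $N\ge1$. Hence $F$ is positive and real-analytic on a neighbourhood of $s=0$.

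First, I evaluate $F(0)=\frac{\Gamma(N/2)\Gamma(N)}{\pi^{N/2}\Gamma(N/2)^2}=\frac{\Gamma(N)}{\pi^{N/2}\Gamma(N/2)}=B_{N,0}^2$. Continuity of $s\mapsto \sqrt{F(s)}$ then gives $\lim_{s\to0^+}B_{N,s}=B_{N,0}$.

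Second, I differentiate $\ln F$ at $s=0$, using $\frac{d}{ds}\ln\Gamma(a+bs)=b\,\psi(a+bs)$:
\[
\frac{d}{ds}\ln F(s)\Big|_{s=0} = 2\ln 2 + \psi\!\left(\tfrac N2\right) - 2\psi(N) + \psi\!\left(\tfrac N2\right) + 2\psi\!\left(\tfrac N2\right) = 2\ln2 + 4\psi\!\left(\tfrac N2\right) - 2\psi(N).
\]
Since $\ln B_{N,s}=\frac12\ln F(s)$, this gives
\[
\frac{d}{ds}B_{N,s}\Big|_{s=0}=B_{N,0}\left[\ln 2+2\psi\!\left(\tfrac N2\right)-\psi(N)\right]=B_{N,1},
\]
which matches \eqref{constants-value}.

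Third, because $s\mapsto B_{N,s}$ is $C^1$ (in fact $C^2$) on $[0,\frac14)$, the first-order Taylor theorem yields $B_{N,s}=B_{N,0}+sB_{N,1}+o(s)$ as $s\to0^+$. The remainder is in fact $O(s^2)$, with a constant bounded by $\sup|\partial_s^2 B_{N,s}|$, which involves only $\psi$ and $\psi'$ on a compact subinterval of $(0,\infty)$.

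The only delicate point is the sign bookkeeping in the digamma derivative. The factor $\Gamma\left(\frac{N-4s}{2}\right)$ sits in the denominator and its argument carries the coefficient $-2$, so it contributes $+2\psi(N/2)$. The factor $\Gamma\left(\frac{N-2s}{2}\right)$ in the denominator contributes $+\psi(N/2)$. The factor $\Gamma(N-2s)$ in the numerator contributes $-2\psi(N)$. Checking these signs against the constant $B_{N,\mathcal L}=2\ln2+4\psi(N/2)-2\psi(N)+\frac4N\ln B_{N,0}$ stated in the abstract confirms consistency, since $2B_{N,1}/B_{N,0}$ equals exactly the derivative computed above.
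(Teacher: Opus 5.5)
Your proposal is correct and follows essentially the same route as the paper: write $B_{N,s}$ as the square root of the Gamma quotient, differentiate its logarithm using $\psi=\Gamma'/\Gamma$, evaluate at $s=0$ to get $B_{N,0}\left[\ln 2+2\psi\left(\tfrac N2\right)-\psi(N)\right]=B_{N,1}$, and conclude by first-order Taylor expansion. Your explicit checks that $F(0)=B_{N,0}^2$ and that all Gamma arguments stay positive (so $s\mapsto B_{N,s}$ is smooth near $0$) fill in details the paper leaves implicit.
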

\begin{proof} 
Denote
\[
D_{N,s} := \frac{2^{2s}\Gamma\left(\frac{N+2s}{2}\right)\Gamma(N-2s)}
{\pi^{N/2}\Gamma\left(\frac{N-2s}{2}\right)\Gamma\left(\frac{N-4s}{2}\right)} \quad \text{such that} \ B_{N,s}=D_{N,s}^{1/2}.
\]
Differentiating with respect to \(s\), we obtain
\[
\frac{d}{ds}B_{N,s} =
\frac{1}{2} D_{N,s}^{-1/2}\frac{dD_{N,s}}{ds} = \frac{1}{2} B_{N,s}\frac{D_{N,s}'}{D_{N,s}}
\]
with the notation $D_{N,s}'=\frac{dD_{N,s}}{ds}$. Now, by taking logarithms yields
\[
\begin{split}
\log D_{N,s} & =
2s\log 2
+
\log\Gamma\left(\frac{N+2s}{2}\right)
+
\log\Gamma(N-2s) -\frac N2\log\pi-
\log\Gamma\left(\frac{N-2s}{2}\right) - \log\Gamma\left(\frac{N-4s}{2}\right).
\end{split}
\]
Using the identity $\psi(x) = \frac{d}{dx}\log\Gamma(x)$ and differentiating with respect to $s$, we obtain
\[
\frac{D_{N,s}'}{D_{N,s}}
=
2\log2
+
\psi\left(\frac{N+2s}{2}\right)
-2\psi(N-2s)
+
\psi\left(\frac{N-2s}{2}\right)
+
2\psi\left(\frac{N-4s}{2}\right).
\]
Therefore,
\[
\frac{d}{ds}B_{N,s}
=
B_{N,s}
\left[
\log2
+\frac{1}{2}\psi\left(\frac{N+2s}{2}\right)
-\psi(N-2s)
+\frac{1}{2}\psi\left(\frac{N-2s}{2}\right)
+\psi\left(\frac{N-4s}{2}\right)
\right].
\]
Finally, by inserting $s=0$, we obtain the required claim.
\end{proof}
Next, we recall the Hardy-Littlewood Sobolev inequality \cite[Theorem 3.1]{Lieb-1983} and  Pitt's inequality by Beckner \cite[Theorem 3]{Beckner-1995} and Beckner's entropy inequality \cite[Theorem~2]{Beckner-1993}, adapted to the setting of our problem:
\begin{theorem}
    Let $N> 4s$ and $u \in L^{2_s^\ast}(\mathbb{R}^N).$ Then there exists a sharp constant $C_{N,s}$, independent of $u$, such that
    \begin{equation}\label{HLS-inequality}
        \int_{\mathbb{R}^N} \int_{\mathbb{R}^N} \frac{u^2(y)}{|x-y|^{4s}}  u^2(x) ~dy ~dx \leq C_{N,s} \|u\|_{2_s^\ast}^4 \quad \text{with} \quad C_{N,s} = \pi^{2s} \frac{\Gamma\left(\frac{N-4s}{2}\right)}{\Gamma\left(N-2s\right)} \left(\frac{\Gamma\left(\frac{N}{2}\right)}{\Gamma(N)}\right)^{-1+\frac{4s}{N}}.
    \end{equation}
Moreover, the equality in \eqref{HLS-inequality} is achieved uniquely (up to a multiplication by a constant) by the function $U_{s,t}.$ 
\end{theorem}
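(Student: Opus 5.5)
The plan is to obtain the statement as the diagonal case of Lieb's sharp Hardy--Littlewood--Sobolev inequality \cite[Theorem 3.1]{Lieb-1983}. I will apply it with $f=g=u^2$ and kernel exponent $\lambda=4s$. Recall that Lieb's theorem says: for $0<\lambda<N$ and $p=\frac{2N}{2N-\lambda}$,
\[
\int_{\mathbb{R}^N}\int_{\mathbb{R}^N}\frac{f(x)f(y)}{|x-y|^{\lambda}}\,dx\,dy\le C(N,\lambda)\|f\|_p^2,
\qquad
C(N,\lambda)=\pi^{\lambda/2}\frac{\Gamma\left(\frac{N-\lambda}{2}\right)}{\Gamma\left(N-\frac{\lambda}{2}\right)}\left(\frac{\Gamma\left(\frac{N}{2}\right)}{\Gamma(N)}\right)^{-1+\frac{\lambda}{N}} .
\]
Equality holds if and only if $f=c\,(t^2+|x-x_0|^2)^{-(2N-\lambda)/2}$ for some $c\in\mathbb{R}$, $t>0$, $x_0\in\mathbb{R}^N$.

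The steps are as follows.

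\emph{Admissibility.} The hypothesis $N>4s$ (with $s>0$) gives exactly $0<\lambda<N$. With $\lambda=4s$ the exponent is $p=\frac{2N}{2N-4s}=\frac{N}{N-2s}=\frac{2_s^\ast}{2}$. Hence for $u\in L^{2_s^\ast}(\mathbb{R}^N)$ the function $f=u^2$ lies in $L^p$, and
\[
\|u^2\|_p=\Big(\int_{\mathbb{R}^N}|u|^{2_s^\ast}\,dx\Big)^{2/2_s^\ast}=\|u\|_{2_s^\ast}^2 .
\]
The right-hand side $C(N,4s)\|u^2\|_p^2$ therefore equals $C(N,4s)\|u\|_{2_s^\ast}^4$.

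\emph{Constant.} Substituting $\lambda=4s$ into $C(N,\lambda)$ gives $\pi^{2s}$, $\Gamma\left(\frac{N-4s}{2}\right)$, $\Gamma(N-2s)$ and the power $-1+\frac{4s}{N}$. This is precisely $C_{N,s}$ in \eqref{HLS-inequality}.

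\emph{Optional consistency check.} One can insert $u=U_{s,1}$ and use Lemma \ref{lem:norm-compu}, which gives $\|U_{s,1}\|_{2_s^\ast}^{2_s^\ast}=\pi^{N/2}\Gamma(N/2)/\Gamma(N)$. The double integral can be evaluated via the semigroup property of Riesz potentials: $|x|^{-4s}\ast(1+|x|^2)^{-(N-2s)}$ is an explicit multiple of $(1+|x|^2)^{-2s}$. This confirms equality is attained, but it is not needed once Lieb's theorem is invoked.

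\emph{Equality case.} For equality we need $u^2$ to be a Lieb extremal, i.e. $u^2=c\,(t^2+|x-x_0|^2)^{-(N-2s)}$. This gives $|u|=c'\,U_{s,t}$, because $(2N-\lambda)/2=N-2s$ and $U_{s,t}^2=t^{N-2s}(t^2+|x-x_0|^2)^{-(N-2s)}$. The extremal is thus $U_{s,t}$ up to a constant multiple (together with the translation and dilation parameters $x_0,t$ already built into $U_{s,t}$).

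The one delicate point is exactly this equality statement. The left-hand side depends on $u$ only through $u^2$, so the inequality by itself only forces $|u|=c\,U_{s,t}$. To say that $u$ itself is a multiple of $U_{s,t}$, I would restrict to nonnegative (or fixed-sign) $u$. Alternatively, if $u$ is continuous, I would note that $U_{s,t}$ never vanishes, so $u$ has constant sign. This is the setting in which the paper uses the result, for positive bubbles in Lemma \ref{crit-choq-solu}. Everything else is a direct specialization of Lieb's theorem.
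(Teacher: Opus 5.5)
Your proposal is correct and matches the paper, which gives no separate argument but simply recalls Lieb's sharp Hardy--Littlewood--Sobolev inequality \cite[Theorem 3.1]{Lieb-1983} in the diagonal case $f=g=u^2$, $\lambda=4s$, $p=\frac{N}{N-2s}=\frac{2_s^\ast}{2}$; your checks of the exponent, of the constant $C_{N,s}$, and of the extremal profile $(t^2+|x-x_0|^2)^{-(N-2s)}=t^{-(N-2s)}U_{s,t}^2$ are all accurate. Your caveat on the equality case is also well founded: since the left-hand side depends only on $u^2$, equality forces only $|u|=c\,U_{s,t}$, so the paper's uniqueness clause should be read for fixed-sign $u$ and $s>0$, which is the only setting where it is used (Lemma \ref{crit-choq-solu}).
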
 
\begin{lemma}[Pitt's inequality]\label{lem:Pitt's ineq}
    Let $u \in \mathbb{H}_{\ln}(\mathbb{R}^N)$ with $\|u\|_2 =1.$
    Then, we have
\begin{equation}\label{Pitts:ineq}
    \frac{4}{N} \int_{\mathbb{R}^N} |u(x)|^2 \ln |u(x)| ~d\xi \leq  \int_{\mathbb{R}^N} \ln |\xi|^2 |\mathcal{F}_0(u)|^2 ~dx - \frac{4 G_N}{N}
\end{equation}
where 
$G_N = \frac{N}{2} \psi\left(\frac{N}{2}\right) - \frac{N}{4} \ln \pi - \frac{1}{2} \ln\left(\frac{\Gamma(N)}{\Gamma\left(\frac{N}{2}\right)}\right)$. Moreover, the extremal functions are given up to conformal automorphism by $u(x)= A \left(1+ |x|^2\right)^{\frac{-N}{2}}$ for some $A  \in \mathbb{R}^+.$
\end{lemma}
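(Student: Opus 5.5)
The plan is to obtain \eqref{Pitts:ineq} as the first-order expansion at $s=0$ of the sharp fractional Sobolev inequality, and to take the equality cases from Beckner's result \cite[Theorem 3]{Beckner-1995}. I first work with $u\in C_c^\infty(\mathbb{R}^N)$, $\|u\|_2=1$, and reach $\mathbb{H}_{\ln}(\mathbb{R}^N)$ by density at the end. In the normalization of $\mathcal{F}_0$, Plancherel holds without constants and $(-\Delta)^{s}$ has symbol $(2\pi|\xi|)^{2s}$. For $s\in(0,\frac{N}{2})$ the sharp Sobolev inequality (Lieb's constant, with the bubbles $U_{s,t}$ as extremals) reads
\begin{equation*}
F(s):=\int_{\mathbb{R}^N}(2\pi|\xi|)^{2s}|\mathcal{F}_0(u)|^2\,d\xi-S_{N,s}\|u\|_{2N/(N-2s)}^2\ \geq 0,
\qquad
S_{N,s}=2^{2s}\pi^{s}\frac{\Gamma\left(\frac{N+2s}{2}\right)}{\Gamma\left(\frac{N-2s}{2}\right)}\left(\frac{\Gamma\left(\frac{N}{2}\right)}{\Gamma(N)}\right)^{\frac{2s}{N}}.
\end{equation*}
At $s=0$ we have $S_{N,0}=1$, so Plancherel gives $F(0)=0$. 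Hence $F'(0^+)\ge 0$, provided the right derivative exists.

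Next I compute the three pieces of $F'(0^+)$.
\begin{itemize}
\item[(a)] Differentiating under the integral gives $\frac{d}{ds}\big|_{0}\int(2\pi|\xi|)^{2s}|\mathcal{F}_0 u|^2\,d\xi=2\ln(2\pi)+\int\ln|\xi|^2|\mathcal{F}_0 u|^2\,d\xi$. This is justified by $|\,a^{2s}-1\,|/s\le C\,|\ln a|\,(1+a^{2s_0})$ and $u\in C_c^\infty$.
\item[(b)] With $p(s)=\frac{2N}{N-2s}$, one has $p'(0)=\frac{4}{N}$. Using $\|u\|_2=1$, $\frac{d}{ds}\big|_{0}\big(\int|u|^{p}\big)^{2/p}=\frac{4}{N}\cdot\frac12\int|u|^2\ln|u|^2\,dx=\frac{4}{N}\int|u|^2\ln|u|\,dx$. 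This is exactly the entropy coefficient appearing in \eqref{Pitts:ineq}.
\item[(c)] From the digamma function, $\frac{d}{ds}\big|_0 S_{N,s}=2\ln2+\ln\pi+2\psi\left(\frac N2\right)+\frac{2}{N}\ln\frac{\Gamma(N/2)}{\Gamma(N)}$.
\end{itemize}
Then $F'(0^+)\ge0$ becomes $\frac4N\int|u|^2\ln|u|\le\int\ln|\xi|^2|\mathcal{F}_0u|^2\,d\xi+2\ln(2\pi)-S'_{N,0}$. The remaining task is to check that $2\ln(2\pi)-S'_{N,0}=-\frac{4G_N}{N}$ with $G_N$ as in the statement. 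The factors of $2$ and $\pi$ have to be tracked carefully here. If they do not cancel exactly, I will instead take the constant directly from Beckner's Theorem 3 and use the computation above only as a consistency check.

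For the passage to $\mathbb{H}_{\ln}(\mathbb{R}^N)$, I approximate $u$ by $u_k\in C_c^\infty$ with $\|u_k\|_2=1$, using mollification and cut-off. Along this sequence:
\begin{itemize}
\item $\int\ln|\xi|^2|\mathcal{F}_0u_k|^2\,d\xi$ converges, because the logarithmic Fourier weight is controlled at infinity by finiteness of the quadratic form in \eqref{reg:weak-solu}, and near $\xi=0$ by $\sup|\mathcal{F}_0u_k|\le\|u_k\|_1$, which is bounded through the weight $\ln(e+|x|^2)$.
\item The positive part of the entropy $\int|u_k|^2\ln|u_k|$ is handled by Fatou's lemma.
\item The negative part is handled by the moment bound $\int u^2\ln(e+|x|^2)<\infty$.
\end{itemize}

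The main obstacle is the characterization of the extremals. The derivative argument only yields the inequality; it does not show that equality forces $u=A(1+|x|^2)^{-N/2}$. For this I would rely on Beckner's equality statement in \cite[Theorem 3]{Beckner-1995}. Alternatively, one can pull back by stereographic projection, which turns \eqref{Pitts:ineq} into the conformal logarithmic Sobolev inequality on $\mathbb{S}^N$, and then apply the classification of Frank--K\"onig--Tang \cite{Frank-Konig-Tang-2020}. There the extremals are $\big(\sqrt{1-|\theta|^2}/(1-\theta\cdot\xi)\big)^{N/2}$, and pulled back to $\mathbb{R}^N$ these are exactly the functions $A(1+|x|^2)^{-N/2}$ up to translations and dilations, i.e.\ up to conformal automorphism.
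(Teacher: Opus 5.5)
Your route is correct in its main line but genuinely different from the paper's. The paper gives no derivation of this lemma: it simply quotes Beckner's Theorem~3 in \cite{Beckner-1995}, adapted to its notation, for the inequality, the constant and the equality cases together. You instead prove the inequality as the first-order expansion at $s=0$ of the sharp fractional Sobolev inequality. Your constant check closes exactly, so the fallback is not needed: $2\ln(2\pi)-S'_{N,0}=\ln\pi-2\psi\left(\frac{N}{2}\right)+\frac{2}{N}\ln\frac{\Gamma(N)}{\Gamma(N/2)}=-\frac{4G_N}{N}$.

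Your route gives an independent check of $G_N$, which enters $B_{N,\mathcal{L}}$ and hence all the main results. It also explains why $A(1+|x|^2)^{-N/2}$ is the natural extremal: it is the $s\to0^+$ limit of the Sobolev extremals $U_{s,t}$, the same mechanism the paper uses in Section~\ref{existence} to construct solutions. It does not give uniqueness of extremals, so for the ``moreover'' part you are in the same position as the paper, namely relying on a citation. If you go through Frank--K\"onig--Tang \cite{Frank-Konig-Tang-2020}, you must also reduce to a nonnegative optimizer. This works because the quadratic form of $\mathcal{L}_\Delta$ does not increase under $u\mapsto|u|$ while the entropy is unchanged. You then need to check that this optimizer solves the Euler--Lagrange equation in the class where their classification applies.

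One justification in your density step is false, although the step is easily repaired. The bound $\sup|\mathcal{F}_0u_k|\le\|u_k\|_1$ is not controlled through $\int u^2\ln(e+|x|^2)\,dx$, because that weighted $L^2$ condition does not imply $u\in L^1$. For instance, take $u(x)=|x|^{-N/2}(\ln|x|)^{-3/2}$ for $|x|>e$, smoothly modified near $|x|=e$. It lies in $\mathbb{H}_{\ln}(\mathbb{R}^N)$ but not in $L^1(\mathbb{R}^N)$, so $\|u_k\|_1\to\infty$ along any a.e.\ convergent approximation.

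You do not actually need convergence of the low-frequency part. The term $-\int_{|\xi|<1}\bigl|\ln|\xi|^2\bigr|\,|\mathcal{F}_0u_k|^2\,d\xi$ enters the right-hand side with a minus sign. Fatou's lemma, along a subsequence with $\mathcal{F}_0u_k\to\mathcal{F}_0u$ a.e., therefore already gives the needed $\limsup$ bound, just as Fatou handles the positive part of the entropy on the left.

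Note also that $\mathbb{H}_{\ln}(\mathbb{R}^N)$ does not include finiteness of $\int\ln|\xi|^2|\mathcal{F}_0u|^2\,d\xi$. If that integral is $+\infty$, the inequality is trivial. Otherwise you still have to show that the high-frequency part converges along your mollify-and-cut-off sequence. For the cut-off this uses the kernel representation of the quadratic form, not dominated convergence alone.
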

\begin{lemma}[Beckner's entropy inequality]\label{Beckner-ineq}
    Let $u \in \mathbb{H}_{\ln}(\mathbb{R}^N)$ such that $\|u\|_2 =1.$ Then, 
    \begin{equation}\label{LOg-HLS-ineq-2}
        \int_{\mathbb{R}^N}  \left(\ln \left(\frac{1}{|x|^4}\right) \ast u^2\right) u^2(x) ~dx \leq \frac{2C_N}{N} + \frac{8}{N} \int_{\mathbb{R}^N} u^2(x) \ln |u|~dx
    \end{equation}
    where $C_N$ is defined as $C_N = N \ln \pi + N\left(\psi(N) - \psi\!\left(\frac{N}{2}\right)\right) - 2\ln\!\left(\frac{\Gamma(N)}{\Gamma\!\left(\frac{N}{2}\right)}\right).$ Moreover, the extremal functions are given up to conformal automorphism by $u(x)= A \left(1+ |x|^2\right)^{\frac{-N}{2}}$ for some $A  \in \mathbb{R}^+.$
\end{lemma}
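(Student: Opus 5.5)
The statement to prove is Beckner's entropy inequality (Lemma \ref{Beckner-ineq}): for $\|u\|_2=1$,
\[
\int_{\mathbb{R}^N}\big(\ln|x|^{-4}\ast u^2\big)u^2\,dx\le \frac{2C_N}{N}+\frac{8}{N}\int_{\mathbb{R}^N}u^2\ln|u|\,dx ,
\]
with the bubble extremals.

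\textbf{Strategy.} Set $f=u^2$, a probability density, so the inequality reads
\[
-4\iint f(x)f(y)\ln|x-y|\,dx\,dy\le \frac{2C_N}{N}+\frac{4}{N}\int f\ln f .
\]
After dividing by $4$ this is the logarithmic HLS inequality of Carlen--Loss and Beckner:
\[
-\iint f(x)f(y)\ln|x-y|\le \frac1N\int f\ln f+C .
\]
I would obtain it as the $s\to0^+$ endpoint of the sharp HLS inequality \eqref{HLS-inequality}.

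\textbf{Step 1: rewrite HLS on $f$.} Apply \eqref{HLS-inequality} to $u$ and write it in terms of $f$:
\[
\Phi(s):=\iint f(x)f(y)|x-y|^{-4s}\le C_{N,s}\,\|f\|_{p_s}^2, \qquad p_s=\frac{N}{N-2s}.
\]
At $s=0$ both sides equal $1$. Indeed $C_{N,0}=\Gamma(N/2)\Gamma(N)^{-1}\big(\Gamma(N/2)/\Gamma(N)\big)^{-1}=1$ and $\|f\|_1=1$.

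\textbf{Step 2: differentiate at $s=0^+$.} Since equality holds at $s=0$, subtracting and dividing by $s$ gives $\Phi'(0)\le \frac{d}{ds}\big|_{0}\big[C_{N,s}\|f\|_{p_s}^2\big]$. The three derivatives needed are the following.
\begin{itemize}
\item $\Phi'(0)=-4\iint ff\ln|x-y|$.
\item $\frac{d}{ds}\big|_0\ln\|f\|_{p_s}^2=\frac{2}{N}\int f\ln f$, using $\frac{d}{dp}\big|_{p=1}\ln\|f\|_p=\int f\ln f$ and $p_s'(0)=\frac{2}{N}$.
\item $\frac{d}{ds}\big|_0\ln C_{N,s}=2\ln\pi-2\psi(N/2)+2\psi(N)+\frac4N\ln\big(\Gamma(N/2)/\Gamma(N)\big)$, computed with digamma functions exactly as in Lemma \ref{lem:constants}.
\end{itemize}
This yields
\[
-4\iint ff\ln|x-y|\le \frac{4}{N}\int f\ln f + K_N, \qquad K_N=\frac{d}{ds}\Big|_0\ln C_{N,s}.
\]
Multiplying the expression above by $N/2$ shows $K_N=2C_N/N$. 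Finally $\frac4N\int f\ln f=\frac8N\int u^2\ln|u|$, which is the claimed inequality.

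\textbf{Step 3: justify the limit.} The main obstacle is making the differentiation rigorous for $u\in\mathbb{H}_{\ln}$ with $\|u\|_2=1$.
\begin{itemize}
\item For the left side, write $(\Phi(s)-1)/s=\iint ff\,\frac{|x-y|^{-4s}-1}{s}$. On $\{|x-y|\ge1\}$ the integrand is monotone in $s$ and bounded by $4\ln|x-y|$, which is integrable thanks to the weight $\ln(e+|x|^2)$ and the Remark's bound on $\ln|x-y|$; monotone convergence applies there. On $\{|x-y|<1\}$ the integrand is nonnegative and increasing as $s\downarrow0$, so Fatou's lemma gives the needed direction $\liminf\ge\Phi'(0)$.
\item For $\|f\|_{p_s}$ to be finite one needs $u\in L^{2p_s}$. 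I would first prove the inequality for bounded, compactly supported $u$, where all differentiations are clean. Then approximate a general $u\in\mathbb{H}_{\ln}$ by truncations, using dominated convergence for the entropy term and the logarithmic weight for the interaction term.
\end{itemize}

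\textbf{Extremals.} Equality for $u=A(1+|x|^2)^{-N/2}$ (normalized, up to conformal maps) holds because these are the HLS extremals $U_{0,t}$ for every $s$, so the equality persists under differentiation. Uniqueness of the extremals does not follow from the limit. For that part I would invoke the characterization in \cite[Theorem~2]{Beckner-1993} (equivalently Carlen--Loss, via competing symmetries) rather than reprove it.
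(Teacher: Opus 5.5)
\textbf{Overall.} Your derivation of the inequality is correct, but it takes a different route from the paper. The paper gives no proof of Lemma~\ref{Beckner-ineq}: it recalls both the inequality and its extremals from \cite[Theorem~2]{Beckner-1993}. You obtain the inequality as the endpoint derivative at $s=0^+$ of the sharp HLS inequality \eqref{HLS-inequality}, in the spirit of Carlen--Loss and Beckner.

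\textbf{What your route buys.} The inequality becomes self-contained modulo \eqref{HLS-inequality}, which is already quoted in the appendix. It also gives an independent check of the constant: $\frac{d}{ds}\big|_{s=0}\ln C_{N,s}=2\ln\pi+2\psi(N)-2\psi(\frac N2)+\frac4N\ln\frac{\Gamma(N/2)}{\Gamma(N)}=\frac{2C_N}{N}$, which confirms the paper's $C_N$. What it cannot give is uniqueness of the extremals, which you, like the paper, have to import.

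\textbf{Small slips.}
\begin{enumerate}
\item In your second bullet the correct value is $\frac{d}{ds}\big|_0\ln\|f\|_{p_s}^2=2\cdot\frac2N\int f\ln f=\frac4N\int f\ln f$. You do use this correct value afterwards.
\item On $\{|x-y|<1\}$ the quotient $(|x-y|^{-4s}-1)/s$ \emph{decreases} to $-4\ln|x-y|$ as $s\downarrow0$. So it is pointwise bounded below by its limit, and the $\liminf$ bound is immediate.
\item In the truncation step, the near-field part of the interaction has no a priori integrable majorant. Use monotone convergence there ($u_n^2\uparrow u^2$ before normalizing), and keep the logarithmic weight for the far field.
\end{enumerate}

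\textbf{The step that is wrong: equality for the bubbles.} Your reason why the bubbles attain equality does not hold. The functions $U_{0,t}$ are not HLS extremals for $s>0$. By \eqref{HLS-inequality}, the extremals at order $s$ are $U_{s,t}$, with profile $(1+|x|^2)^{-(N-2s)/2}$. Write $\Phi_f(s)=\iint f(x)f(y)|x-y|^{-4s}\,dx\,dy$ and $f_0=U_{0,t}^2/\|U_{0,t}\|_2^2$. Then $\Phi_{f_0}(s)<C_{N,s}\|f_0\|_{p_s}^2$ strictly for small $s>0$, with equality only at $s=0$. That yields only ``$\le$'' between the derivatives, not equality, so nothing ``persists under differentiation''.

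\textbf{How to repair it.} Differentiate along the moving family $f_s=U_{s,t}^2/\|U_{s,t}\|_2^2$, for which $\Phi_{f_s}(s)=C_{N,s}\|f_s\|_{p_s}^2$ identically in $s$.
\begin{enumerate}
\item At $s=0$ both sides equal $(\int f)^2$ for every $f\ge0$, so their first variations in $f$ coincide.
\item These variations in fact vanish, since $\int\partial_s f_s=0$.
\item Differentiating the identity at $s=0$ therefore leaves $\partial_s\Phi_{f_0}(0)=\partial_s\big[C_{N,s}\|f_0\|_{p_s}^2\big]_{s=0}$. This is exactly equality in \eqref{LOg-HLS-ineq-2} for the bubble.
\end{enumerate}
The joint differentiability is routine here, because every $f_s$ is explicit, bounded, and decays like $|x|^{-2(N-2s)}$. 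Alternatively, verify equality by direct computation, or cite \cite[Theorem~2]{Beckner-1993} for the full equality statement, as the paper does.
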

\section*{Acknowledgment} The first author gratefully acknowledges the financial support of the Anusandhan National Research Foundation (ANRF), India, under Grant No. ANRF/ARGM/2025/000272/MTR. This work was initiated during the second author's research visit to the Department of Mathematical Sciences, IIT (BHU), Varanasi, in February 2026, and was progressed during the first author's research visit to LMAP, UPPA, in July 2026. The authors sincerely acknowledge the warm hospitality and support extended by both institutions during these visits.

\end{document}